\newcommand{\ba}{\boldsymbol\alpha}
\newcommand{\bb}{\boldsymbol\beta}
\newcommand{\xx}{\mathbf{x}}
\newcommand{\yy}{\mathbf{y}}
\newcommand{\TT}{\mathbb{T}}
\newcommand{\PP}{\mathcal{P}}
\newcommand{\Z}{\mathbb{Z}}
\newcommand{\fs}{\mathfrak{s}}
\newcommand{\R}{\mathbb{R}}
\newcommand{\C}{\mathbb{C}}
\newcommand{\spinc}{Spin\textsuperscript{c} }
\newcommand{\s}{\vskip.1in}
\newcommand{\n}{\noindent}
\newcommand{\bdry}{\partial}
\newcommand{\bdsym}{\boldsymbol}
\newcommand{\be}{\begin{enumerate}}
\newcommand{\ee}{\end{enumerate}}
\newtheorem{thm}{Theorem}[section]
\newtheorem{lemma}[thm]{Lemma}
\newtheorem{cor}[thm]{Corollary}
\theoremstyle{remark}
\newtheorem{rmk}[thm]{Remark}
\numberwithin{equation}{subsection}
\begin{document}

\title[An absolute grading on Heegaard Floer homology]{An absolute grading on Heegaard Floer homology by homotopy classes of oriented 2-plane fields}

\author{Vinicius Gripp}
\address{UC Berkeley, Berkeley, CA 94720\\USA}
\email{vinicius@math.berkeley.edu}

\author{Yang Huang}
\address{University of Southern California, Los Angeles, CA 90089\\USA}
\email{huangyan@usc.edu}

\begin{abstract}
For a closed oriented 3-manifold $Y$, we define an absolute grading on the Heegaard Floer homology groups of $Y$ by homotopy classes of oriented 2-plane fields. We show that this absolute grading refines the relative one and that it is compatible with the maps induced by cobordisms. We also prove that if $\xi$ is a contact structure on $Y$, then the grading of the contact invariant $c(\xi)$ is the homotopy class of $\xi$.
\end{abstract}

\maketitle

\section{Introduction}

For a closed oriented 3-manifold $Y$, Ozsv\'ath and Szab\'o~\cite{OSz2} defined a collection of invariants of $Y$,  the Heegaard Floer homology groups $HF^\circ(Y)$, where $HF^\circ(Y)$ denotes either $\widehat{HF}(Y)$, $HF^+(Y)$, $HF^-(Y)$, or $HF^\infty(Y)$. They showed that $HF^\circ(Y)$ splits into a direct sum by Spin$^c$ structures
\begin{equation*}
    HF^\circ(Y)=\bigoplus_{\mathfrak{s}\in\textrm{Spin}^c(Y)} HF^\circ(Y,\mathfrak{s}).
\end{equation*}
For each $\mathfrak{s}\in\textrm{Spin}^c(Y)$, they also defined a relative grading on $HF^\circ(Y,\mathfrak{s})$, that takes values in $\mathbb{Z}/d(c_1(\mathfrak{s}))$, where $d(c_1(\mathfrak{s}))$ is the divisibility of $c_1(\mathfrak{s})\in H^2(Y;\Z)$, i.e. $d(c_1(\mathfrak{s}))\Z=\langle c_1(\mathfrak{s}), H_2(Y)\rangle.$

Moreover given a 4-dimensional compact oriented cobordism $W:Y_0 \to Y_1$, i.e. $\bdry W=-Y_0 \cup Y_1$ as oriented manifolds, and given a Spin$^c$ structure $\mathfrak{t}$ on $W$, there is a natural map $F_{W,\mathfrak{t}}:HF^\circ(Y_0,\mathfrak{t}|_{Y_0}) \to HF^\circ(Y_1,\mathfrak{t}|_{Y_1})$  defined by Ozsv\'ath-Szab\'o~\cite{OSz3}.

It has been shown that Heegaard Floer homology is isomorphic to two other homology theories: Seiberg-Witten Floer homology~\cite{KM} and embedded contact homology (ECH)~\cite{Hut1,Hut3,Hut4}. For a proof of the existence of these isomorphisms, see~\cite{T,KLT,CGH}. It is known that both ECH~\cite{Hut2} and Seiberg-Witten Floer homology~\cite{KM} are absolutely graded by homotopy classes of oriented 2-plane fields, but no such absolute grading had been defined for Heegaard Floer homology. In this paper, we construct such an absolute grading for Heegaard Floer homology, which is compatible with the relative grading and cobordism maps discussed above.

We will now fix some notation that will be used in this paper. Let $(\Sigma,\bm{\alpha},\bm{\beta},z)$ be a Heegaard diagram of $Y$. Here $\Sigma$ is a genus $g$ surface, $\ba=(\alpha_1,\dots,\alpha_g)$ and $\bb=(\beta_1,\dots,\beta_g)$ are collections of disjoint circles on $\Sigma$ and the basepoint $z$ is a point on $\Sigma$ in the complement of $\alpha_1\cup\dots\cup\alpha_g\cup\beta_1\cup\dots\cup\beta_g$. We also require that $\ba$ and $\bb$ are linearly independent sets in $H_1(Y)$ and that $\alpha_i$ and $\beta_j$ intersect transversely for every $i$ and $j$. We consider the tori $\mathbb{T}_ \alpha=\alpha_1\times\dots\times\alpha_g$ and $\mathbb{T}_{\beta}=\beta_1\times\dots\times\beta_g$ in the symmetric product $\text{Sym}^g(\Sigma)$.
Recall that the Heegaard Floer chain complex $\widehat{CF}(Y)$ is the free abelian group generated by the intersection points $\mathbf{x}\in \mathbb{T}_\alpha \cap \mathbb{T}_\beta$.  If $\xx$ and $\yy$ are intersection points in the same \spinc structure, we denote by $\text{gr}(\xx,\yy)$ their relative grading, as defined in~\cite{OSz2}.

We denote by $\mathcal{P}(Y)$ the set of homotopy classes of oriented 2-plane fields on $Y$. Each homotopy class of oriented 2-plane fields belongs to a \spinc structure, as we will explain in Section~\ref{SecDef}. Therefore $\mathcal{P}(Y)$ splits by Spin$^c$ structures as
$$\mathcal{P}(Y)=\coprod_{\fs\in\text{Spin}^c(Y)}\mathcal{P}(Y,\fs).$$
It turns out that $\mathcal{P}(Y,\fs)$ is an affine space over $\Z/d(c_1(\mathfrak{s}))$. For each Spin$^c$ structure $\fs$, we will construct an absolute grading $\widetilde{\text{gr}}$ on $\widehat{CF}(Y,\fs)$ with values in $\mathcal{P}(Y,{\fs})$.

For a contact structure $\xi$ on $Y$, Ozsv\'ath-Szab\'o~\cite{OSz1} defined the contact invariant $c(\xi)\in\widehat{HF}(-Y)$.
In~\cite{OSz2}, Ozsv\'ath-Szab\'o showed that a Heegaard move induces an isomorphism on Heegaard Floer homology.

Consider a compact oriented cobordism $W:Y_0 \to Y_1$. Let $\xi_0$ and $\xi_1$ be oriented 2-plane fields on $Y_0$ and $Y_1$ respectively. We say that $\xi_0 \sim_W \xi_1$ if there exists an almost complex structure $J$ on $W$ such that $[\xi_0]=[TY_0 \cap J(TY_0)]$ and $[\xi_1]=[TY_1 \cap J(TY_1)]$ as homotopy classes of oriented 2-plane fields.

We can now state the main theorem of this paper.

\begin{thm} \label{mainThm}
For every Heegaard diagram $(\Sigma,\ba,\bb,z)$ of $Y$, there exists a canonical function $\widetilde{gr}: \TT_{\alpha}\cap\TT_{\beta} \to \mathcal{P}(Y)$ such that:
\begin{enumerate}
    \item[(a)]{If $\mathbf{x},\mathbf{y} \in\TT_{\alpha}\cap\TT_{\beta} $ are in the same Spin$^c$ structure $\fs$, then $\widetilde{\text{gr}}(\mathbf{x})$ and $\widetilde{\text{gr}}(\mathbf{y})$ belong to $\mathcal{P}(Y,{\fs})$ and $\widetilde{\text{gr}}(\mathbf{x})-\widetilde{\text{gr}}(\mathbf{y})=\text{gr}(\xx,\yy)\in\Z/d(c_1(\fs))$. In particular, $\widetilde{\text{gr}}$ extends to the set of homogeneous elements of $\widehat{CF}(Y)$.}
    \item[(b)]{Let $\xi$ be a contact structure on $Y$, and let $c(\xi) \in \widehat{HF}(-Y)$ be the contact invariant. Then $\widetilde{\text{gr}}(c(\xi))=[\xi]$ as homotopy classes of oriented 2-plane fields.}
    \item[(c)] This absolute grading is invariant under the isomorphisms induced by Heegaard moves and hence it induces an absolute grading on $\widehat{HF}(Y)$ which is independent of the Heegaard diagram.
    \item[(d)]{Let $W: Y_0 \to Y_1$ be a compact, oriented cobordism, and let $\mathfrak{t}$ be a Spin$^c$ structure on $W$. Then the induced map $F_{W,\mathfrak{t}}: \widehat{HF}(Y_0,\mathfrak{t}|_{Y_0}) \to \widehat{HF}(Y_1,\mathfrak{t}|_{Y_1})$ respects the grading in the sense that $\widetilde{\text{gr}}(\mathbf{x}) \sim_W \widetilde{\text{gr}}(\mathbf{y})$ for any homogeneous element $\mathbf{x} \in \widehat{HF}(Y_0,\mathfrak{t}|_{Y_0})$ and any $\mathbf{y} \in \widehat{HF}(Y_1,\mathfrak{t}|_{Y_1})$, which is a homogeneous summand of $F_{W,\mathfrak{t}}(\mathbf{x})$.}
\end{enumerate}
\end{thm}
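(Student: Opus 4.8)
The plan is to define $\widetilde{\text{gr}}$ geometrically from a Morse model of $Y$, verify it is canonical and refines the Spin$^c$ decomposition, and then establish (a)--(d) in turn, with (a) and well-definedness forming the technical core. A pointed Heegaard diagram $(\Sigma,\ba,\bb,z)$ determines a self-indexing Morse function $f\colon Y\to[0,3]$ with one index-$0$ and one index-$3$ critical point, $g$ of index $1$ and $g$ of index $2$, with $\Sigma=f^{-1}(3/2)$, the $\alpha_i$ the ascending spheres and the $\beta_j$ the descending spheres. An intersection point $\xx\in\TT_\alpha\cap\TT_\beta$ selects, for each index-$1$/index-$2$ pair that it matches, a gradient trajectory between them; together these give $g$ disjoint flow lines, and $z$ gives a flow line from the index-$0$ to the index-$3$ critical point. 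Cancelling the critical points in pairs along these flow lines (Smale's lemma, performed in a fixed local model) turns $\nabla f/|\nabla f|$ into a nowhere-zero vector field $v_\xx$, well defined up to homotopy because the modification is canonical and the space of compatible metrics is connected; I set $\widetilde{\text{gr}}(\xx)=v_\xx^\perp$ with its boundary orientation. A Pontryagin--Thom comparison identifies the Spin$^c$ structure of $v_\xx^\perp$ with $\fs_z(\xx)$.

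For (a): if $\xx,\yy$ lie in the same $\fs$, then $\widetilde{\text{gr}}(\xx)$ and $\widetilde{\text{gr}}(\yy)$ induce the same Spin$^c$ structure, hence agree over the $2$-skeleton up to homotopy, and so differ by an element of $\pi_3(S^2)\cong\Z$ reduced modulo the indeterminacy $d(c_1(\fs))$ coming from homotopies not fixing the $2$-skeleton; this is the affine structure on $\PP(Y,\fs)$. I would compute this difference by choosing $\phi\in\pi_2(\xx,\yy)$, using $\phi$ to build an explicit homotopy of unit vector fields on $[0,1]\times Y$ from $v_\yy$ to $v_\xx$, and identifying the signed zero count of a generic extending section (the obstruction to homotoping rel the $2$-skeleton) with $\mu(\phi)-2n_z(\phi)=\text{gr}(\xx,\yy)$. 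This is the same local index bookkeeping that underlies the corresponding statements in monopole and embedded contact homology, so I would adapt that computation or transport it across the known isomorphism. Part (a) in particular shows $\widetilde{\text{gr}}$ is well defined and injective on each Spin$^c$ summand and hence extends to homogeneous elements of $\widehat{CF}$.

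For (c) and (d): Heegaard isotopies and index-one/index-two stabilizations change neither $Y$ nor $f$ away from a ball nor the flow-line data, so $\widetilde{\text{gr}}$ is unchanged under them; the substance of (c) is handleslide invariance, which I treat with (d) since both factor through the holomorphic-triangle maps. For (d), decompose $W$ into one-, two-, and three-handle cobordisms. For a one- or three-handle, $F_{W,\mathfrak{t}}$ is, up to the standard identification, tensoring with a top or bottom generator, and $\widetilde{\text{gr}}(\xx)\sim_W\widetilde{\text{gr}}(\yy)$ follows from an explicit almost complex structure on the elementary cobordism together with the collar form of $v_\xx$ and $v_\yy$. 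The crux is the two-handle case: $F_{W,\mathfrak{t}}$ counts holomorphic triangles in a diagram $(\Sigma,\ba,\bb,\boldsymbol\gamma,z)$, and I would attach to a triangle class $\psi$ the corresponding Spin$^c$ $4$-manifold with corners, equip the two-handle cobordism with an almost complex structure $J$ compatible with $\psi$, and show that the plane fields $TY_i\cap J(TY_i)$ are $\widetilde{\text{gr}}$ of the input and output generators, using additivity of the relevant obstruction invariants under gluing along $Y_i$; handleslide invariance is the special case where $W$ is a product. I expect this to be the main obstacle: producing a genuine $J$ on the two-handle cobordism realizing the prescribed boundary plane fields, and matching them not only at the Spin$^c$ level but at the $\pi_3(S^2)$ level.

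For (b): take an open book $(B,\pi)$ supporting $\xi$ (Giroux) and the Honda--Kazez--Mati\'c Heegaard diagram adapted to it, in which $c(\xi)\in\widehat{HF}(-Y)$ is represented by a distinguished intersection point $\xx_\xi$. For this diagram one can choose $f$ so that, after the canonical cancellation, $v_{\xx_\xi}$ is homotopic to a vector field transverse to the pages (Reeb-like near $B$) whose orthogonal plane field is $\xi$; this is an explicit model computation near the binding and on a page, with the orientation-reversal convention for $-Y$ built in. Comparing with (a) yields $\widetilde{\text{gr}}(c(\xi))=[\xi]$. As in (d), the delicate point is the $3$-cell obstruction: verifying agreement in $\pi_3(S^2)$, for which I would again use additivity of $d_3$-type invariants and, if necessary, calibrate against one explicitly computed example.
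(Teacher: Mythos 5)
Your construction of $\widetilde{\text{gr}}$ and your plans for (b), (c) and (d) follow essentially the same route as the paper: the grading is defined by canonically deforming a gradient-like vector field to a nonvanishing one inside tubular neighborhoods of the flow lines selected by $\xx$ and $z$; part (b) is proved on the Honda--Kazez--Mati\'c diagram by homotoping that vector field to a Reeb field; part (d) is proved by building an explicit almost complex structure on the $4$-manifold $W_{\alpha,\beta,\gamma}$ associated to a Heegaard triple, with $1$- and $3$-handles handled by tensoring with the top generator $\Theta$; and handleslide invariance is deduced from the cobordism statement applied to the product cobordism. However, part (a) contains a genuine gap rather than omitted routine detail. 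The entire technical content of (a) is the identification of the difference of homotopy classes with $\mu(\phi)-2n_z(\phi)$, and you do not carry it out: asserting that it is ``the same local index bookkeeping'' as in monopole or embedded contact homology, or proposing to ``transport it across the known isomorphism,'' proves nothing about the Ozsv\'ath--Szab\'o relative grading, which is defined via Maslov indices of Whitney disks --- to use those isomorphisms you would still have to show that the transported grading refines $\text{gr}(\xx,\yy)$, which is exactly the computation being avoided, and it would make the construction depend on the deep equivalence theorems. The paper's proof of this step is a several-page argument: a trivialization of $TY$ adapted to the domain $D(A)$ of a nonnegative Whitney disk, the Pontryagin framed links $K_{\xx}=w_{\xx}^{-1}(e_1)$ and $K_{\yy}=w_{\yy}^{-1}(e_1)$, an immersed link cobordism assembled from the pieces $F_1\supset\dots\supset F_m$ of the domain, a correction lemma $\zeta_{S}-\zeta_{S'}=2(\delta(S)-\delta(S'))$ accounting for self-intersections of the immersed cobordism, and a term-by-term match with Lipshitz's formula $\mu(A)=\sum_l\bigl(e(F_l)+n_{\xx}(F_l)+n_{\yy}(F_l)\bigr)$, including the treatment of degenerate and auxiliary corners. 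None of that bookkeeping is visibly routine, and your proposal contains no substitute for it.

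A second, smaller gap: your claim that Heegaard isotopies ``change neither $f$ away from a ball nor the flow-line data'' fails for finger moves, which create or destroy pairs of intersection points. There the continuation map is not the identity on generators, and one must argue separately --- the paper exhibits an explicit index-$0$ disk with dynamic boundary conditions for the creating move, and for the cancelling move composes disks and invokes part (a) --- that the grading is preserved. This case does require an argument and is not covered by ``the flow-line data is unchanged.''
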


\begin{rmk}
Theorem~\ref{mainThm}(a) implies that we have the following decomposition by degrees.
\begin{equation}\label{eqdecomp}
 \widehat{CF}(Y;\fs)=\bigoplus_{\rho\in\mathcal{P}(Y,\fs)} \widehat{CF}_{\rho}(Y;\fs).
\end{equation}
Here $\widehat{CF}_{\rho}(Y;\fs)$ is the $\Z$-module generated by all $\xx\in\TT_{\ba}\cap\TT_{\bb}$ with $\widetilde{\text{gr}}(\xx)=\rho$.
\end{rmk}

\begin{rmk}\label{RmkAct}
The generators of $HF^{\infty}(Y)$ are of the form $[\xx,i]$, where $\xx\in\TT_{\alpha}\cap\TT_{\beta}$ and $i\in\Z$. We recall that $\Z$ acts on $\mathcal{P}(Y)$, since $\mathcal{P}(Y,\fs)$ is an affine space over $\Z/d(c_1(\fs))$. So we can define an absolute grading on $HF^{\infty}(Y)$, and hence on $HF^{-}(Y)$ and $HF^{+}(Y)$, by $\widetilde{\text{gr}}([\xx,i])=\widetilde{\text{gr}}(\xx)+2i$, for a homogeneous element $\xx$. It is easy to see that Theorem~\ref{mainThm} implies that (a),(c) and (d) also hold for $HF^{\infty}(Y)$, $HF^{-}(Y)$ and $HF^{+}(Y)$.
\end{rmk}

\begin{rmk}
Using the absolute grading function $\widetilde{\text{gr}}$ constructed in Theorem~\ref{mainThm}, one can recover the absolute $\mathbb{Q}$-grading for $HF^\circ(Y,\mathfrak{s})$ defined by Ozsv\'ath-Szab\'o when $c_1(\mathfrak{s}) \in H^2(Y;\Z)$ is a torsion class. See Corollary~\ref{ColQ} for details.
\end{rmk}

We can also generalize the absolute grading function $\widetilde{gr}$ to the twisted Heegaard Floer homology groups defined by Ozsv\'ath-Szab\'o~\cite{OSz5}. Recall that the twisted Heegaard Floer homology group $\underline{HF}(Y,\mathfrak{s})$ is the homology of the twisted Heegaard Floer chain complex $CF(Y;\mathfrak{s})\otimes\mathbb{Z}[H^1(Y;\mathbb{Z})]$, where the (infinity version) differential is defined by
\begin{equation*}
    \underline{\bdry}^\infty[\mathbf{x},i]=\sum_{\mathbf{y}\in\mathbb{T}_\alpha\cap\mathbb{T}_\beta} \Big(\sum_{\phi\in\pi_2(\mathbf{x},\mathbf{y})} \#\mathcal{M}(\phi)e^{A(\phi)}[\mathbf{y},i-n_z(\phi)] \Big)
\end{equation*}
where $A:\pi_2(\mathbf{x},\mathbf{y}) \to H^1(Y;\mathbb{Z})$ is a surjective, additive assignment. See~\cite{OSz5}  for more details. Now we define the twisted absolute grading function by simply ignoring the twisted coefficient as follows:
\begin{eqnarray} \label{twgr}
    \widetilde{gr}_{tw}: \mathbb{Z}[H^1(Y;\mathbb{Z})](\mathbb{T}_\alpha \cap \mathbb{T}_\beta) \to & \mathcal{P}(Y)\\
    e^{\xi}\mathbf{x} \mapsto &\widetilde{gr}(\mathbf{x}),\nonumber
\end{eqnarray}
where $\xi \in H^1(Y;\mathbb{Z})$ and we write $\mathbb{Z}[H^1(Y;\mathbb{Z})]$ multiplicatively.\footnote{The twisted absolute grading defined here does not refine the relative $\Z$-grading within each Spin$^c$ structure defined in \cite{OSz5}. A slightly more sophisticated construction of the twisted grading is needed to recover the relative $\Z$-grading. But since we do not need this refinement in this paper, we do not include the details here.} Using an obvious twisted version of Theorem~\ref{mainThm}(b), we will prove the following corollaries in Section~\ref{SecOnContCls}.

Let $\mathcal{F}_Y$ denote the set of homotopy classes (as 2-plane fields) of contact structures on $Y$ which are weakly fillable.

\begin{cor}[Kronheimer-Mrowka \cite{KM97}] \label{KM97}
$\mathcal{F}_Y$ is finite.
\end{cor}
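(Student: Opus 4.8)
The plan is to deduce finiteness of $\mathcal{F}_Y$ from the twisted version of Theorem~\ref{mainThm}(b) together with the nonvanishing of the (twisted) contact invariant for weakly fillable contact structures. The key external input is the theorem of Ozsv\'ath--Szab\'o that if $\xi$ is weakly symplectically fillable, then its twisted contact invariant $\underline{c}(\xi) \in \underline{\widehat{HF}}(-Y)$ (with coefficients in $\Z[H^1(Y;\Z)]$, or in an appropriate quotient) is nonzero. By the twisted analogue of Theorem~\ref{mainThm}(b), the element $\underline{c}(\xi)$ is homogeneous of degree $\widetilde{gr}_{tw}(\underline{c}(\xi)) = [\xi] \in \mathcal{P}(-Y)$ (or $\mathcal{P}(Y)$, after accounting for orientation reversal, which only relabels the grading set). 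Hence the map $\xi \mapsto [\xi]$ from $\mathcal{F}_Y$ into $\mathcal{P}(Y)$ lands in the subset of homotopy classes $\rho$ for which $\underline{\widehat{HF}}_{\rho}(-Y) \neq 0$.

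The next step is to observe that only finitely many homotopy classes $\rho \in \mathcal{P}(Y)$ support nonzero twisted Heegaard Floer homology. Indeed, $\mathcal{P}(Y)$ splits over $\textrm{Spin}^c(Y)$, and $\underline{\widehat{HF}}(-Y,\fs)$ is nonzero for only finitely many $\fs$ (it vanishes for all but finitely many $\fs$ since $\widehat{CF}$ is finitely generated). For each such $\fs$ with $c_1(\fs)$ torsion, $\mathcal{P}(Y,\fs)$ is a finite set (an affine space over $\Z/d(c_1(\fs)) = \Z$ is infinite, so here I must instead use that $\underline{\widehat{HF}}(-Y,\fs)$, being finitely generated over $\Z[H^1(Y;\Z)]$ and with the grading taking each generator to a single class, is supported in only finitely many degrees $\rho$). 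For $\fs$ with $c_1(\fs)$ non-torsion, $\mathcal{P}(Y,\fs)$ is a finite affine space over $\Z/d(c_1(\fs))$ with $d(c_1(\fs)) \neq 0$, hence automatically finite. In all cases, the set $\{\rho \in \mathcal{P}(Y) : \underline{\widehat{HF}}_{\rho}(-Y) \neq 0\}$ is finite, because it is contained in the image of the (finitely many) generators of the (finitely many) nonzero summands under $\widetilde{gr}_{tw}$.

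Finally, I must upgrade "the map $\xi \mapsto [\xi]$ has finite image" to "$\mathcal{F}_Y$ is finite," i.e. I need that this map is finite-to-one — equivalently, that only finitely many homotopy classes of 2-plane fields on $Y$ map to a given class. But $\mathcal{F}_Y$ is by definition a set of homotopy classes of 2-plane fields, so $\xi \mapsto [\xi]$ is the identity on $\mathcal{F}_Y \subseteq \mathcal{P}(Y)$, and we are simply asserting $\mathcal{F}_Y \subseteq \{\rho : \underline{\widehat{HF}}_{\rho}(-Y) \neq 0\}$, a finite set. The main obstacle is ensuring the twisted invariant is nonzero in the \emph{same} twisted theory whose homology I have just argued is finitely supported in $\mathcal{P}(Y)$: one must use the twisted theory over the full group ring $\Z[H^1(Y;\Z)]$ (or over $\Z[[H^1(Y;\Z)]]$ as in Ozsv\'ath--Szab\'o's non-vanishing result for weakly fillable contact structures) and verify that the degree-support argument of the previous paragraph still applies verbatim — since $\widetilde{gr}_{tw}$ ignores the twisted coefficient, it does, and the chain complex is still finitely generated over the base ring. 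I expect no further difficulty; the content is entirely the combination of the twisted (b) with Ozsv\'ath--Szab\'o's non-vanishing theorem.
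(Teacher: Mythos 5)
Your proposal is correct and follows essentially the same route as the paper: combine Ozsv\'ath--Szab\'o's non-vanishing of the twisted contact invariant for weakly fillable contact structures with the twisted grading statement (Corollary~\ref{twgrcontcls}) and the finite generation of $\widehat{\underline{HF}}(-Y)$ over $\mathbb{Z}[H^1(Y;\mathbb{Z})]$, so that only finitely many homotopy classes of 2-plane fields can carry a nonzero homogeneous class. The paper merely adds the untwisted version of the same argument for the strongly fillable case as a warm-up.
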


\begin{cor} \label{L-sp}
If $Y$ is an $L$-space, then $|\mathcal{F}_Y| \leq |H_1(Y;\mathbb{Z})|$.
\end{cor}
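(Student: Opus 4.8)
The plan is to combine the twisted version of Theorem~\ref{mainThm}(b) with the structural constraints on $\widehat{HF}$ of an $L$-space. Recall that $Y$ is an $L$-space means $\widehat{HF}(Y,\fs)\cong\Z$ for every $\fs\in\text{Spin}^c(Y)$, and since $Y$ is a rational homology sphere in this case, $\text{Spin}^c(Y)$ is in bijection with $H_1(Y;\Z)$, which has exactly $|H_1(Y;\Z)|$ elements; moreover $c_1(\fs)$ is torsion for every $\fs$, so each $\mathcal{P}(Y,\fs)$ is an affine space over $\Z=\Z/d(c_1(\fs))$. Note that an $L$-space is automatically a rational homology sphere, so the relevant Heegaard Floer group is taken with untwisted coefficients already; but since a weakly fillable contact structure is detected via the twisted contact invariant in general, I would first observe that for a rational homology sphere the twisted and untwisted theories agree (as $H^1(Y;\Z)=0$), so the twisted Theorem~\ref{mainThm}(b) reduces to the untwisted statement $\widetilde{\text{gr}}(c(\xi))=[\xi]$ in $\widehat{HF}(-Y)$.

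Next I would recall the nonvanishing theorem: if $\xi$ is weakly fillable, then $c(\xi)\neq 0$ in $\widehat{HF}(-Y)$ (Ozsv\'ath-Szab\'o, using the twisted refinement of Ghiggini-Honda-type arguments; for a rational homology sphere weak and strong fillability give the same conclusion $c(\xi)\neq 0$). Fix the \spinc structure $\fs=\fs_\xi$ determined by $\xi$. Then $c(\xi)$ is a nonzero homogeneous element of $\widehat{HF}(-Y,\fs)$, and by Theorem~\ref{mainThm}(b) its grading is exactly $[\xi]\in\mathcal{P}(-Y,\fs)$. The key point is now a counting argument: since $\widehat{HF}(-Y,\fs)\cong\Z$, it is supported in a \emph{single} $\widetilde{\text{gr}}$-degree — because by Theorem~\ref{mainThm}(a) distinct homotopy classes of 2-plane fields in $\mathcal{P}(-Y,\fs)$ correspond to distinct gradings, and a rank-one group cannot be split nontrivially across two different degrees while still containing the nonzero class $c(\xi)$ in a prescribed one. (More carefully: $c(\xi)$ lies in a summand of fixed degree; any other weakly fillable $\xi'$ with $\fs_{\xi'}=\fs$ also has $c(\xi')\neq 0$ in the same rank-one group, hence in the same degree summand, so $[\xi']=[\xi]$ in $\mathcal{P}(-Y,\fs)$, and therefore $[\xi']=[\xi]$ in $\mathcal{P}(Y,\fs)$ after reversing orientation.) Thus there is at most one homotopy class of weakly fillable contact structures in each \spinc structure on $Y$.

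Finally I would assemble the count: $\mathcal{F}_Y=\coprod_{\fs}\{[\xi]\in\mathcal{F}_Y:\fs_\xi=\fs\}$, and the previous step shows each fiber has at most one element, so $|\mathcal{F}_Y|\leq|\text{Spin}^c(Y)|=|H_1(Y;\Z)|$, which is the claim. I expect the main obstacle to be the careful handling of the nonvanishing of the contact invariant for \emph{weakly} (as opposed to strongly) fillable contact structures — strictly this is where one genuinely needs the twisted coefficients and the twisted Theorem~\ref{mainThm}(b), even though for a rational homology sphere the twisted group is abstractly isomorphic to the untwisted one; I would need to be careful that the twisted contact invariant's nonvanishing descends correctly and that the grading statement is compatible with this identification. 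A secondary, more routine point is justifying that the rank-one group is genuinely concentrated in one degree rather than, say, split as $\Z$ in one degree and $0$ elsewhere in a way that could still be consistent with two fillable classes — but this follows immediately from the fact that $c(\xi)$ and $c(\xi')$ are both nonzero in the same $\Z$ and hence land in the same unique nonzero graded piece.
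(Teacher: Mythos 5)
Your proposal is correct and follows essentially the same route as the paper: nonvanishing of the (twisted, hence for a rational homology sphere untwisted) contact invariant of a weakly fillable $\xi$, the grading identity $\widetilde{\text{gr}}(c(\xi))=[\xi]$, and the rank constraint coming from the $L$-space condition. The only cosmetic difference is that you count one admissible grading per \spinc structure using $\widehat{HF}(-Y,\fs)\cong\Z$, whereas the paper bounds the number of nonzero graded summands by the total rank $|H_1(Y;\Z)|$ of the free module $\widehat{\underline{HF}}(-Y)$; these give the same bound.
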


\begin{cor}[Lisca \cite{Lis}] \label{Lis}
If $Y$ admits a metric of constant positive curvature, then $|\mathcal{F}_Y| \leq |H_1(Y;\mathbb{Z})|$.
\end{cor}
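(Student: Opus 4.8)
The plan is to deduce this from Corollary~\ref{L-sp} by exhibiting the relevant manifolds as $L$-spaces. If $Y$ admits a metric of constant positive sectional curvature, then $Y$ is a spherical space form: its universal cover is $S^3$ and $Y = S^3/\Gamma$ for a finite subgroup $\Gamma \subset SO(4)$ acting freely. In particular $\pi_1(Y) = \Gamma$ is finite, so $H_1(Y;\Z)$ is finite and $b_1(Y) = 0$; moreover $\widetilde{Y} = S^3$, so $Y$ is an elliptic manifold. The key input is that all such manifolds are $L$-spaces for Heegaard Floer homology: this follows from the fact (due to Ozsv\'ath--Szab\'o, building on the classification of Seifert-fibered spaces with finite $\pi_1$) that spherical space forms carry no taut foliations, equivalently that their Heegaard Floer homology is "as simple as possible," i.e.\ $\mathrm{rk}\,\widehat{HF}(Y) = |H_1(Y;\Z)|$. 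Alternatively one invokes the Elliptization Theorem (Perelman) plus the computation of $HF^+$ of Seifert-fibered rational homology spheres.

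Granting that $Y$ is an $L$-space, Corollary~\ref{L-sp} immediately gives $|\mathcal{F}_Y| \le |H_1(Y;\Z)|$, which is the assertion. So the body of the proof is really just the two-line reduction: (i) recall that constant positive curvature forces $Y$ to be a quotient of $S^3$ by a finite group of isometries acting freely; (ii) cite that such elliptic $3$-manifolds are $L$-spaces; (iii) apply Corollary~\ref{L-sp}. One should also remark that $\mathcal{F}_Y$ is a priori a subset of the finite set $\mathcal{P}(Y)$ (since $b_1(Y)=0$, each $\mathcal{P}(Y,\fs)$ is finite and there are finitely many \spinc structures), so the statement is just a sharpening of Corollary~\ref{KM97} in this case.

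The only subtlety — and the step I would be most careful about — is the citation that spherical space forms are $L$-spaces: one must make sure this is stated in the literature in the exact form needed (every \spinc structure has $\widehat{HF}$ of rank one, equivalently $\mathrm{rk}\,\widehat{HF}(Y) = |H_1(Y;\Z)|$). This is standard and appears in Ozsv\'ath--Szab\'o's work on plumbed manifolds and on absolutely graded Floer homology, but since the whole point of Corollary~\ref{Lis} is to recover Lisca's theorem from our framework, I would phrase it as: combine the well-known $L$-space property of elliptic manifolds with Corollary~\ref{L-sp}. No genuinely hard analysis enters here; the work was already done in Corollary~\ref{L-sp} (which in turn rests on the twisted version of Theorem~\ref{mainThm}(b) and the finiteness of \spinc structures on a rational homology sphere).
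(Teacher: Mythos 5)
Your proposal is correct and matches the paper's argument exactly: the paper's proof is the one-line observation that, by Ozsv\'ath--Szab\'o, a manifold admitting a metric of constant positive curvature is an $L$-space, after which Corollary~\ref{L-sp} applies. Your additional discussion of spherical space forms and the provenance of the $L$-space property is accurate but not needed beyond the citation.
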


\begin{rmk}
Corollary~\ref{KM97} and Corollary~\ref{Lis} are previously proved using the relationship between Seiberg-Witten theory and contact topology. 
\end{rmk}

\begin{rmk}
In fact the assertion in Corollary~\ref{KM97} holds for the set of homotopy classes of 2-plane fields which support a tight contact structure by the work of Colin-Giroux-Honda \cite{CGirH}. But our result does not imply this generalization. In particular we do not have an upper bound on $|\mathcal{F}(Y)|$ for tight contact structures.
\end{rmk}

The paper is organized as follows. In Section 2, we construct the absolute grading on $\widehat{CF}$, which refines the relative grading defined in~\cite{OSz2}. That proves part (a) of the Theorem.
In Section 3, we compute the absolute grading of the contact invariant and show that it is the homotopy class of the contact structure, which proves part (b) of the Theorem. This fact is known, by construction, for the absolute grading in ECH~\cite{Hut2}. In Section 4, we prove part (d) at the chain level, showing that $\widetilde{\text{gr}}$ is natural under cobordism maps, as stated in Theorem~\ref{CobThm}. This was shown for Seiberg-Witten Floer homology by Kronheimer-Mrowka~\cite{KM}. In Section~5, we prove that $\widetilde{\text{gr}}$ is preserved under Heegaard moves, see Theorem \ref{inv}. That means that the decomposition (\ref{eqdecomp}) is preserved under Heegaard moves and therefore it also holds in the homology level. That implies that part (d) also holds in homology.\\

\noindent
{\em Acknowledgements}. We would like to thank Ko Honda and Michael Hutchings for suggesting this problem to us and for providing guidance throughout the course of this project. We also thank Tye Lidman for pointing out applications of the absolute grading of the contact invariant to us. This work started during our visit to the Mathematical Sciences Research Institute in 2009-2010, where an excellent environment for math research was provided. The first author was partially supported by NSF grant DMS-0806037.

\section{The absolute grading}\label{SecDef}

Let $Y$ be an oriented closed 3-manifold and let $\PP(Y)$ denote the set of homotopy classes of oriented 2-plane fields on $Y$. Let us first recall that there is a surjection  $\psi:\PP(Y)\rightarrow\text{Spin}^c(Y)$. Also, for a fixed \spinc structure $\fs$, we can endow $\psi^{-1}(\fs)=\PP(Y,\fs)$ with the structure of an affine space over $\Z/d(c_1(\fs))$, where $d(c_1(\fs))$ is the divisibility of the first Chern class of $\fs$. So, given $\xi,\eta\in\PP(Y)$ mapping to the same \spinc structure $\mathfrak{s}$, there is a well-defined difference $\xi-\eta$. One way of seeing this affine space structure is by using the Pontryagin-Thom construction, as follows. Each $\xi\in\PP(Y)$ corresponds to a unique homotopy class of nonvanishing vector fields, which we denote by $[v_{\xi}]$. Fixing a representative $v_{\xi}$ and a trivialization of $TY$, and after a normalization, we can think of $v_{\xi}$ as a map $Y\rightarrow S^2$. The preimage of a regular value of this map gives a link and the preimage of the tangent plane to this regular point under the derivative map determines a framing of this link. We recall that two framed links $L_O, L_1\subset Y$ are called framed cobordant, if there exists a framed surface $S\subset Y\times[0,1]$, whose boundary is $-L_O\times\{0\}\cup L_1\times \{1\}$ and such that the framing restricted to the boundary coincides with the initial framings on $L_0$ and $L_1$. It follows from Pontryagin-Thom theory that two nonvanishing vector fields are homotopic if and only if the respective framed links are framed cobordant. If $\xi,\eta$ map to the same \spinc structure, then the respective links are cobordant and the difference of framings is $\xi-\eta\in\Z/d(c_1(\fs))$. The sign convention we are using here is that a left-handed twist increases a framing by $+1$.

Now let $(\Sigma,\ba,\bb,z)$ be a Heegaard diagram representing $Y$, where $\ba=(\alpha_1,\dots,\alpha_g)$ and $\bb=(\beta_1,\dots,\beta_g)$. Recall that the generators of $\widehat{CF}(Y)$ are the intersection points of the tori $\TT_{\alpha}$ and $\TT_{\beta}$ in  $\text{Sym}^g(\Sigma)$. Our goal in this section is to construct a canonical map $ \TT_{\alpha}\cap\TT_{\beta}\rightarrow \PP(Y)$ that refines the relative grading, which we denote by \emph{gr}, and the map that assigns a \spinc structure to a generator, which we denote by $\fs_z:\TT_{\alpha}\cap\TT_{\beta}\rightarrow\text{Spin}^c(Y)$. For the definitions of these maps, see~\cite{OSz2}.

\begin{thm}\label{ThmRel}
There is a canonical map $\widetilde{\text{gr}}:\TT_{\alpha}\cap\TT_{\beta}\rightarrow \PP(Y)$, such that if $\xx,\yy\in\TT_{\ba}\cap\TT_{\bb}$ are such that $\fs_z(\xx)=\fs_z(\yy)=\fs$, then $$\widetilde{\text{gr}}(\xx)-\widetilde{\text{gr}}(\yy)=\text{gr}(\xx,\yy)\in \Z/d(c_1(\fs)).$$
\end{thm}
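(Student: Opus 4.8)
The plan is to construct $\widetilde{\text{gr}}(\xx)$ directly from the Heegaard data by building an explicit oriented $2$-plane field (equivalently, a nonvanishing vector field) on $Y$ attached to each intersection point $\xx\in\TT_\alpha\cap\TT_\beta$, and then to check that the difference of two such classes, when the Spin$^c$ structures agree, reproduces the Ozsv\'ath--Szab\'o relative grading. Recall that $Y$ is obtained from $\Sigma\times[0,1]$ by attaching $2$-handles along the $\alpha_i\times\{0\}$ and along the $\beta_j\times\{1\}$, and then capping off with two $3$-handles. On $\Sigma\times[0,1]$ one has the obvious vector field $\partial_t$ pointing in the $[0,1]$ direction, whose associated $2$-plane field is $T\Sigma$. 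An intersection point $\xx=(x_1,\dots,x_g)$, with $x_i\in\alpha_i\cap\beta_{\sigma(i)}$, records exactly the combinatorial data needed to extend $\partial_t$ over the two handlebodies: near each $\alpha$- and $\beta$-curve one modifies the vector field inside a neighborhood so that it extends across the corresponding $2$-handle, and the intersection points $x_i$ tell us how to match the modifications coming from the two sides. First I would fix such a model: a nonvanishing vector field $v_\xx$ on $Y$, well-defined up to homotopy, using a Morse function $f$ adapted to the Heegaard diagram whose index-one and index-two critical points correspond to the $\alpha$- and $\beta$-curves, and using the gradient-like vector field together with the $g$ flowlines through the $x_i$ to cancel the zeros in the middle level. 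This is essentially the construction already in the literature relating Spin$^c$ structures to intersection points (Turaev, Ozsv\'ath--Szab\'o), upgraded from the Spin$^c$ level to the level of vector fields: the point is that choosing the intersection point, not merely its Spin$^c$ class, pins down the homotopy class of the nonvanishing vector field up to the residual $H^3(Y)=\Z$ ambiguity, which we normalize away by a fixed reference choice (e.g. demanding a specific behavior near $z$).

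The second step is to verify the map is canonical, i.e. independent of the auxiliary choices (the adapted Morse function, the gradient-like vector field, the local models near the curves). Here I would argue that any two choices are connected by a one-parameter family, which sweeps out a framed cobordism between the associated framed links under Pontryagin--Thom, so the homotopy class $[v_\xx]$ is unchanged; the normalization near $z$ removes the global $\Z$-shift. The third and main step is the grading comparison. Given $\xx,\yy$ with $\fs_z(\xx)=\fs_z(\yy)=\fs$, and a Whitney disk $\phi\in\pi_2(\xx,\yy)$, the relative grading is $\text{gr}(\xx,\yy)=\mu(\phi)-2n_z(\phi)$, defined mod $d(c_1(\fs))$. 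I would compute $\widetilde{\text{gr}}(\xx)-\widetilde{\text{gr}}(\yy)$ by comparing the two vector fields $v_\xx$ and $v_\yy$: they agree outside a controlled region, and the domain of $\phi$ gives an explicit homology/framing interpolation between them. Concretely, the framed link for $v_\xx$ differs from that for $v_\yy$ by a framed cobordism whose "framing defect" I claim equals $\mu(\phi)-2n_z(\phi)$; the $n_z(\phi)$ term enters precisely through the normalization at $z$ (each time the disk crosses $z$ the framing near the basepoint twists), and the Maslov index $\mu(\phi)$ accounts for the rotation of the $2$-plane field as one pushes $\xx$ to $\yy$ across the domain. The cleanest way to see the $\mu(\phi)$ contribution is to use the additivity of both sides under juxtaposition of disks and reduce to the case of a small disk supported near a single intersection point of a single pair $\alpha_i,\beta_{\sigma(i)}$ (a "local" bigon or the annular domains $\phi$ with $\mu=2n_z$ coming from periodic classes), where the computation is an explicit planar model of twisting a framed unknot.

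The main obstacle I anticipate is the grading comparison in its full generality, specifically isolating the Maslov-index contribution: matching the analytically/homologically defined $\mu(\phi)$ with the topologically defined change in framing of the Pontryagin--Thom link requires a careful local-to-global argument, because $\mu(\phi)$ is most naturally computed in $\mathrm{Sym}^g(\Sigma)$ while the vector fields live on $Y$. I would handle this by passing from the symmetric product to $\Sigma$ itself via the standard reduction (choosing a disk with a "nice" domain, or using the point-counting/index formula of Lipshitz), so that $\mu(\phi)$ becomes a sum of local Euler-characteristic and corner contributions of the domain, and then matching each such contribution to a local twist in the framing model near the corresponding arc of $\alpha\cup\beta$. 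A secondary technical point is the well-definedness mod $d(c_1(\fs))$: one must check that periodic domains (closed surfaces in $Y$) change the framing by a multiple of $d(c_1(\fs))$, which follows because the relevant framing defect of a periodic class $P$ is $\langle c_1(\fs),[P]\rangle$ up to sign, and these generate $d(c_1(\fs))\Z$ — this is the same computation that shows $\mathcal{P}(Y,\fs)$ is an affine space over $\Z/d(c_1(\fs))$, and it guarantees the comparison statement makes sense in the target group.
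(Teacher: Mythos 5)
Your construction of $\widetilde{\text{gr}}$ is the paper's: modify a gradient-like vector field of a compatible Morse function in tubular neighborhoods of the flow lines $\gamma_{x_i}$ and of the flow line through $z$, and take the homotopy class of the resulting nonvanishing field; the comparison with the relative grading is likewise carried out via Pontryagin--Thom and Lipshitz's formula $\mu(A)=e(D(A))+n_{\xx}(D(A))+n_{\yy}(D(A))$. So the strategy matches. However, the main step of your plan has a genuine gap. The reduction ``by additivity under juxtaposition of disks to a small bigon supported near a single intersection point'' does not go through: two generators in the same Spin$^c$ structure are in general not connected by a chain of elementary bigons through intermediate generators, and the domain of an arbitrary Whitney disk has multiplicities $>1$, concave corners, degenerate corners ($x_i=y_j$, possibly interior to the domain), and auxiliary corners where the two boundary arcs cross --- none of which appears in a local bigon model. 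The paper instead computes directly for an arbitrary nonnegative domain, decomposed into nested subsurfaces $F_1\supset\dots\supset F_m$, and the bookkeeping for these features is where all the work is.

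Concretely, two devices are missing from your outline and are needed to complete the local-to-global matching even after invoking Lipshitz's formula. First, the framed links $w_{\xx}^{-1}(e_1)$ and $w_{\yy}^{-1}(e_1)$ must be compared in a trivialization of $TY$ chosen along the $2$-complex swept out by the domain together with the stable/unstable surfaces of the critical points; in that trivialization the tangential frame vector $E_1$ is given an extra $2\pi\chi(F)$ rotation at a distinguished corner, and this is precisely where the Euler-measure term $e(F)$ enters the framing count (convex corners contribute $0$, concave corners $\pm1$). Your proposal never produces the $\chi(F)$ term. Second, when multiplicities exceed $1$, or corners are degenerate or interior, or the domain contains $z$, the link cobordism one builds from the domain is only \emph{immersed}, and one needs the lemma that the induced framings of two immersed cobordisms in the same relative homology class differ by twice the difference of their signed self-intersection numbers. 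In particular the $-2n_z$ term does not come from ``a twist of the normalization at $z$'': it arises as $-2$ times the signed intersection of the (unchanged) link near $\gamma_z$ with the surfaces $F_l$, via that self-intersection lemma. (Your worry about well-definedness mod $d(c_1(\fs))$ is legitimate but takes care of itself: $\mathcal{P}(Y,\fs)$ is an affine $\Z/d$-space and $\text{gr}$ is already well-defined mod $d$, so once the identity is verified for one choice of disk it holds for all.)
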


\subsection{The construction}\label{Defgr}

We fix a self-indexing Morse function $f:Y\rightarrow \R$ compatible with $(\Sigma,\ba,\bb)$. Let $\xx\in\TT_{\alpha}\cap\TT_{\beta}$. Then $\xx$ corresponds to $g$ points $x_1,\dots,x_g$ on $\Sigma$, which give rise to flow lines $\gamma_{x_1},\dots,\gamma_{x_g}$ connecting the index 1 critical points to the index 2 critical points. The basepoint $z$ determines a flow line $\gamma_0$ from the index 0 critical point to the index 3 critical point. We can choose a gradient-like vector field $v$, tubular neighborhoods $N(\gamma_{x_i})$ of $\gamma_{x_i}$ and diffeomorphisms $N(\gamma_{x_i})\cong B^3$ such that, under these diffeomorphisms,
$v_{\vert N(\gamma_{x_i})}:B^3\rightarrow \R^3$ is given by $v(x,y,z)=(x,-y,1-2z^2)$, for $i\neq 0$ and $v_{\vert N(\gamma_0)}:B^3\rightarrow \R^3$ is given by
$v(x,y,z)=(2xz,2yz,1-2z^2)$. Figure \ref{nbhd}(a) shows two cross-sections of $v_{\vert N(\gamma_{x_i})}$, for $i\neq0$. Figure \ref{nbhd}(b) shows $v_{\vert N(\gamma_0)}$ on any plane passing through the origin containing the $z$-axis.
Outside the union of the neighborhoods $N(\gamma_{x_i})$, $v$ is a nonvanishing vector field. We will define a nonvanishing continuous vector field $w_{\xx}$ on $Y$ that coincides with $v$ in the complement of the neighborhoods $N(\gamma_{x_i})$.

\begin{figure}[ht]
    \begin{overpic}[scale=.33]{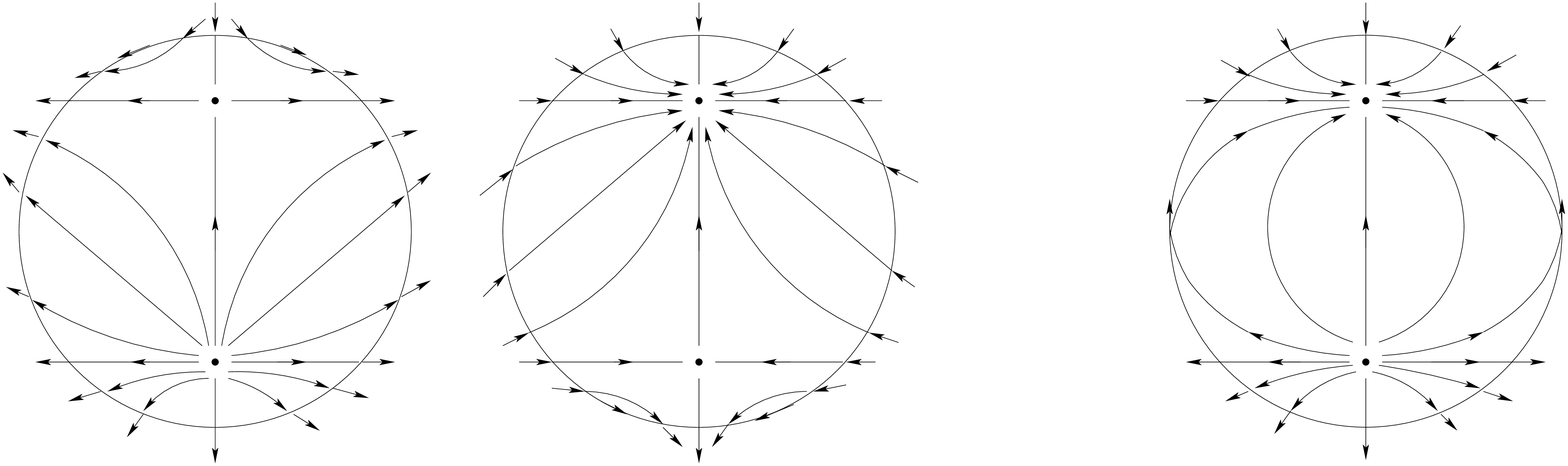}
    \put(10,-3){$xz$-plane}
    \put(40,-3){$yz$-plane}
    \put(27.5,-7){(a)}
    \put(86,-7){(b)}
    \end{overpic}
    \newline
    \newline
    \caption{}
    \label{nbhd}
\end{figure}

For $i\neq 0$, on $\partial N(\gamma_{x_i})\cong \partial B^3$, we note that
$$v(x,y,z)=(x,-y,1-2z^2)=(x,-y,2x^2+2y^2-1).$$
We define $w_{\xx}=(x,-y,2x^2+2y^2-1)$ in $N(\gamma_i)$, see Fig \ref{defn}(a).
This is a nonzero vector field in $N(\gamma_{x_i})$ that coincides with $v$ on $\partial N(\gamma_{x_i})$.
Also, on $\partial N(\gamma_0)$, we see that
$$v(x,y,z)=(-2xz,-2yz,1-2z^2)=(-2xz,-2yz,2x^2+2y^2-1).$$
This new vector field is still zero on the circle $C=\{(x,y,z)|x^2+y^2=1/2,z=0\}$. A vertical section of it in $B^3$ is shown in Figure \ref{defn}(b).So we define $w_{\xx}$ in $N(\gamma_0)$ by $$w_{\xx}(x,y,z)=(-2xz,-2yz,2x^2+2y^2-1)+\phi(x,y,z)(y,-x,0),$$ where $\phi$ is a bump function around $C$ (i.e. $\phi=1$ on $C$ and $\phi=0$ in the complement of a small neighborhood of $C$).
Therefore $w_{\xx}$ is a nonvanishing vector field on $Y$ that equals $v$ outside the union of the neighborhoods $N(\gamma_{x_i})$. We can perturb $w_{\xx}$ to a smooth vector field. Finally we define $\widetilde{\text{gr}}(\xx)$ to be the homotopy class of the orthogonal complement of $w_{\xx}$.

\begin{figure}[ht]
    \begin{overpic}[scale=.34]{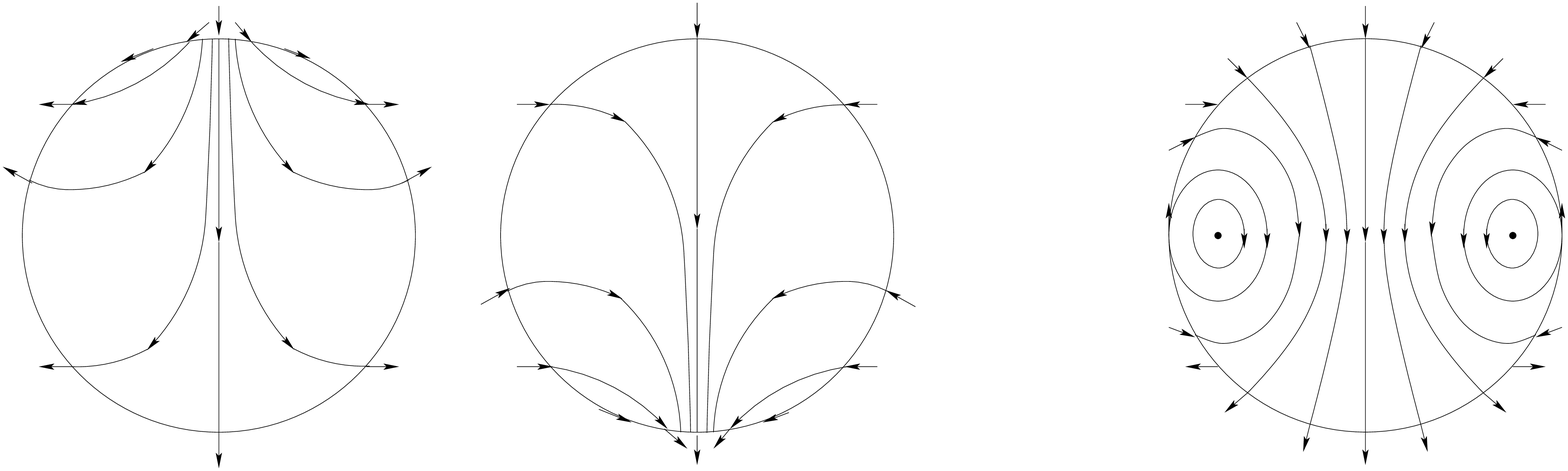}
    \put(10,-3){$xz$-plane}
    \put(40,-3){$yz$-plane}
    \put(27.5,-7){(a)}
    \put(86,-7){(b)}
    \end{overpic}
    \newline
    \newline
    \caption{}
    \label{defn}
\end{figure}

\begin{rmk}
We could use the gradient vector field itself instead of some other gradient-like vector field to define the absolute grading, but it would be harder to write down the formulas for the canonical modification of the gradient vector field in the neighborhoods of the flow lines. Nevertheless, we would obtain the same homotopy class.
\end{rmk}

\subsection{The relative grading}

This subsection is dedicated to proving that the absolute grading refines the relative grading. Given two intersection points $\xx,\yy\in\TT_{\ba}\cap\TT_{\bb}$ such that $\fs_z(\xx)=\fs_z(\yy)$, there exists a Whitney disk $A\in \pi_2(x,y)$, as proven in \cite{OSz2}. This means that $A$ is a homotopy class of maps $\varphi:D^2\subset \C\rightarrow\text{Sym}^g(\Sigma)$ taking $i$ to $\xx$, $-i$ to $\yy$, the semicircle with positive real part to $\TT_{\beta}$ and the one with negative real part to $\TT_{\alpha}$. Let $D_1,\dots,D_n$ denote the closures of the connected components of $\Sigma-\alpha_1-\dots-\alpha_g-\beta_1-\dots-\beta_g$. We write $D(A)=\sum_{k=1}^n a_k D_k$, where $a_k$ is the multiplicity of $\varphi$ on each $D_k$. We can choose a Whitney disk $A$ so that $a_k \ge 0$ for every $k$.

We will now construct surfaces $F_1\supset\dots\supset F_m$, whose union projects to $\sum_{k=1}^n a_k D_k=D(A)$ on $\Sigma$. We take $a_k$ copies of each $D_k$ and we glue them along their boundaries in the following way: we construct $F_1$ by gluing one copy of each $D_k$ with $a_k>0$. Then we construct $F_2$ by gluing one copy of each $D_k$ such that $a_k-1>0$. Inductively we construct surfaces $F_1,\dots,F_m$, where $m=\max{a_k}$. So the union of the surfaces $F_l$ can be identified with $D(A)$.
(Similar constructions can be found in~\cite{OSz2,Lip,R}).

The Euler measure of a surface with corners $S$, denoted by $e(S)$, is defined to be $\chi(S)-\frac{p}{4}+\frac{q}{4}$, where $p$ is the number of convex corners of $S$ and $q$ is the number of concave corners of $S$.
If $w\in\alpha_i\cap\beta_j$, for some $i,j$, then a small neighborhood of $w$, when intersected with the complement of the union of the $\alpha$ and the $\beta$ curves, gives rise to four regions. We define $n_{w}(D_k)$ to be $1/4$ times the number of those regions contained in $D_k$. We extend $n_w$ linearly to the $\Z$-module generated by the domains $D_k$. Now we define $n_{\xx}$ to be the sum of all $n_{x_i}$, for $i=1,\dots,g$. For example, a convex corner $x_i$ of $F_l$ contributes to $n_{\xx}(F_l)$ with $1/4$ and a concave corner $x_i$ with $3/4$. Similarly we define $n_{\yy}$.
By Lipshitz~\cite{Lip}, the Maslov index of the Whitney disk $A$, denoted by $\mu(A)$, is given by $$\mu(A)=\text{ind}(A)=e(D(A))+n_{\xx}(D(A))+n_{\yy}(D(A))=\sum_{l=1}^m \Big(e(F_l)+n_{\xx}(F_l)+n_{\yy}(F_l)\Big).$$
For each $D_k$, we define $n_z(D_k)$ to be $0$ if $z\not\in D_k$ and $1$ if $z\in D_k$, and we extend $n_z$ linearly to sums of $D_k$. The relative grading was defined by Ozsv\'{a}th-Szab\'{o}~\cite{OSz2} to be $$\text{gr}(\xx,\yy)=\mu(A)-2n_z(D(A))\in\Z/d,$$ where $d$ is the divisibility of $c_1(\fs(\xx))$. So we need to show that $$\widetilde{\text{gr}}(\xx)-\widetilde{\text{gr}}(\yy)= \sum_{l=1}^m \Big(e(F_l)+n_{\xx}(F_l)+n_{\yy}(F_l)-2n_z(F_l)\Big)\in\Z/d.$$

\textit{Step 1}: We first assume that $m=1$ and that $n_{z}(F_1)=0$. Recall that a corner $x_i$ is called degenerate if $x_i=y_j$ for some $j$. We also assume that there are no degenerate corners.

We will now choose a convenient trivialization of $TY$ in order to apply the Pontryagin-Thom construction. Let $f$ be a self-indexing Morse function $f$, which is compatible with $(\Sigma,\ba,\bb)$. Let $F:=F_1$.
Let $p_i$ be the index 1 critical point corresponding to $\alpha_i$ and $q_j$ the index 2 critical point corresponding to $\beta_j$. Each edge of the boundary of $F$ is part of an $\alpha_i$ or a $\beta_j$. So each edge of $\partial F$ determines a surface by flowing downwards or upwards towards a $p_i$ or $q_j$, respectively, and, by adding $p_i$ and $q_j$, we get a compact surface with corners. This surface has typically three corners unless it corresponds to an edge starting at a boundary degenerate corner in which case, this edge is actually a circle and the surface corresponding to it is a disk. We call $A_i$ and $B_j$ the surfaces corresponding to the edges contained in $\alpha_i$ and $\beta_j$, respectively. We note that the flow we consider here is the one generated by a gradient-like vector field $v$ compatible with the Morse function $f$.

Let $C$ be the union of $F$ and the surfaces $A_i$ and $B_j$. We will first choose a trivialization of $TY$ on $C$.
We start by defining a unit vector field $E_1$, which is tangent to $F$. The orientation of $\Sigma$ induces an orientation on $F$. We set $E_1$ to be the positive unit tangent vector along $\partial F$, with respect to its boundary orientation, outside a small neighborhood of the corners. At a neighborhood of a corner, we define $E_1$ on $\partial F$ by keeping it tangent to $F$ and rotating it by the smallest possible angle. That means that once we start rotating, $E_1$ will not be tangent to $\partial F$ at any point. In other words, each connected component of the set of points of $\partial F$ at which $E_1$ is not tangent to $\partial F$ contains exactly one corner of $F$.
We also have to choose a corner to rotate an extra $2\pi\chi(F)$ clockwise. That allows us to extend $E_1$ to $F$. We now define $E_1$ on each $A_i$ and $B_j$ to be an extension of $E_1$ on $\partial F$ such that it is tangent to $A_i$ and $B_j$ everywhere outside small neighborhoods of the corners $x_i$ and $y_j$ and such that it is always transverse to the flow lines $\gamma_{x_i}$ and $\gamma_{y_j}$. In particular $E_1$ is tangent to $A_i$ near $p_i$ and to $B_j$ near $q_j$. Near the corners $x_i$ and $y_j$, we require $E_1$ to never be tangent to $A_i$ and $B_j$, similarly to how we defined $E_1$ on $F$.
%We note that $E_1$ is well defined on $\partial C$ up to homotopy through nonzero vector fields and once we fix it at $\partial C$, it is well-defined up to homotopy relative to the boundary.
We define $E_3$ on $F$ to be the positive normal vector field to $F$, and we extend it to $A_i$ and $B_j$ so that $\{E_1,E_3\}$ is an oriented orthonormal frame on the respective tangent spaces, except maybe outside a small neighborhood of $\partial F$. In this neighborhood, we require that each connected component of the set of points where $E_3$ is not tangent to $A_i$ or $B_j$ intersects $F$.
%It is clear that $E_3$ is well-defined up to homotopy as before.
Now we take $E_2$ to be the unit vector field on $C$ orthogonal to $E_1$ and $E_3$ such that $\{E_1,E_2,E_3\}$ is an oriented basis of $TY$.
So mapping $E_i$ to $e_i\in \R^3$, we get a trivialization of $TY$ along $C$. We extend this trivialization to a neighborhood of $C$ in such a way that $E_1$ and $E_3$ are still tangent to the corresponding unstable and stable surfaces near the critical points $p_i$ and $q_j$ and that $e_1$ is a regular value of $w_{\xx}$ and $w_{\yy}$ when seen as maps $Y\rightarrow S^2$. Now, since there are no degenerate points, $C$ does not contain an $\alpha$ or $\beta$ curve. Therefore there is no obstruction to extending this trivialization to all of $Y$. So we choose one of those extensions.

Now we define $K_{\xx}'=w_{\xx}^{-1}(e_1)$ and $K_{\yy}'=w_{\yy}^{-1}(e_1)$ as framed links. We note that inside neighborhoods of the flow lines $\gamma_{x_i}$ and $\gamma_{y_i}$, these are one stranded braids contained in the corresponding unstable or stable surface, except that near each corner of $F$, this braid rotates around the respective flow line as much as $E_1$ restricted to this flow line does, but in the opposite direction. This is shown in Figure \ref{cobord}(a). It follows from the way that we chose the trivialization on $C$ that $K_{\xx}'$ and $K_{\yy}'$ do not intersect $C$ outside of those neighborhoods.

We can isotope $K_{\xx}'$ in neighborhoods of each $\gamma_{x_i}$ in the following way. Near each corner, this link is rotating around $\gamma_{x_i}$. We isotope a neighborhood of this part of the link to the segment of the flow line about which it is rotating fixing the endpoints. Outside of this neighborhood of the corner, but still inside the neighborhood of the flow line, the link is contained in the corresponding unstable or stable surface. We will call this new link $K_{\xx}$. We can think of the framing of a link as a unit normal vector field to the link. So the framing on $K_{\xx}$ induced from this isotopy can be seen by a vector field that is normal to the stable and unstable surfaces away from the corners and rotates with respect to the stable surface as much as $K'_{\xx}$ rotates about the flow line, as seen in Figure \ref{cobord}(b). We denote this framing by $\tau_{\xx}$. We note that once we fix which of the two unit normal vector fields to the stable surface we choose, the unit normal vector field to the unstable surface is determined.

We can do the same for $K_{\yy}'$ and define $K_{\yy}$ with framing denoted by $\eta_{\yy}$. Figure \ref{cobord}(c) shows a picture of both $K_{\xx}$ and $K_{\yy}$ at a neighborhood of a flow line $\gamma_{x_i}$. Now we modify $C$ in the following way. For each edge of $F$, we substitute the corresponding $A_i$ or $B_j$ by the region on the unstable or stable surface bounded by the corresponding edge of $F$ and the segments of $K_{\xx}$ and $K_{\yy}$, see Figure \ref{cobord}(c). We smooth the edges of this surface and denote by $\tilde{C}$ this smooth surface with boundary, which has cusps. We note that $\tilde{C}$ gives rise to a cobordism $S\subset Y\times[0,1]$ between $K_{\xx}\times\{0\}$ and $K_{\yy}\times\{1\}$ that is trivial where $K_{\xx}$ and $K_{\yy}$ coincide.

\begin{center}\begin{figure}[ht]
    \begin{overpic}[scale=0.4]{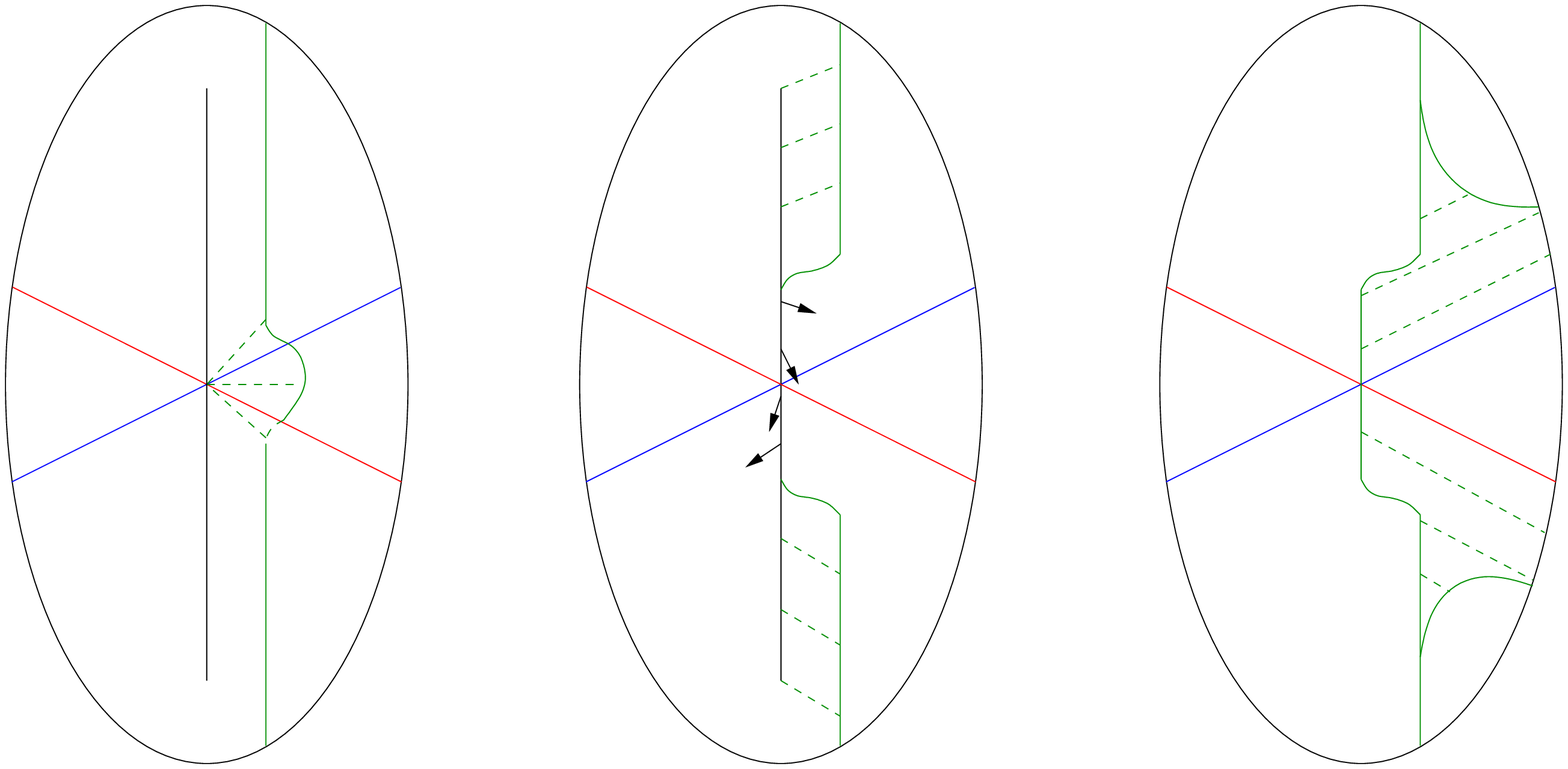}
    \put(8,10){$\gamma_{x_i}$}
    \put(17,10){$K'_{\xx}$}
    \put(21,30){\color{blue}$\beta$}
    \put(21,21){\color{red}$\alpha$}
    \put(45,10){$\gamma_{x_i}$}
    \put(54,10){$K_{\xx}$}
    \put(58,30){\color{blue}$\beta$}
    \put(58,21){\color{red}$\alpha$}
    \put(82,18){$K_{\xx}$}
    \put(92,8){$K_{\yy}$}
    \put(92,39){$K_{\yy}$}
    \put(11,-6){(a)}
    \put(47,-6){(b)}
    \put(85,-6){(c)}
    \end{overpic}
    \vspace{0.5cm}
    \caption{}
    \label{cobord}
\end{figure}\end{center}

If we are given a link cobordism between two links and a framing of one, then it induces a framing of the other. So $\tau_{\xx}$ induces a framing $\tau_{\yy}$ of $K_{\yy}$. The Pontryagin-Thom construction tells us that $\widetilde{\text{gr}}(\xx)-\widetilde{\text{gr}}(\yy)$ equals $\tau_{\yy}-\eta_{\yy}$. We will now compute this difference. Since $K_{\xx}$ and $K_{\yy}$ coincide as framed links outside of $\tilde{C}$, we only need to do this calculation in a neighborhood of $\tilde{C}$. To do so, we take a normal vector field $N$ to $\tilde{C}$ and extend it arbitrarily to $K_{\xx}\cap K_{\yy}$. So $N$ gives rise to a framing of $S$, which we call $\nu$. We denote by $\nu_{\xx}$ and $\nu_{\yy}$ the restrictions of $\nu$ to $K_{\xx}$ and $K_{\yy}$, resp. We will compute the difference between the framings by first comparing them with $\nu$ and then using the fact that
$$\tau_{\yy}-\eta_{\yy}=(\tau_{\yy}-\nu_{\yy})-(\eta_{\yy}-\nu_{\yy})=(\tau_{\xx}-\nu_{\xx})-(\eta_{\yy}-\nu_{\yy}).$$

\begin{center}\begin{figure}[ht]
    \begin{overpic}[scale=0.5]{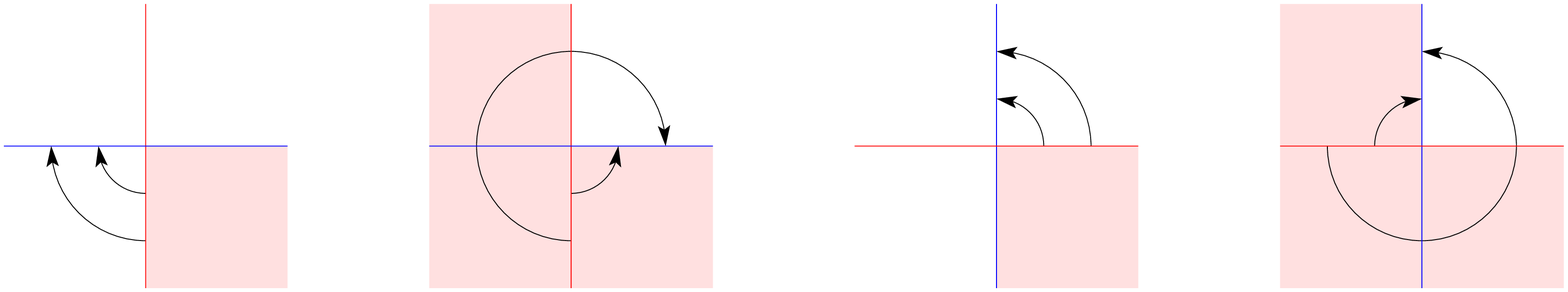}
    \end{overpic}
    \put(-442,25){$\tau_{\xx}$}
    \put(-461,25){$\nu_{\xx}$}
    \put(-287,25){$\tau_{\xx}$}
    \put(-320,25){$\nu_{\xx}$}
    \put(-160,55){$\eta_{\yy}$}
    \put(-145,55){$\nu_{\yy}$}
    \put(-64,55){$\eta_{\yy}$}
    \put(-30,55){$\nu_{\yy}$}
    \put(-445,-13){convex $x_i$}
    \put(-325,-13){concave $x_i$}
    \put(-185,-13){convex $y_j$}
    \put(-65,-13){concave $y_j$}
    \put(-395,33){\color{blue}$\beta$}
    \put(-425,5){\color{red}$\alpha$}
    \put(-267,33){\color{blue}$\beta$}
    \put(-297,5){\color{red}$\alpha$}
    \put(-138,33){\color{red}$\alpha$}
    \put(-168,5){\color{blue}$\beta$}
    \put(-10,33){\color{red}$\alpha$}
    \put(-40,5){\color{blue}$\beta$}
    \caption{}
    \label{diff}
\end{figure}\end{center}

We will look at a neighborhood of the corners of $F$. In fact we only need to compute how many times $\tau_{\xx}$ rotates with respect to $\nu_{\xx}$, where $K_{\xx}$ coincides with each $\gamma_{x_i}$ and similarly for $\eta_{\yy}$.
We call a nondegenerate corner of $F$ {\em convex}\footnote{Some authors use the adjectives acute and obtuse to denote convex and concave, respectively.} if it is a corner of some $D_k\subset F$ for only one $k$ and {\em concave}\footnotemark[1] if it is a corner of some $D_k\subset F$ for three values of $k$.
For convex vertices, the difference is 0 for both an $x_i$ and a $y_j$. For concave vertices, it is $+1$ for an $x_i$ and $-1$ for a $y_j$, as shown in Figure \ref{diff}. In this picture, the orientation of the link is pointing down, so a counterclockwise turn counts as a $+1$, since that is a left-handed twist. At the distinguished corner, we rotated $E_1$ by an additional $2\pi\chi(F)$ clockwise. If this is an $x_i$ it accounts for $\chi(F)$ in $\tau_{\xx}-\nu_{\xx}$ and if it is a $y_j$, it accounts for $-\chi(F)$ in $\eta_{\yy}-\nu_{\yy}$. So $\tau_{\yy}-\eta_{\yy}=\chi(F)+q$, where $q$ is the number of concave corners.

Now if we denote by $p$ the number of convex corners, by Lipshitz's formula,
\begin{eqnarray*}\text{ind}(F)&=&e(F)+n_{\xx}(F)+n_{\yy}(F)\\&=&\chi(F)-\tfrac{1}{4}p+\tfrac{1}{4}q+\tfrac{1}{4}p+\tfrac{3}{4}q\\&=&\chi(F)+q=\tau_{\yy}-\eta_{\yy}.\end{eqnarray*}
Since $n_{z}(F)=0$, we conclude that $\widetilde{\text{gr}}(\xx)-\widetilde{\text{gr}}(\yy)=\tau_{\yy}-\eta_{\yy}=\mu(A)=\text{gr}(\xx,\yy)$.

\textit{Step 2}: We will now prove a technical lemma that will be useful in the general case.

Given two links $K_1$ and $K_2$ in $Y$ that belong to the same homology class, let $S$ be an immersed cobordism between them. That means that $S$ is an immersed oriented compact surface in $Y\times [0,1]$ that is embedded near its boundary and such that $\partial S=K_1\times\{1\}\cup(-K_2)\times\{0\}$. Since an immersed surface also has a normal bundle, we can ask whether framings of $K_1$ and $K_2$ extend to a framing of $S$. So given a framing of $K_1$, the surface $S$ induces a framing of $K_2$. The induced framing of $K_2$ depends heavily on $S$. In fact, if we denote the signed number of self-intersections of $S$ by $\delta(S)$, we have the following lemma.
Here we orient $Y\times [0,1]$ by declaring that $\{\partial_t,E_1,E_2,E_3\}$ is an oriented basis, where $\{E_1,E_2,E_3\}$ is an oriented basis for $TY$ and $t$ is the coordinate function on $[0,1]$.

\begin{lemma}\label{l1}
 Let $K_1$ and $K_2$ be links in
$Y$ that belong to the same homology class and let $S$ and $S'$ be immersed cobordisms between them, which are in the same relative homology class. Given a framing of $K_1$, let $\zeta_S$ and $\zeta_{S'}$ be the framings induced on $K_2$ by $S$ and $S'$, respectively. Then $\zeta_{S}-\zeta_{S'}=2(\delta(S)-\delta(S'))$.
\end{lemma}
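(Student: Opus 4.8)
\noindent
The plan is to re-express both sides of the identity in terms of relative Euler numbers of normal bundles and then apply the Whitney self-intersection formula inside $Y\times[0,1]$. Fix once and for all an auxiliary framing $\lambda$ of $K_2$, and write $\mu$ for the given framing of $K_1$. For an immersed cobordism $T$ from $K_1$ to $K_2$, let $e(N_T;\mu,\lambda)\in\Z$ denote the relative Euler number of the oriented rank-$2$ normal bundle $N_T$ of $T$ in $Y\times[0,1]$ with respect to the boundary section $(\mu,\lambda)$, i.e. the obstruction to extending $(\mu,\lambda)$ to a nonvanishing section of $N_T$. By the very definition of the induced framing, $(\mu,\zeta_T)$ does extend to a nonvanishing section of $N_T$, so $e(N_T;\mu,\zeta_T)=0$; and since altering the framing on the boundary component $K_2$ by $k$ full twists changes the relative Euler number by $\epsilon k$ for a universal sign $\epsilon=\pm1$ fixed by the chosen orientations, we obtain $e(N_T;\mu,\lambda)=\epsilon(\lambda-\zeta_T)$. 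Applying this with $T=S$ and $T=S'$ and subtracting gives $\zeta_S-\zeta_{S'}=\epsilon\bigl(e(N_{S'};\mu,\lambda)-e(N_S;\mu,\lambda)\bigr)$.

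The next step is to compute that difference via self-intersections. With the boundary framing $(\mu,\lambda)$ fixed, push $T$ off itself along a generic section of $N_T$ restricting to $(\mu,\lambda)$ on $\partial T$, and let $Q(T)\in\Z$ be the resulting algebraic count of intersection points of $T$ with its push-off. Near $\partial T$ the push-off is disjoint from $T$ because the section is nonvanishing there, so the count receives a contribution $e(N_T;\mu,\lambda)$ from the interior zeros of the section and a contribution $2\delta(T)$ from the double points of $T$ — each transverse double point of sign $\pm$ contributing two intersection points of that sign. This is the relative Whitney formula $Q(T)=e(N_T;\mu,\lambda)+2\delta(T)$. On the other hand, with the boundary framing fixed, $Q(T)$ is a relative self-intersection number and so depends only on the class of $T$ in $H_2\bigl(Y\times[0,1],\,(K_1\times\{1\})\cup(K_2\times\{0\});\Z\bigr)$: the difference $Q(S)-Q(S')$ changes by the null-homologous closed $2$-cycle $S-S'$ supported in the interior, whose self-intersection and whose intersection with $S$ both vanish, since $S$ and $S'$ lie in the same relative homology class (here one also uses that $Y\times[0,1]$ is homotopy equivalent to the $3$-manifold $Y$, so its intersection form on $H_2$ vanishes). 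Hence $Q(S)=Q(S')$, giving $e(N_S;\mu,\lambda)+2\delta(S)=e(N_{S'};\mu,\lambda)+2\delta(S')$, and combined with the previous paragraph, $\zeta_S-\zeta_{S'}=2\epsilon\bigl(\delta(S)-\delta(S')\bigr)$.

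It then remains only to check that $\epsilon=+1$ for the conventions in force here, namely the orientation of $Y\times[0,1]$ declared just above and the convention that a left-handed twist raises a framing by $+1$. I would verify this by hand in a single local model — a small ball in $Y\times[0,1]$ in which $S$ consists of one flat sheet transverse to a second sheet that contributes one positive double point (so $\delta$ jumps by $1$), compared against the corresponding embedded model, writing down the standard section of the normal bundle and reading off the resulting change of induced boundary framing. I expect this sign bookkeeping — reconciling the framed-link convention used throughout the paper with the orientation of the product $Y\times[0,1]$ — to be the only genuinely delicate point; everything else is standard $4$-manifold topology (the Whitney formula and homological invariance of relative self-intersection), made especially clean here because $H_2(Y\times[0,1];\Z)$ carries the trivial intersection form.
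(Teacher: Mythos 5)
Your argument is correct, but it is organized differently from the paper's. The paper avoids relative Euler numbers altogether by a doubling trick: it glues two copies of $Y\times[0,1]$ (one with reversed orientation) into $Y\times S^1$, glues $S$ to $-S'$ into a closed immersed surface $\Sigma$, observes that the two induced framings of $K_2$ become precisely the relative Euler class of $N_\Sigma$ over the gluing region $K_2\times[-\varepsilon,\varepsilon]$, so that $e(N_\Sigma)=\zeta_{S'}-\zeta_S$, and then quotes the closed-surface formula $[\Sigma]\cdot[\Sigma]=e(N_\Sigma)+2\delta(\Sigma)$ of Lemma~\ref{l2} together with $[\Sigma]=[K_1\times S^1]$ (which is where the hypothesis that $S$ and $S'$ are relatively homologous enters) and $[K_1\times S^1]^2=0$. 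You instead stay inside $Y\times[0,1]$, introduce an auxiliary framing $\lambda$ of $K_2$, and run a relative version of the same two ingredients: the identity $e(N_T;\mu,\lambda)=\epsilon(\lambda-\zeta_T)$ and the relative Whitney formula $Q(T)=e(N_T;\mu,\lambda)+2\delta(T)$, with $Q(S)=Q(S')$ following from the vanishing of the relevant intersection pairings on $H_2$ of a product. The two routes trade the same work back and forth: the doubling lets the paper cite a standard closed-surface statement and makes the difference of framings appear as an honest Euler number with no auxiliary choices, while your relative formulation is more self-contained (no construction of $Y\times S^1$, no identification of $[\Sigma]$) but obliges you to set up the relative Whitney formula and to fix the universal sign $\epsilon$. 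That sign is the one point you leave unfinished; it does need to be pinned down, since the lemma has no free sign and feeds into the grading formula, and your proposed local-model check is the right way to do it --- though note that the paper's own proof is no more explicit about the analogous sign in its assertion $e(N_\Sigma)=\zeta_{S'}-\zeta_S$.
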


To prove that, we will use another lemma, which is a standard result in Differential Topology.

\begin{lemma}\label{l2}
 Let $\Sigma$ be a closed oriented surface immersed into a closed oriented 4-manifold $X$. Let $e(N_{\Sigma})$ be the Euler class ot the normal bundle of $\Sigma$ with the orientation induced by the orientation of $X$. Then
$$[\Sigma]\cdot [\Sigma]=e(N_{\Sigma})+2\delta(\Sigma).$$
\end{lemma}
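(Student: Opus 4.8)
The plan is to compute the self-intersection number $[\Sigma]\cdot[\Sigma]$ by choosing a well-controlled homologous push-off of $\Sigma$ and counting its signed intersections with $\Sigma$. First I would recall the embedded case: if $\Sigma$ is embedded in $X$, the tubular neighborhood theorem identifies a neighborhood of $\Sigma$ with the total space of $N_\Sigma$, and the self-intersection number of the zero section of a rank-$2$ oriented vector bundle over a closed oriented surface equals its Euler number, i.e. the signed count of zeros of a generic section. This gives $[\Sigma]\cdot[\Sigma]=e(N_\Sigma)$ whenever $\delta(\Sigma)=0$, which is the content of the lemma in that case.

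For the general immersed case, after a small generic perturbation I may assume that all self-intersections of $\Sigma$ are transverse double points. Fix a generic smooth section $s$ of the abstract normal bundle $N_\Sigma\to\Sigma$ with finitely many nondegenerate zeros whose signed count is $e(N_\Sigma)$. Applying the exponential map along each local sheet, $s$ determines an immersed surface $\Sigma'\subset X$ which is isotopic, hence homologous, to $\Sigma$, and which is transverse to $\Sigma$. I would then enumerate the points of $\Sigma\cap\Sigma'$. Away from the double points of $\Sigma$, the sheet $\Sigma'$ runs parallel to $\Sigma$ and meets it exactly at the images of the zeros of $s$, contributing $e(N_\Sigma)$ with signs. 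Near a double point $p$ at which local sheets $A$ and $B$ of $\Sigma$ cross with intersection sign $\epsilon(p)=\pm 1$, the pushed-off sheet $A'$ meets $B$ in a single nearby point and $B'$ meets $A$ in a single nearby point; a direct computation in local coordinates (take $A=\R^2\times 0$, $B=0\times\R^2$ in $\R^4$, and push off by constant small normal vectors) shows that each of these two intersection points carries sign $\epsilon(p)$. Hence $p$ contributes $2\epsilon(p)$, and summing over all double points contributes $2\delta(\Sigma)$. Adding the two contributions yields $[\Sigma]\cdot[\Sigma]=e(N_\Sigma)+2\delta(\Sigma)$.

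The one point that genuinely needs care is the orientation bookkeeping near a double point: with the orientation of $X$ fixed and the normal orientations of the sheets induced consistently, one must verify that the two extra intersection points produced by the push-off near $p$ each have sign exactly $\epsilon(p)$, rather than cancelling or appearing with an overall sign flip. Once this local model computation is done, the rest is a routine application of the tubular neighborhood theorem together with the standard description of the Euler number of $N_\Sigma$ as the signed count of zeros of a generic section, so I would keep those parts brief. (Alternatively, one can resolve each double point by an oriented smoothing to reduce to the embedded case, at the cost of tracking how the normal Euler number changes under resolution; I would mention this as a remark but carry out the push-off argument as the main proof, since the sign analysis there is the most transparent.)
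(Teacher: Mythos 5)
The paper does not actually prove Lemma~\ref{l2}; it is cited as ``a standard result in Differential Topology'' and used only as input to Lemma~\ref{l1}. Your argument is the standard push-off proof and is correct: the decomposition of $\Sigma\cap\Sigma'$ into zeros of the normal section (contributing $e(N_\Sigma)$) and pairs of points near each double point (contributing $2\delta(\Sigma)$) is exactly right, and the sign point you flag works out because in a 4-manifold the two surface sheets have even codimension, so swapping the order of $A$ and $B$ in the local model does not change the intersection sign --- hence both new points near a double point $p$ carry sign $\epsilon(p)$ rather than cancelling. Since the paper supplies no proof, there is nothing to compare against; your write-up would serve as a complete justification of the lemma as used.
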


\begin{proof}[Proof of Lemma \ref{l1}]
 We are given $S,S'\subset Y\times [0,1]$ such that $\partial S'=\partial S=K_1\times\{1\}\cup(-K_2\times\{0\})$ and such that $S'-S$ vanishes in $H_2(Y\times[0,1])$.
Now we take two copies of $Y\times [0,1]$, switch the orientation of one of them and glue along their common boundaries. We can think of this as $Y\times[-1,1]$ with the obvious identification of $Y\times\{-1\}$ and $Y\times\{1\}$, which gives us $Y\times S^1$. We can also glue $S\subset Y\times [0,1]$ to $-S'\subset Y\times[-1,0]$ and we get a closed surface that we call $\Sigma$. Now we can assume that in $Y\times[-\varepsilon,\varepsilon]$, the surface $\Sigma$ is $K_2\times[-\varepsilon,\varepsilon]$, for $\varepsilon$ small. We use $S$ to get a framing on $K_2\subset Y\times\{\varepsilon\}$ and $S'$ to get a framing on $K_2\subset Y\times\{-\varepsilon\}$. These are exactly $\zeta_S$ and $\zeta_{S'}$, respectively. It follows that the relative Euler class of the normal bundle of $\Sigma$ restricted to $K_2\times[-\varepsilon,\varepsilon]$ given these two framings is $\zeta_{S'}-\zeta_{S}$. Therefore $e(N_{\Sigma})=\zeta_{S'}-\zeta_{S}$.
Now, if we think of $S$, $S'$ and $\Sigma$ as chains in $Y\times S^1$, we can write $\Sigma=S-S'$. So $\Sigma-(K_1\times S^1)$ vanishes in $H_2(Y\times S^1)$. Hence $$[\Sigma]\cdot[\Sigma]=[K_1\times S^1]\cdot[K_1\times S^1]=0.$$ Therefore, by Lemma~\ref{l2},
$$\zeta_{S}-\zeta_{S'}=2\delta(\Sigma)=2(\delta(S)-\delta(S')).$$
\end{proof}

%\begin{proof}[Proof of Lemma \ref{l2}]
%We choose a section $\phi$ of $N_{\Sigma}$ whose zeroes are away from the self-intersections of $\Sigma$. Then we exponentiate $t\phi$ for some small $t$ to get an immersed surface $\Sigma'$. First we see that the zeroes of $\phi$ give rise to intersections of $\Sigma$ and $\Sigma'$ with the same sign. Then given a self-intersection point of $\Sigma$, a small neighborhood of it in $X$ consists of two embedded pieces $D_1$ and $D_2$ intersecting only at this point. Assuming that $t$ is small enough, $\text{exp}_{D_1}(t\phi)$ will intersect $D_2$ at a very close but different point and $\text{exp}_{D_2}(t\phi)$ will intersect $D_1$ also at a different and unique point. It is also clear that the sign of these two intersections are equal to the sign of the original self-intersection point. Moreover there are no other intersections of $\Sigma$ and $\Sigma'$. Therefore $$[\Sigma]\cdot[\Sigma]=\Sigma\cdot \Sigma'=e(N_{\Sigma})+2\delta(\Sigma).$$
%\end{proof}

\textit{Step 3}: We now proceed to the general case.
We had written $D(\varphi)$ as a union of surfaces $F_l\subset\Sigma$, which can be seen as 2-chains in $\Sigma$. We need to show that $$\widetilde{\text{gr}}(\xx)-\widetilde{\text{gr}}(\yy)= \sum_{l=1}^m\Big( e(F_l)+n_{\xx}(F_l)+n_{\yy}(F_l)-2n_z(F_l)\Big).$$

Let $\gamma_a$ be the projection to $\Sigma$ of the image of $\partial D^2\cap\{z;\text{Re}(z)\le 0\}$ under $\varphi$ and $\gamma_b$ be the projection of the image of $\partial D^2\cap\{z;\text{Re}(z)\ge 0\}$. Then $\gamma_a-\gamma_b=\partial D(A)=\sum_l\partial F_l$.
We observe that the a corner of $F_l$ can either be an $x_i$, a $y_j$ or neither. If it is neither of the two, then the interiors of $\gamma_a$ and $\gamma_b$ intersect at that point. We call this point an auxiliary corner and denote each of them by $w_k$ for some $k$. Now fix and auxiliary corner $w_k$. Let $r$ be the multiplicity of $\gamma_a$ and $s$ be the multiplicity of $\gamma_b$ in a neighborhood of $w_k$ and assume $r<s$, see Figure \ref{aux}(a). We might also have an extra $t$ to the multiplicity of all the four regions. But that will not affect the calculations. So, for simplicity, we can assume that $t=0$. We get a convex corner for $r$ of the $F_l$'s and a concave one for $r$ of the $F_l$'s. For $(s-r)$ of the $F_l$'s, this point lies on the boundary and is not a corner. We denote by $\gamma_{w_k}$ the flow line passing through $w_k$. We say that $w_k$ is positive if it behaves as a convex $x_i$ (i.e $\gamma_{w_k}$ is positively oriented) and as a concave $y_j$ (i.e $\gamma_{w_k}$ is negatively oriented), and that $w_k$ is negative if the opposite happens, as shown in Figure \ref{aux}(b).

The orientations on $\gamma_a$ and $-\gamma_b$ give rise to an orientation of $\partial F_l$. That is also the orientation induced from $\Sigma$, since $A\ge 0$. Now we need to define $\{E_1,E_2,E_3\}$. We want to define $E_1$ on $F_l$ in the same way as we did when we had only one $F_l$. But we have to be more careful since we may have $\alpha$ and $\beta$ curves contained on the surface $F_l$. This can happen in three different ways: there is a boundary degenerate corner, an interior degenerate corner or a pair of nondegenerate corners that are on $\partial F_l$ but are not corners of $\partial F_l$ for some $l$. Figure \ref{degn} shows an example of each of those case.

\begin{center}\begin{figure}[ht]
    \begin{overpic}[scale=0.5]{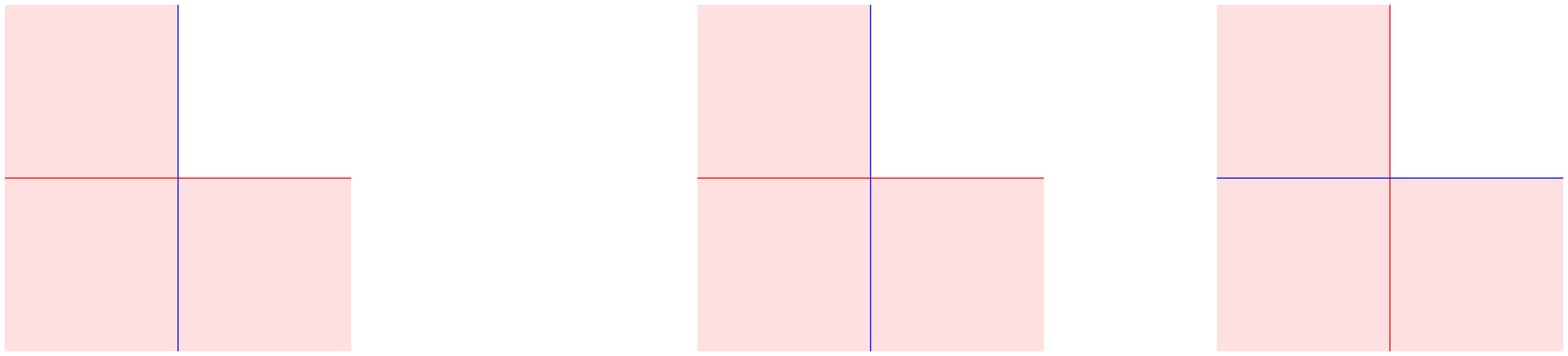}
    \put(5,16){$s$}
    \put(16,5){$r$}
    \put(2.5,5){$r+s$}
    \put(50,-3){positive $w_k$}
    \put(82,-3){negative $w_k$}
    \put(12.5,19){\color{blue}$\beta$}
    \put(57,19){\color{blue}$\beta$}
    \put(90,19){\color{red}$\alpha$}
    \put(20,12){\color{red}$\alpha$}
    \put(65,12){\color{red}$\alpha$}
    \put(98,12){\color{blue}$\beta$}
    \put(10,-7){(a)}
    \put(70,-7){(b)}
    \end{overpic}
    \vspace{0.5cm}
    \caption{}
    \label{aux}
\end{figure}\end{center}

\begin{center}
\begin{figure}[ht]
 \begin{overpic}[scale=0.5]{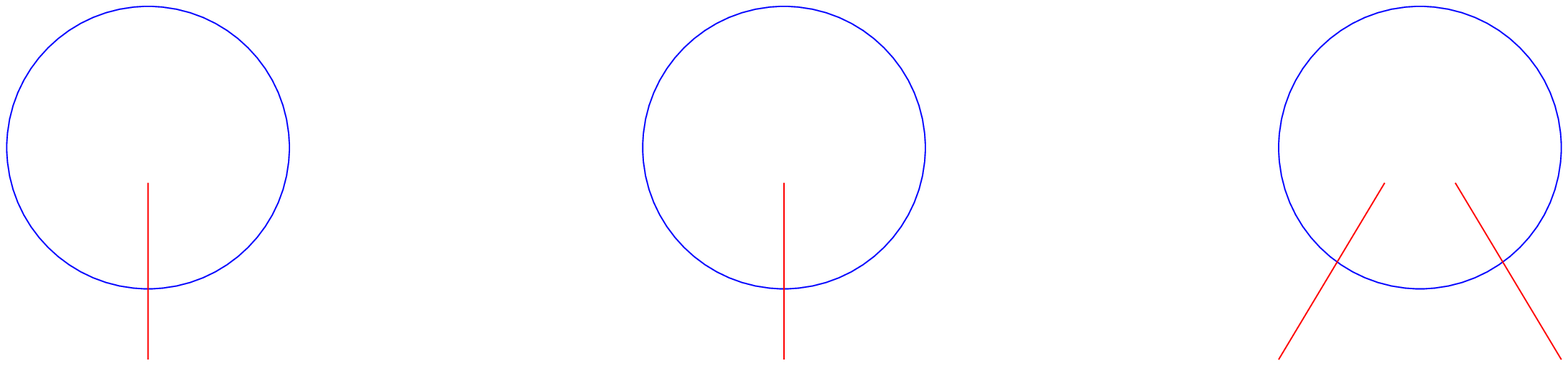}
  \put(6,7){0}
  \put(11,7){0}
  \put(6,1){1}
  \put(11,1){1}
  \put(47,7){1}
  \put(52,7){1}
  \put(47,1){1}
  \put(52,1){1}
  \put(90,7){0}
  \put(84.5,9){0}
  \put(95.5,9){0}
  \put(90,1){2}
  \put(82,5){1}
  \put(98,5){1}
  \put(-6,-7){boundary degenerate}
  \put(35,-7){interior degenerate}
  \put(74,-7){pair of nondegenerate}
 \end{overpic}
  \vspace{0.5cm}
  \caption{}
\label{degn}
\end{figure}
\end{center}

For each $F_l$, we can define $C_l$, just as we did to define $C$ in Step 1, except that when one of the edges of $F_l$ is a circle, we will attach a disk to it, not a triangular surface.
We will first define $E_1$ on $F_m$. For each edge of $F_m$ that is not a circle, we define $E_1$ to be the positive unit tangent vector to $\partial F_m$ outside neighborhoods of the corners. Along an edge that is a circle, we define $E_1$ to be any vector field whose rotation number along this circle is 0. We note that nondegenerate corners along this circle, e.g. Figure \ref{degn}, cannot happen for $F_m$. If we have an $\alpha$ or $\beta$ circle contained in the interior of $F_m$, then we define $E_1$ along this circle such that its rotation number is 0. In a neighborhood of each corner including the auxiliary ones, we rotate $E_1$ as least as possible, as we did in Step 1. We also need to choose some nondegenerate corners, i.e. not auxiliary corners, to rotate a total of $\chi(F_m)+d(F_m)$, where $d(F_m)$ denotes the number of boundary degenerate corners of $F_m$. After doing that, we can now extend $E_1$ to a vector field on $F_m$. Now we extend it to the triangular surfaces belonging to $C_m$ just as we did in Step 1. For each circle on $\partial F_m$, we extend $E_1$ to the attaching disk by requiring that it is tangent to the surface $f^{-1}(t)$, for every $3/2\le t\le2$, if the circle is a $\beta_j$ and for every $1\le t\le 3/2$ if the circle is an $\alpha_i$. We note that $E_1$ is not tangent to this disk at any point except for the corresponding critical point, i.e when $t=1$ or $2$, and on $\Sigma$.

Now we want to extend $E_1$ to $F_{m-1}\supset F_m$. We first define $E_1$ on $\partial F_{m-1}$. We can do it the same way as we did for $\partial F_m$ except near the intersection of  $\partial F_{m-1}$ and $F_m$, where $E_1$ is already defined. This can only happen in two cases. The first one is when they intersect at an auxiliary corner. In this case we just rotate $E_1$ along $\partial F_{m-1}$ as least as possible, so that it coincides with $E_1$ at the corner. The second case is when there is a circle in $F_{m-1}$ that contains two nondegenerate corners. In this case, $E_1$ is already defined in the segment connecting the two nondegenerate corners. So we extend it to all of this circle in such a way that its rotation number is 0. After doing that, we can extend $E_1$ to $C_{m-1}$ just as we did for $C_m$. Proceeding by induction, we define $E_1$ on $C_l$, for $l=m,m-1,\dots,1$.

We can define $E_3$ on $C_l$ as we did before, but when we have a circle on $\partial C_l$, we extend $E_3$ to the corresponding disk by requiring that $E_3$ is normal to $f^{-1}(t)$ for every $t$. Now we define $E_2$ such that $\{E_1,E_2,E_3\}$ is an orthonormal basis for $TY$ along $C_l$ for all $l$.

For every $\alpha$ or $\beta$ circle contained in $F_1$, either we have attached the corresponding disk to it in some $C_l$ or it contains an interior degenerate corner, in which case, we have also required that the rotation number of $E_1$ along this circle is 0. So in the latter case, we can extend $E_1$ and $E_3$ as we did when the circle was in the boundary. Now, there is no obstruction to extending the orthonormal frame $\{E_1,E_2,E_3\}$ to all of $Y$ and, as before, that determines a trivialization by sending $E_i$ to $e_i\in \R^3$.

Again, we take $K_{\xx}'=w_{\xx}^{-1}(e_1)$ and $K_{\yy}'=w_{\yy}^{-1}(e_1)$. We can isotope them the same way as before to get $K_{\xx}$ and $K_{\yy}$ so that they contain segments of $\gamma_{x_i}$ and $\gamma_{y_i}$ near the respective corners. We also define the surfaces $\tilde{C}_l$ in the same fashion as we did in Step 1. Now, to compute the difference of their framings, we will use several immersed cobordisms. We start from $K_{\yy}$. We use $\tilde{C}_1$ to define an immersed cobordism. This cobordism exchanges segments of the flow lines $\gamma_{y_j}$ corresponding to corners $y_j$ of $F_1$ with segments of some $\gamma_{x_i}$ corresponding to corners $x_i$ of $F_1$ and possibly segments of some $\gamma_{w_k}$, corresponding to concave auxiliary corners $w_k$. The next step is to use $\tilde{C}_2$ to construct an immersed cobordism which exchanges segments of some $\gamma_{y_i}$ by segments of some $\gamma_{x_i}$, possibly involves auxiliary corners and keeps the rest of the link fixed. We can continue this construction inductively and define immersed cobordisms for $\tilde{C}_1,\dots,\tilde{C}_m$. Every time we obtain a $\gamma_{w_k}$, it will first appear as a concave corner and later as a convex corner. If $w_k$ is positively oriented, then it will appear as a positive concave angle and a negative convex angle, which means that they just cancel, when we stack the immersed cobordisms. If $w_k$ is negatively oriented, then it will appear as a negative concave corner first and as a positive convex corner later. In this case, we add trivial cobordisms to the immersed cobordisms where the segment of $\gamma_{w_k}$ appears and to all of the ones in between. After stacking all those, the auxiliary corners cancel and we obtain an immersed cobordism from $K_{\yy}$ to $K_{\xx}$. Similarly to the case when we had only one $F_l$, we conclude that the difference of the framings using the cobordism induced by $\tilde{C}_l$ is $\chi(F_l)+d(F_l)+q(F_l)$ for each $l$, where $q(F_l)$ is the number of concave corners of $F_l$, not counting the auxiliary corners. Moreover for each auxiliary corner $w_k$, the difference of framings is $+1$ if $w_k$ is positive, and $-1$ if $w_k$ is negative. So using this immersed cobordism from $K_{\yy}$ to $K_{\xx}$, the difference between the framings is
$\sum_{l=1}^m \Big(\chi(F_l)+d(F_l)+q(F_l)\Big)$ plus the signed count of the auxiliary corners.

We know that there is an embedded link cobordism from $K_{\yy}$ to $K_{\xx}$ in the same relative homology class as the immersed cobordism we were considering. So, by Lemma~\ref{l1}, $\tau_{\yy}-\eta_{\yy}$ equals the difference obtained using the immersed cobordism minus twice the signed number of self-intersections of the immersed cobordism, since the self-intersection number of an embedded cobordism is 0. We now need to consider three cases.
\begin{itemize}
 \item[(i)] There are boundary degenerate corners or a pair of nondegenerate corners on an $\alpha$ or $\beta$ curve contained in some $\partial F_l$.
\item[(ii)] There are interior degenerate corners
\item[(iii)] There are nondegenerate corners in the interior of some $F_l$.
\item[(iii)] The basepoint $z$ in in the interior of $F_1$.
\end{itemize}

In case (i), self-intersections could exist if $K_{\xx}$ or $K_{\yy}$ intersects $C_l$ for $l$ such that $C_l$ contains the disk we attach to the corresponding $\alpha$ or $\beta$ circle. Let $x_i$ and $y_j$ be the corresponding corners. Then $C_l$ divides $N(\gamma_{x_i})$ in two disconnected components and we can see that $K_{\xx}$ enters and exits $N(\gamma_{x_i})$ in the same component. Similarly for $y_j$. Therefore the signed number of intersections with $C_l$ is 0. In this case, $n_{x_i}+n_{y_j}=1$. But this $+1$ appears in the difference of framings when we added $d(F_l)$ turns to $E_1$ near a nondegenerate corner.

In case (ii), let $x_i=y_j$ be the interior degenerate corner. So, $n_{x_i}+n_{y_j}=2$. Also, $K_{\xx}=K_{\yy}$ in $N(\gamma_{x_i})$. Also, $K_{\xx}$ intersects $C_l$ negatively at only one point. Therefore, by Lemma \ref{l1}, we have two add +2 to the difference of the framings.

In case (iii), since $F_i\supset F_j$, for $i<j$, and the cobordism corresponding to $\tilde{C}_i$ is taken before the one corresponding to $\tilde{C}_j$, only the nondegenerate $y_j$'s which are in the interior of an $F_j$ correspond to intersections. So, by Lemma \ref{l1}, we have to add twice the number of interior nondegenerate $y_j$'s. On the other hand, if we had built our immersed cobordisms in the opposite order, i.e. starting with $F_m$ and going all the way to $F_1$, then we would get the same result, except that we would be counting twice the number of interior nondegenerate corners $x_i$, but in this case the sign of the auxiliary corners are switched. Since the two calculations have to coincide, it follows that the number of interior nondegenerate corners $x_i$ plus the number of positive auxiliary corners equals the number of interior nondegenerate corners $y_j$ plus the number of negative auxiliary corners. So twice the number of interior nondegenerate $x_i$'s plus the signed count of the auxiliary corners equals the total number of interior nondegenerate corners. That is exactly what we were missing to get the full $n_{\xx}(F_l)$ and $n_{\yy}(F_l)$. Therefore, combining cases (i),(ii) and (iii), we conclude that the difference of the framings is $\sum_{l=1}^m \Big(e(F_l)+n_{\xx}(F_l)+n_{\yy}(F_l)\Big)$, which is equal to $\mu(A)$.

In case (iv), then $K_{\xx}=K_{\yy}$ near $\gamma_z$. If $K_{\xx}$ intersects $F_l$, then it does so positively. Hence, by Lemma \ref{l1}, we get an extra $-2\sum_l n_z(F_l)$ in the difference of framings.
Therefore $$\widetilde{\text{gr}}(\xx)-\widetilde{\text{gr}}(\yy)=\tau_{\yy}-\eta_{\yy}=\mu(A)-2n_z(A)=\text{gr}(\xx,\yy).$$

\section{The absolute grading of the contact invariant} \label{SecOnContCls}

In ~\cite{OSz1}, Oszv\'ath-Szab\'o defined the contact class $c(\xi) \in \widehat{HF}(-Y)$ for a contact 3-manifold $(Y,\xi)$, and they showed that it is an invariant of $\xi$. Later, Honda-Kazez-Mati\'c~\cite{HKM} gave an alternative definition of $c(\xi)$ using an open book decomposition adapted to $\xi$. In this section, we compute the absolute grading of the contact invariant $c(\xi)$.

\subsection{Contact topology and open book decompositions}

Let $Y$ be a closed oriented 3-manifold. A contact structure $\xi$ is a maximally non-integrable co-oriented 2-plane field, i.e. there exists a 1-form $\lambda$ such that $\lambda \wedge d\lambda>0$ and $\xi=ker\lambda$. We call such $\lambda$ a {\em contact form} of $\xi$. The {\em Reeb vector field} $R_\lambda$ associated with $\lambda$ is the unique vector field which satisfies (i) $R_\lambda ~\lrcorner~ d\lambda=0$, (ii) $R_\lambda ~\lrcorner~ \lambda=1$. Although the dynamics of $R_\lambda$ depend heavily on the choice of $\lambda$, its homotopy class is an invariant of $\xi$. In fact, two contact structures are homotopic if and only if their associated Reeb vector fields are homotopic.

Now recall that an {\em open book decomposition} of $Y$ is a pair $(S,h)$, where $S$ is a compact, oriented surface of genus $g$ with boundary, $h:S \to S$ is a diffeomorphism which is the identity on $\bdry S$, and $Y$ is homeomorphic to $(S\times[0,1])/\sim$. The equivalence relation $\sim$ is defined by $(x,1) \sim (h(x),0)$ for $x \in S$ and $(y,t) \sim (y,t')$ for $y \in \bdry S$ and $t,t' \in [0,1]$. Given a contact structure $\xi$ on $Y$, an open book $(S,h)$ is {\em adapted to} $\xi$ if there exists a contact form $\lambda$ for $\xi$ such that $R_\lambda$ is positively transverse to $int(S)$ and positively tangent to $\bdry S$.

Fix an adapted open book $(S,h)$ of $(Y,\lambda)$. Following~\cite{HKM}, let $\{a_1,\cdots,a_{2g}\}$ be a set of pairwise disjoint, properly embedded arcs on $S$ such that $S \setminus \bigcup_{i=1}^{2g} a_i$ is a single polygon. We call $\{a_1,\cdots,a_{2g}\}$ a {\em basis} for $S$. Next let $b_i$ be an arc which is isotopic to $a_i$ by a small isotopy so that the following hold:

\be
\item{The endpoints of $a_i$ are isotoped along $\bdry S$, in the direction given by the boundary orientation of S.}
\item{$a_i$ and $b_i$ intersect transversely in one point $x_i$ in the interior of S.}
\item{If we orient $a_i$, and $b_i$ is given the induced orientation from the isotopy, then the sign of the intersection $a_i \cap b_i$ is $+1$.}
\ee

See Figure~\ref{BasisArcs}.\\

\begin{figure}[ht]
    \begin{overpic}[scale=.3]{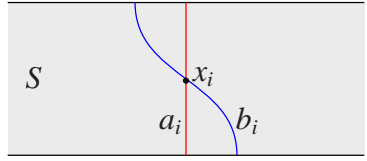}
    \put(42.5,7.5){$a_i$}
    \put(64,7.5){$b_i$}
    \put(51.5,21){$x_i$}
    \put(5,19){$S$}
    \end{overpic}
    \caption{The arcs $a_i$ and $b_i$ on $S$.}
    \label{BasisArcs}
\end{figure}

Observe that $(S,h)$ naturally induces a Heegaard splitting of $Y$ by letting $H_1=(S\times[0,1/2])/\sim$ and $H_2=(S\times[1/2,1])/\sim$. This gives a Heegaard decomposition of $Y$ of genus $2g$ with Heegaard surface $\Sigma=\bdry H_1=-\bdry H_2$. By choosing a basis $\{a_1,\cdots,a_{2g}\}$ for $S$ and following the constructions above, we obtain two collections of simple closed curves $\boldsymbol\alpha=\{\alpha_1,\cdots,\alpha_{2g}\}$ and $\boldsymbol\beta=\{\beta_1,\cdots,\beta_{2g}\}$ on $\Sigma$, where $\alpha_i=\bdry (a_i\times[0,1/2])$ and $\beta_i=\bdry (b_i\times[1/2,1])$ for $i=1,\cdots,2g$. Then one can properly place the basepoint $z$ and reverse the orientation of $Y$ to obtain a weakly admissible Heegaard diagram $(\Sigma,\boldsymbol\beta,\boldsymbol\alpha,z)$ for $-Y$. It is observed in~\cite{HKM} that $\mathbf{x}=(x_1,\cdots,x_{2g}) \in \widehat{CF}(\Sigma,\bm\beta,\bm\alpha,z)$ defines a cycle, where $x_i=a_i \cap b_i \in \alpha_i \cap \beta_i$, $i=1,\cdots,2g$.

\begin{thm}[Honda-Kazez-Mati\'c \cite{HKM}] \label{HKM}
The class $[\mathbf{x}] \in \widehat{HF}(-Y)$ represented by $\mathbf{x} \in \widehat{CF}(\Sigma,\bm\beta,\bm\alpha,z)$ from above is an invariant of $\xi$ and it is equal to $c(\xi)$ defined in \cite{OSz1}.
\end{thm}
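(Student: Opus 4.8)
The plan is to establish two things: first, that the homology class $[\mathbf{x}]\in\widehat{HF}(-Y)$ does not depend on the auxiliary choices (the open book $(S,h)$ compatible with $\xi$ and the basis $\{a_1,\dots,a_{2g}\}$ for $S$); and second, that it coincides with the Ozsv\'ath--Szab\'o invariant $c(\xi)$ defined in \cite{OSz1}.

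For well-definedness I would first fix $(S,h)$ and vary the basis. Any two bases of arcs for $S$ are related by finitely many arc slides, and each arc slide changes the induced Heegaard diagram $(\Sigma,\bb,\ba,z)$ by isotopies together with handleslides among the $\ba$-curves (and correspondingly the $\bb$-curves) which can be arranged to carry the distinguished generator $\mathbf{x}$ to the distinguished generator of the new diagram; hence $[\mathbf{x}]$ is unchanged. Then I would vary the open book: by the Giroux correspondence any two open books compatible with $\xi$ become isotopic after finitely many positive stabilizations, so it suffices to check that a positive stabilization of $(S,h)$ changes the Heegaard diagram by a stabilization that adds one new $\alpha$-curve and one new $\beta$-curve meeting transversally in a single point $x_{2g+1}$, with $\mathbf{x}\mapsto\mathbf{x}\cup\{x_{2g+1}\}$. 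Since Heegaard stabilization induces the standard isomorphism and carries such a ``contact-type'' generator to the corresponding one, the class $[\mathbf{x}]$ is preserved; together these steps show it is an invariant of $\xi$.

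To identify $[\mathbf{x}]$ with $c(\xi)$ I would recall the construction of \cite{OSz1}. After a further stabilization one may assume the binding $B$ is connected. Let $Y_0(B)$ be the result of surgery on $B\subset Y$ along the page framing; it fibers over $S^1$ with fiber the capped-off page, and in the extremal \spinc structure $\fs$ one has $\widehat{HF}(-Y_0(B),\fs)\cong\Z$ with a canonical generator $c_0$, by Ozsv\'ath--Szab\'o's computation of the knot Floer homology of fibered knots. There is a $2$-handle cobordism $W\colon -Y_0(B)\to -Y$ and $c(\xi):=F_W(c_0)$. The plan is to realize all of this inside the open-book Heegaard picture: modifying the diagram near the binding (replacing the $\beta$-curve dual to $B$ by a page-framed longitude $\boldsymbol\gamma$) produces a diagram in which the sub-diagram built from $\ba$ and $\boldsymbol\gamma$ represents $-Y_0(B)$ with $c_0$ represented by the obvious analogue of $\mathbf{x}$, and the resulting Heegaard triple computes $F_W$ by counting holomorphic triangles. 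I would then show $F_W(c_0)=\mathbf{x}$ by identifying the relevant homotopy class of Whitney triangle with the ``small'' triangle supported near the intersection points $x_i$, checking that it is the unique positive triangle of the correct Maslov index with $n_z=0$ in the correct \spinc class, and that it admits a single holomorphic representative. If, instead, one simply declares $c(\xi)$ to be the class produced by the analogous diagrammatic construction for a particular compatible open book, then this step reduces to the well-definedness above.

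The main obstacle will be this last step: showing that the modified diagram genuinely represents $-Y_0(B)$ with $c_0$ the asserted generator (this rests on the fibered-knot computation and on pinning down the behaviour of the extremal \spinc structure), matching up the \spinc structures on $W$ so that the remaining summands of $F_W(c_0)$ vanish, and carrying out the holomorphic triangle count with its sign. An alternative that trades the triangle computation for a surgery computation is to use the characterization of $c(\xi)$ by its value on $(S^3,\xi_{\mathrm{std}})$ --- the generator of $\widehat{HF}(S^3)$ --- together with naturality under the cobordism maps of Legendrian surgeries, and to verify directly that $[\mathbf{x}]$ satisfies both by tracking how the open book and the distinguished generator transform under a Legendrian surgery; there the hard part becomes controlling, at the chain level, the maps and homotopies entering the surgery exact triangle.
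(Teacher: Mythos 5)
This statement is not proved in the paper: it is quoted verbatim from Honda--Kazez--Mati\'c \cite{HKM} as background, so there is no internal proof to compare against. Judged against the actual argument in \cite{HKM}, your outline follows the same strategy as the original: basis independence via arc slides realized as handleslides, open book independence via the Giroux stabilization correspondence, and identification with the Ozsv\'ath--Szab\'o class via $0$-surgery on a connected binding, the rank-one group in the extremal \spinc structure, and the $2$-handle cobordism map computed by a holomorphic triangle count. Be aware, though, that what you have written is a plan rather than a proof: the two places you flag as ``obstacles'' are precisely where the real work lies --- showing that the handleslide/triangle maps carry the distinguished generator to the distinguished generator (this requires an explicit count of the small holomorphic triangles, not just an assertion that it ``can be arranged''), and pinning down the \spinc structures on the cobordism so that the other summands of $F_W(c_0)$ vanish. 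As a reconstruction of the cited result your route is the standard and correct one; as a self-contained proof it is incomplete at exactly those computational steps.
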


\begin{rmk}
In light of Theorem~\ref{HKM}, in order to prove Theorem~\ref{mainThm}(b), it suffices to show
\begin{equation} \label{ContClsEq}
    \widetilde{\text{gr}}(\mathbf{x})=[\xi]
\end{equation}
as homotopy classes of oriented 2-plane fields.
\end{rmk}

\subsection{Proof of Theorem~\ref{mainThm}(b)}

Throughout this section, we fix a contact form $\lambda$ and an adapted open book decomposition $(S,h)$ of $(Y,\lambda)$. Note that the contact invariant is presented as an intersection point $\mathbf{x}$ in $\widehat{CF}(-Y)$. The plan is to use the Pontryagin-Thom construction to show that the vector field constructed in Section~\ref{SecDef} to define $\widetilde{\text{gr}}(\mathbf{x})$ is homotopic to the Reeb vector field $R_\lambda$.

\begin{proof}[Proof of Theorem~\ref{mainThm}(b)]
Let $f$ be a Morse function adapted to our special Heegaard diagram $(\Sigma,\bdsym\alpha,\bdsym\beta,z)$, where $\Sigma=(S\times\{0\}) \cup (S\times\{1/2\})$. Note that one needs to reverse the orientation of $Y$ to define $[\mathbf{x}]=c(\xi)$. Equivalently, we shall consider, for the rest of the proof, the same Heegaard diagram $(\Sigma,\bdsym\alpha,\bdsym\beta,z)$, but with the downward gradient vector field $-\nabla f$. All the constructions of the absolute grading function carry over by simply reversing the direction of all vector fields.
Let $v_{\xx}$ be a nonvanishing vector field, which is a modification of $-\nabla f$, as defined in Section~\ref{SecDef}. In particular, the homotopy class of the orthogonal complement of $v_{\xx}$ equals $\widetilde{\text{gr}}(\xx)$.
Let $\tilde S \subset int(S)$ be a closed subsurface such that $S$ deformation retracts onto $\tilde S$, and assume that $h$ is supported in $\tilde S\times\{1\}$. It is easy to see that $-\nabla f$ is homotopic to $R_\lambda$ by linear interpolation in a small neighborhood $N(\tilde S\times\{1\})$ of $\tilde S\times\{1\}$ in $M$ because they are both positively transverse to $\tilde S\times\{1\}$. Let $H=Y \setminus N(\tilde S\times\{1\})$ be the genus $2g$ handlebody\footnote{In fact $H$ is a handlebody with corners, but this is irrelevant here because we are considering continuous vector fields.}. So it suffices to show that $v_{\xx}|_H$ is homotopic to $R_\lambda|_H$ relative to $\bdry H$.

To do so, consider a closed collar neighborhood $a_i \times [-1,1] \subset S\times\{1/2\}$ of $a_i$ on the middle page such that it contains $b_i$ in the interior, for $i=1,\cdots,2g$. Let $B_i=(a_i \times [-1,1] \times [0,1]) \cap H \subset H$ be a 3-ball (with corners) in $H$, which contains $a_i$ and $b_i$ in the interior. See Figure~\ref{ReebandGrad} for pictures of the vector fields $R_\lambda|_{B_i}$ and $-\nabla f|_{B_i}$.\\

\begin{figure}[ht]
    \begin{overpic}[scale=.5]{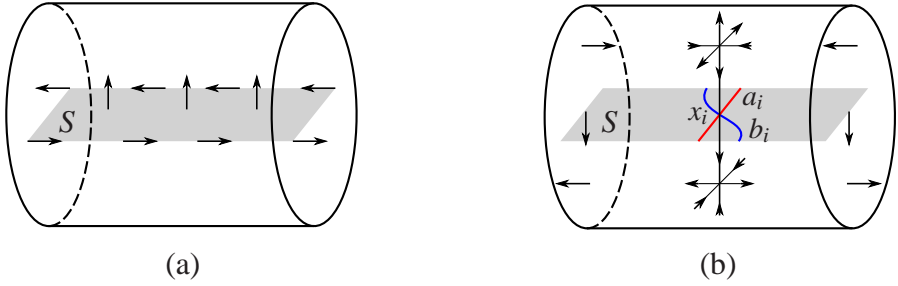}
    \put(82.8,14){\small{$a_i$}}
    \put(83.5,10.3){\small{$b_i$}}
    \put(76.6,12.3){\small{$x_i$}}
    \put(6,11){$S$}
    \put(67,11){$S$}
    \put(18,-5){(a)}
    \put(78,-5){(b)}
    \end{overpic}
    \newline
    \caption{(a) The Reeb vector field $R_\lambda$ restricted to $B_i$. (b) The downward gradient vector field $-\nabla f$ restricted to $B_i$.}
    \label{ReebandGrad}
\end{figure}

\n
{\em Claim:} There exists a non-singular vector field $R'_\lambda$ on $H$, homotopic to $R_\lambda$ relative to $\bdry H$, such that (i) $R'_\lambda|_{\bdry B_i}=v_{\xx}|_{\bdry B_i}$, (ii) $R'_\lambda|_{B_i}$ is homotopic to $v_{\xx}|_{B_i}$ relative to $\bdry B_i$, for $i=1,\cdots,2g$.

\begin{proof}[Proof of Claim]
Let $D_l=(a_i\times\{-1\}\times[0,1]) \cap H$ and $D_r=(a_i\times\{1\}\times[0,1]) \cap H$ be the left and right disk boundaries of $B_i$, respectively. Observe that $R_\lambda=v_{\xx}$ on $\bdry B_i \setminus (D_l \cup D_r)$ by construction. We shall consider a collar neighborhood $N(D_l)=(a_i\times[-1-\delta,-1+\delta]\times[0,1]) \cap H$ of $D_l$ for some small $\delta>0$, and homotope $R_\lambda$ to $R'_\lambda$ with the desired properties within $N(D_l)$. Note that the same construction can be carried over to a collar neighborhood of $D_r$.

We construct a model vector field $V_l$ on $D^2\times[-1,1]$ in steps. First let $\mathcal{F}_0$ be a singular foliation on $D^2$ which has two elliptic singularities as depicted in Figure~\ref{modReeb}(a). Let $\gamma \subset D^2\times[-1,0]$ be a properly embedded, boundary parallel arc such that $\bdry \gamma$ is exactly the union of the two singularities of $\mathcal{F}_0$ on $D^2\times\{-1\}$. Then there exists a foliation $\mathcal{F}$ by disks on $D^2\times[-1,0]$ such that for any leaf $F$ of $\mathcal{F}$, we have $\bdry F \cap int(D^2\times[-1,0])=\gamma$, and $\bdry F \cap (D^2\times\{-1\})$ is a leaf of $\mathcal{F}_0$. Let $V'_l$ be a non-singular vector field on $D^2\times[-1,0]$ such that it is positively tangent to $\gamma$ and positively transverse to the interior of all leaves of $\mathcal{F}$ as depicted in Figure~\ref{modReeb}(b). Up to homotopy, we can assume that $V'_l|_{D^2\times\{0\}}=v_{\xx}|_{D_l}$ as vector fields on a disk. By fixing a trivialization of the tangent bundle $T(D^2 \times [-1,1])$ using the standard embedding $D^2\times[-1,1] \subset \mathbb{R}^3$, we define the vector field $V_l$ on $D^2\times[-1,1]$ by
\begin{equation*}
    V_l(x,t)=
    \begin{cases}
    ~V'_l(x,t) & \text{if } -1 \leq t \leq 0,\\
    ~V'_l(x,-t) & \text{if } 0 \leq t \leq 1.
    \end{cases}
\end{equation*}
where $x \in D^2$ is any point. Identify $D^2\times[-1,1]$ with $N(D_l)$ by rescaling in the $[-1,1]$-direction such that $D_l$ is identified with $D^2\times\{0\}$, $N(D_l) \setminus B_i$ is identified with $D^2\times[-1,0]$, and $N(D_l) \cap B_i$ is identified with $D^2\times[0,1]$. It is easy to see that $R_\lambda|_{N(D_l)}$ is homotopic to $V_l$ as vector fields on $N(D_l)$ relative to the boundary. Similarly, one can define a non-singular vector field $V_r$ on $N(D_r)$ such that $R_\lambda|_{N(D_r)}$ is homotopic to $V_r$ as vector fields on $N(D_r)$ relative to the boundary. By applying the above homotopy, which is supported in $N(D_l) \cup N(D_r)$, to $R_\lambda$, and repeat this process for every $B_i$, $i=1,\cdots,2g$, we obtain a new non-singular vector field $R'_\lambda$. Observe that $R'_\lambda$ satisfies condition (i) by construction.

\begin{figure}[ht]
    \begin{overpic}[scale=.55]{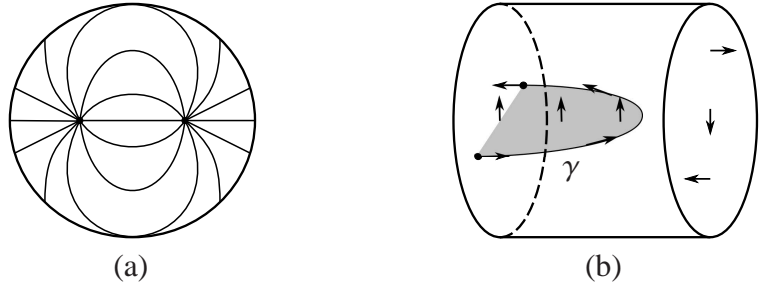}
    \put(14,-5){(a)}
    \put(77,-5){(b)}
    \put(74,8.5){$\gamma$}
    \end{overpic}
    \vspace{4mm}
    \caption{(a) The singular foliation on $D^2$. (b) The vector field $V_l'$ on a leaf of $\mathcal{F}$ in $D^2\times[-1,0]$.}
    \label{modReeb}
\end{figure}

To show that $R'_\lambda$ satisfies condition (ii), we use the Pontryagin-Thom construction. Trivialize the tangent bundle $TB_i$ by embedding $B_i \subset \mathbb{R}^3$ such that $D_l$ (or $D_r$) is parallel to the $xz$-plane, and the $[-1,1]$-direction is parallel to the $y$-axis. Consider the associated Gauss maps $G_{v_{\xx}}|_{B_i}: B_i \to S^2$ and $G_{R'_\lambda}|_{B_i}: B_i \to S^2$. Without loss of generality, we assume that $G_{v_{\xx}}|_{B_i}$ and $G_{R'_\lambda}|_{B_i}$ are smooth, and $p=(0,1,0) \in S^2$ is a common regular value. Let $p'=(\epsilon,\sqrt{1-\epsilon^2},0) \in S^2$ be a nearby common regular value which keeps track of the framing, where $\epsilon>0$ is small. It is now a straightforward computation that the Pontryagin submanifolds $G^{-1}_{v_{\xx}}(p)$ and $G^{-1}_{R'_\lambda}(p)$ are both framed cobordant to the framed arc depicted in Figure~\ref{PontThom} relative to the boundary. Hence $R'_\lambda|_{B_i}$ is homotopic to $v_{\xx}|_{B_i}$ relative to $\bdry B_i$, for all $i=1,\cdots,2g$. This finishes the proof of the claim.
\end{proof}

\begin{figure}[ht]
    \begin{overpic}[scale=.35]{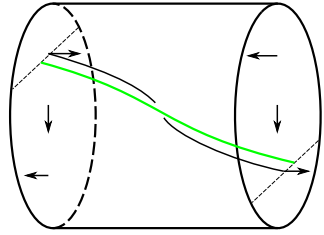}
    \end{overpic}
    \caption{A framed arc in $B_i$, where the framing is indicated by the green arc.}
    \label{PontThom}
\end{figure}

It remains to show that $R'_\lambda$ is homotopic to $v_{\xx}$ on $\overline{H \setminus (\bigcup_{i=1}^{2g}B_i)}$ relative to the boundary. Let $(D^2,\text{id})$ be the trivial open book of $S^3$, and $\tilde D \subset int(D^2)$ be a slightly smaller disk. Let $\tilde{H}$ denote $\overline{H \setminus (\bigcup_{i=1}^{2g}B_i)}$ and observe that it is naturally identified with $(D^2\times[0,1] \setminus ((\tilde D\times[0,\epsilon)) \cup (\tilde D\times(1-\epsilon,1])))/\sim$ by construction. On the one hand, it is easy to see that $R'_\lambda|_{\tilde{H}}$ is homotopic to the restriction of the Reeb vector field compatible with the open book $(D^2,\text{id})$. On the other hand, note that $\tilde{H}$ is nothing but a neighborhood of the gradient trajectory which connects the index 0 critical point to the index 3 critical point. Hence it follows immediately from our construction of $\widetilde{\text{gr}}(\mathbf{x})$ that $v_{\xx}|_{\tilde{H}}$ is also homotopic to the Reeb vector field compatible with $(D^2,\text{id})$. This finishes the proof of Theorem~\ref{mainThm}(b).
\end{proof}

Now we compute the twisted absolute grading of the twisted contact invariant defined in~\cite{OSz6}. Let $\mathbf{x}\in\mathbb{T}_\alpha\cap\mathbb{T}_\beta$ be the generator in $\widehat{CF}(-Y)$, which defines the usual contact invariant as before. Let $\mathbb{Z}[H^1(Y;\mathbb{Z})]^\times$ denote the set of invertible elements in $\mathbb{Z}[H^1(Y;\mathbb{Z})]$. First recall that the twisted contact invariant $\underline{c}(\xi)$ associated with the contact structure $\xi$ is defined by
\begin{equation*}
    \underline{c}(\xi)=[u\cdot\mathbf{x}] \in \widehat{\underline{HF}}(-Y)/\mathbb{Z}[H^1(Y;\mathbb{Z})]^\times
\end{equation*}
where $u\in\mathbb{Z}[H^1(Y;\mathbb{Z})]^\times$. Although $\underline{c}(\xi)$ is only well-defined up to a unit in $\mathbb{Z}[H^1(Y;\mathbb{Z})]$, the twisted absolute grading $\widetilde{gr}_{tw}(\underline{c}(\xi))$ defined by (\ref{twgr}) still makes sense. The following result is immediate.

\begin{cor} \label{twgrcontcls}
If $\xi$ is a contact structure on $Y$, then $\widetilde{gr}_{tw}(\underline{c}(\xi))=[\xi] \in \mathcal{P}(Y)$.
\end{cor}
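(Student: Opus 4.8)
The plan is to reduce Corollary~\ref{twgrcontcls} to Theorem~\ref{mainThm}(b), which we have already proved. The key observation is that the twisted absolute grading $\widetilde{gr}_{tw}$, as defined in (\ref{twgr}), depends only on the underlying intersection point and completely ignores the twisted group-ring coefficients: by definition $\widetilde{gr}_{tw}(e^{\xi}\mathbf{x}) = \widetilde{gr}(\mathbf{x})$ for every $\xi \in H^1(Y;\mathbb{Z})$. So the twisted coefficient, and in particular the ambiguity by a unit $u \in \mathbb{Z}[H^1(Y;\mathbb{Z})]^\times$ in the definition of $\underline{c}(\xi)$, plays no role in the value of the twisted grading.

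First I would recall that $\underline{c}(\xi) = [u \cdot \mathbf{x}]$ where $\mathbf{x} = (x_1,\dots,x_{2g}) \in \widehat{CF}(\Sigma,\bm\beta,\bm\alpha,z)$ is exactly the Honda--Kazez--Mati\'c representative of the untwisted contact invariant $c(\xi)$ from Theorem~\ref{HKM}, and $u$ is an invertible element of $\mathbb{Z}[H^1(Y;\mathbb{Z})]$. Writing $u = \sum_k n_k e^{\xi_k}$ as a finite sum, we have $u \cdot \mathbf{x} = \sum_k n_k e^{\xi_k}\mathbf{x}$, which is a homogeneous element of the twisted complex: applying $\widetilde{gr}_{tw}$ term by term gives $\widetilde{gr}_{tw}(e^{\xi_k}\mathbf{x}) = \widetilde{gr}(\mathbf{x})$ for every $k$, so the whole sum $u\cdot\mathbf{x}$ is supported in the single twisted grading $\widetilde{gr}(\mathbf{x})$. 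Hence $\widetilde{gr}_{tw}(\underline{c}(\xi)) = \widetilde{gr}(\mathbf{x})$, and this is independent of the choice of representative $u$, so the quantity is well-defined despite the unit ambiguity.

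Finally I would invoke Theorem~\ref{mainThm}(b), which asserts precisely that $\widetilde{gr}(\mathbf{x}) = [\xi]$ as homotopy classes of oriented 2-plane fields (equivalently, equation (\ref{ContClsEq})). Combining with the previous paragraph yields $\widetilde{gr}_{tw}(\underline{c}(\xi)) = \widetilde{gr}(\mathbf{x}) = [\xi] \in \mathcal{P}(Y)$, as desired. There is no real obstacle here: the content is entirely carried by Theorem~\ref{mainThm}(b), and the only thing to check is the essentially formal point that $\widetilde{gr}_{tw}$ descends to the unit-coset $\underline{c}(\xi) \in \widehat{\underline{HF}}(-Y)/\mathbb{Z}[H^1(Y;\mathbb{Z})]^\times$ and assigns to it the same value as $\widetilde{gr}$ assigns to $c(\xi)$. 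One should be mildly careful that $u \cdot \mathbf{x}$ is genuinely homogeneous with respect to $\widetilde{gr}_{tw}$ so that taking its grading makes sense; but since every monomial $e^{\xi_k}\mathbf{x}$ has the same grading $\widetilde{gr}(\mathbf{x})$, this is automatic.
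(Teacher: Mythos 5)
Your proposal is correct and follows exactly the paper's route: the paper's own proof is the one-line observation that the corollary follows immediately from the definition (\ref{twgr}) of $\widetilde{gr}_{tw}$ together with Theorem~\ref{mainThm}(b). You have simply spelled out the same formal argument (unit ambiguity is invisible to the twisted grading, so the value is $\widetilde{gr}(\mathbf{x})=[\xi]$) in more detail.
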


\begin{proof}
This follows immediately from (\ref{twgr}) and Theorem~\ref{mainThm}(b).
\end{proof}

Now we are ready to prove the corollaries given in Section 1.

\begin{proof}[Proof of Corollary~\ref{KM97}]
If $(Y,\xi)$ is strongly fillable, then $c(\xi) \neq 0 \in \widehat{HF}(-Y)$ according to~\cite{OSz1}. Since $\widehat{HF}(-Y)$ is a finitely generated Abelian group, there can be only finitely many absolute gradings, i.e., homotopy classes of 2-plane fields, that support strongly fillable contact structures.

Now if $(Y,\xi)$ is weakly fillable, then $\underline{c}(\xi) \neq 0 \in \widehat{\underline{HF}}(-Y)/\mathbb{Z}[H^1(Y;\mathbb{Z})]^\times$ according to~\cite{OSz6}. Since $\widehat{\underline{HF}}(-Y)$ is finitely generated as a $\mathbb{Z}[H^1(Y;\mathbb{Z})]$ module, the same argument as above together with Corollary~\ref{twgrcontcls} implies that there can be only finitely many homotopy classes of 2-plane fields in $Y$ that support weakly fillable contact structures.
\end{proof}

\begin{proof}[Proof of Corollary~\ref{L-sp}]
By definition if $Y$ is an $L$-space, then $\widehat{HF}(-Y)$ is a free Abelian group of rank $|H_1(Y;\mathbb{Z})|$. Therefore there are at most $|H_1(Y;\mathbb{Z})|$-many homotopy classes of 2-plane fields that support strongly fillable contact structures. To get the same result for weakly fillable contact structures, it suffices to observe that since $Y$ is a rational homology sphere by assumption, we have
\begin{equation*}
    \widehat{\underline{HF}}(-Y) \simeq \widehat{HF}(-Y) \otimes \mathbb{Z}[H^1(Y;\mathbb{Z})].
\end{equation*}
Hence $\widehat{\underline{HF}}(-Y)$ is a free $\mathbb{Z}[H^1(Y;\mathbb{Z})]$ module of rank $|H_1(Y;\mathbb{Z})|$, and therefore the conclusion follows as before.
\end{proof}

\begin{proof}[Proof of Corollary~\ref{Lis}]
It suffices to note that according to~\cite{OSz4}, if $Y$ admits a metric of constant positive curvature, then $Y$ is an $L$-space.
\end{proof}

\section{4-dimensional cobordism and absolute $\mathbb{Q}$-grading}

Let $W$ be a connected compact oriented 4-dimensional cobordism between two connected oriented 3-manifolds $Y_0$ and $Y_1$ such that $\bdry W = -Y_0 \cup Y_1$. Fixing a Spin$^c$ structure $\mathfrak{t}$ on $W$, Ozsv\'ath-Szab\'o~\cite{OSz3} constructed a map $F_{W,\mathfrak{s}}: HF^\circ(Y_0,\mathfrak{t}|_{Y_0}) \to HF^\circ(Y_1,\mathfrak{t}|_{Y_1})$ between Heegaard Floer homology groups by choosing a handle decomposition of $W$, and counting holomorphic triangles. It turns out that $F_{W,\mathfrak{t}}$ is an invariant of $W$, i.e., it is independent of the choice of a handle decomposition of $W$. Throughout this section we fix a Heegaard diagram $(\Sigma,\bm{\alpha},\bm{\beta})$ for $Y_0$ and a handle decomposition of $W$. Let $(\Sigma,\bm{\alpha},\bm{\gamma})$ be the associated Heegaard diagram for $Y_1$ as constructed in~\cite{OSz3}. We consider the associated chain map $F_{W,\mathfrak{t}}:\widehat{CF}(\bm{\alpha},\bm{\beta},\mathfrak{t}|_{Y_0}) \to \widehat{CF}(\bm{\alpha},\bm{\gamma},\mathfrak{t}|_{Y_1})$.

Observe that $F_{W,\mathfrak{t}}:\widehat{CF}(\bm{\alpha},\bm{\beta},\mathfrak{t}|_{Y_0}) \to \widehat{CF}(\bm{\alpha},\bm{\gamma},\mathfrak{t}|_{Y_1})$ is a linear map between graded vector spaces. However, according to Theorem~\ref{mainThm}(a), $\widehat{CF}(\bm{\alpha},\bm{\beta},\mathfrak{t}|_{Y_i})$ is graded by the set of homotopy classes of oriented 2-plane fields $\mathcal{P}(Y_i)$, $i=0,1$, so it is not possible to define an integer degree of $F_{W,\mathfrak{t}}$. There is a weaker notion which is applicable here. Namely, let $W: Y_0 \to Y_1$ be a cobordism and $\xi_i$ be an oriented 2-plane field on $Y_i$, for $i=0,1$. We say $\xi_0 \sim_W \xi_1$ if and only if there exists an almost complex structure $J$ on $W$ such that $[\xi_i]=[TY_i \cap J(TY_i)]$, for $i=0,1$, as homotopy classes of oriented 2-plane fields.

The main goal of this section is to prove Theorem~\ref{mainThm}(d) on the chain level, which we formalize in the following theorem for the reader's convenience.

\begin{thm} \label{CobThm}
Let $W: Y_0 \to Y_1$ be a compact oriented cobordism with a fixed handle decomposition, $\mathfrak{t} \in$ {\em Spin}$^c(W)$ a {\em Spin}$^c$ structure on $W$, and $F_{W,\mathfrak{t}}:\widehat{CF}(\bm{\alpha},\bm{\beta},\mathfrak{t}|_{Y_0}) \to \widehat{CF}(\bm{\alpha},\bm{\gamma},\mathfrak{t}|_{Y_1})$ the associated cobordism map as discussed above. Then $\widetilde{\text{gr}}(\mathbf{x}) \sim_W \widetilde{\text{gr}}(\mathbf{y})$ for any homogeneous generator $\mathbf{x} \in \mathbb{T}_\alpha \cap \mathbb{T}_\beta$ in $\widehat{CF}(\bm{\alpha},\bm{\beta},\mathfrak{t}|_{Y_0})$, and any homogeneous summand $\mathbf{y}$ of $F_{W,\mathfrak{t}}(\mathbf{x})$.
\end{thm}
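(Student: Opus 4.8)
The plan is to run a four-dimensional version of the Pontryagin--Thom computation of Section~\ref{SecDef}, with a holomorphic triangle playing the role the Whitney disk played in the proof of Theorem~\ref{ThmRel}. First I would reduce to the case of an elementary cobordism: since $J$ is pinned down up to homotopy on $TW|_{Y_1}$ by the $2$-plane field $[TY_1\cap J(TY_1)]$, two almost complex structures witnessing $\xi_0\sim_{W'}\xi_1$ and $\xi_1\sim_{W''}\xi_2$ can be glued along $Y_1$, so $\sim$ is transitive under composition of cobordisms and it suffices to treat $1$-, $2$-, and $3$-handle cobordisms separately. The $1$- and $3$-handle cases I would dispatch by writing down explicit local models for the almost complex structure and for the modified vector field near the attached handle, matching the ``$\otimes\,\Theta^{+}$'' description of the induced maps. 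The substantial case is a $2$-handle cobordism, so fix a Heegaard triple $(\Sigma,\boldsymbol\alpha,\boldsymbol\beta,\boldsymbol\gamma)$ subordinate to the attaching link and let $X=X_{\alpha\beta\gamma}$ be the associated $4$-manifold, with $\partial X=-Y_{\alpha\beta}\sqcup(-Y_{\beta\gamma})\sqcup Y_{\alpha\gamma}$, so that $W$ is $X$ with the $\#^{g}(S^{1}\times S^{2})$-boundary $Y_{\beta\gamma}$ capped by the standard handlebody $\natural^{g}(S^{1}\times D^{3})$. If $\mathbf y$ is a summand of $F_{W,\mathfrak t}(\mathbf x)$ there is $\psi\in\pi_{2}(\mathbf x,\Theta_{\beta\gamma},\mathbf y)$ with $\mathfrak s(\psi)|_{W}=\mathfrak t$, Maslov index $\mu(\psi)=0$, and nonempty moduli space; the latter forces the domain $D(\psi)=\sum a_{k}D_{k}$ to have all $a_{k}\ge 0$. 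By Theorem~\ref{mainThm}(b) applied to the trivial open book, $\widetilde{\text{gr}}(\Theta_{\beta\gamma})$ is the homotopy class of the Stein fillable contact structure on $\#^{g}(S^{1}\times S^{2})$, which extends over the cap as an almost complex structure. Hence it is enough to produce an almost complex structure $J$ on $X$ inducing $\widetilde{\text{gr}}(\mathbf x)$, $\widetilde{\text{gr}}(\Theta_{\beta\gamma})$, and $\widetilde{\text{gr}}(\mathbf y)$ on the three boundary components.

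Second, I would construct $J$. Fix a compatible metric; the condition $TY_{i}\cap J(TY_{i})=\xi_{i}$ determines $J$ on $TX|_{Y_{i}}$ up to homotopy, so the problem becomes extending this prescribed boundary section of the $S^{2}$-bundle $SO(TX)/U(2)$ over $X$. I would use the $\Sigma$-bundle structure of $X$ over the interior of the triangle $\Delta$, with the handlebodies $U_{\boldsymbol\alpha},U_{\boldsymbol\beta},U_{\boldsymbol\gamma}$ filling in over the three edges, to set up a model vector field and a trivialization of $TX$ over a $2$-complex spine, arranged to restrict near the three edges to trivializations of the type used in Section~\ref{SecDef}. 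Via the tautological correspondence, $\psi$ is represented by a holomorphic curve $\widehat\psi\subset\Sigma\times\Delta$, a degree-$g$ branched cover of $\Delta$, whose boundary runs along $\boldsymbol\alpha$, $\boldsymbol\beta$, $\boldsymbol\gamma$ over the edges, whose corners over the vertices are $\mathbf x$, $\Theta_{\beta\gamma}$, $\mathbf y$, and whose $\Sigma$-projection is $D(\psi)$. Exactly as $D(A)$ was resolved into the nested surfaces $F_{1}\supset\cdots\supset F_{m}$ in Section~\ref{SecDef}, I would resolve $\widehat\psi$ into nested (branched) surfaces $\mathcal F_{1}\supset\cdots\supset\mathcal F_{m}$ properly embedded in $X$, and use their tubular neighborhoods to modify the model along the three ends so that it restricts to $w_{\mathbf x}$, $w_{\Theta}$, and $w_{\mathbf y}$ there, in the same way the modifications along $\gamma_{x_{i}}$ and $\gamma_{y_{j}}$ produced $w_{\mathbf x}$ and $w_{\mathbf y}$ in dimension three.

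Third comes the obstruction computation. The primary obstruction to extending the boundary almost complex structure over the $3$-skeleton of $X$ is the Spin$^{c}$ obstruction, and it vanishes because $\psi$ itself furnishes a Spin$^{c}$ structure $\mathfrak s(\psi)$ on $X$ restricting to the Spin$^{c}$ structures underlying $\widetilde{\text{gr}}(\mathbf x)$, $\widetilde{\text{gr}}(\Theta_{\beta\gamma})$, and $\widetilde{\text{gr}}(\mathbf y)$. The secondary obstruction, lying in $H^{4}(X,\partial X;\mathbb Z)\cong\mathbb Z$, is then computed by an Euler-measure and signed-self-intersection count around the corners of the $\mathcal F_{l}$: Lipshitz's index formula for triangles packages the Euler measures together with the point counts $n_{\mathbf x}$, $n_{\mathbf y}$, $n_{\Theta}$ at the (convex, concave, and auxiliary) corners into $\mu(\psi)$, while the self-intersections of the $\mathcal F_{l}$ inside the $4$-manifold are absorbed using Lemma~\ref{l1} and Lemma~\ref{l2}, exactly as in the passage from Step~2 to Step~3 of Section~\ref{SecDef}. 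The hypothesis $\mu(\psi)=0$ is precisely what makes this obstruction vanish, the $n_{z}$-type terms dropping out because $\Theta_{\beta\gamma}$ and the triangle domains avoid the basepoint region. Extending $J$ over the standard caps, and then composing with the $1$- and $3$-handle pieces, completes the proof.

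The hardest part will be the third step: it is the four-dimensional counterpart of the most delicate part of Section~\ref{SecDef}. In dimension three the identification ``nonvanishing vector field $\leftrightarrow$ co-oriented $2$-plane field'' is what let framed-cobordism bookkeeping compute $\widetilde{\text{gr}}$ directly, whereas in dimension four the relevant object is an almost complex structure, so the translation between the vector-field/framed-surface data carried by the $\mathcal F_{l}$, the $S^{2}$-valued obstruction cochains, and the geometric relation $\sim_{W}$ has to be set up with care, and the corner-by-corner analysis (including boundary- and interior-degenerate corners, and especially the $\Theta_{\beta\gamma}$-vertex, where many strands of $\widehat\psi$ come together) must be redone in that setting. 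If the direct construction becomes unwieldy, a fallback is to first establish a numerical criterion for $\xi_{0}\sim_{W}\xi_{1}$---vanishing of the Spin$^{c}$ obstruction together with the equality, read through the affine structures on $\mathcal P(Y_{i},\cdot)$ and a chosen extension $\mathfrak t$, of $\widetilde{\text{gr}}(\mathbf y)-\widetilde{\text{gr}}(\mathbf x)$ with $\tfrac14\big(c_{1}(\mathfrak t)^{2}-2\chi(W)-3\sigma(W)\big)$---and then deduce Theorem~\ref{CobThm} from Theorem~\ref{mainThm}(a), Ozsv\'ath--Szab\'o's degree-shift formula for $F_{W,\mathfrak t}$, and the fact that $\psi$ supplies the required $\mathfrak t$.
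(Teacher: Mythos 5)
Your overall architecture (reduce to elementary cobordisms, handle $1$- and $3$-handles by explicit models, reduce the $2$-handle case to constructing an almost complex structure on $W_{\alpha,\beta,\gamma}$ inducing $\widetilde{\text{gr}}(\mathbf{x})$, $\widetilde{\text{gr}}(\Theta)$, $\widetilde{\text{gr}}(\mathbf{y})$ on the three boundary components, then cap off $Y_{\beta,\gamma}$ using Theorem~\ref{mainThm}(b)) matches the paper. But your third step contains a genuine gap: the assertion that the secondary obstruction in $H^4(X,\partial X;\mathbb{Z})\cong\mathbb{Z}$ to extending $J$ with the prescribed boundary values ``is computed by an Euler-measure and signed-self-intersection count'' and ``vanishes precisely because $\mu(\psi)=0$'' is not an application of Lemmas~\ref{l1} and~\ref{l2} or of Lipshitz's formula as they stand --- it is essentially the entire content of the theorem (equivalently, of the Ozsv\'ath--Szab\'o degree-shift formula, generalized to non-torsion Spin$^c$ structures), restated. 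Lemmas~\ref{l1} and~\ref{l2} compare framings induced by two cobordisms inside $Y\times[0,1]$; what you need here is a relative Euler-class/self-intersection identity for branched surfaces inside the genuinely four-dimensional $X$, relating the $S^2$-valued obstruction cochain for $SO(4)/U(2)$-sections to the corner data of the resolved triangle domain. None of that is set up, and the corner analysis at the $\Theta$-vertex (where the domain is typically far from embedded) is exactly where it would be delicate. Your fallback has its own gap: Ozsv\'ath--Szab\'o's degree-shift formula is only available when $\mathfrak{t}|_{Y_i}$ is torsion, and $\theta(\xi)$ is only defined for torsion $c_1(\xi)$, so the numerical criterion does not cover the general case of the theorem (and in this paper the logical order is reversed: Corollary~\ref{ColQ} is \emph{deduced from} Theorem~\ref{CobThm}).

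The paper sidesteps the general triangle entirely, and this is the idea you are missing. One only ever builds $J$ explicitly for the \emph{single small embedded triangle} of Figure~\ref{holtriangle} connecting $\mathbf{x}$, $\Theta$, and its nearest-neighbor $\mathbf{y}$: there the $2$-plane field is written down by hand over the handlebody edges and over $H\times(\Sigma\setminus(D_1\cup D_2))$, and the only obstruction lives on two $4$-balls $H\times D_1$, $H\times D_2$, where an explicit corner-rounding computation (Lemma~\ref{techclxtangency}) identifies the boundary $2$-plane fields with the two standard contact structures on $S^3$, which visibly bound. For an arbitrary summand $\mathbf{y}'$ one writes $\psi'=\psi+\phi_1+\phi_2+\phi_3$, uses $\mu(\psi)=\mu(\psi')=0$ to get $\mu(\phi_1)-2n_z(\phi_1)=-(\mu(\phi_3)-2n_z(\phi_3))$, and then invokes Theorem~\ref{ThmRel} (the refinement of the relative grading, already proved) to conclude $\widetilde{\text{gr}}(\mathbf{y}')=\widetilde{\text{gr}}(\mathbf{y})-(\mu(\phi_3)-2n_z(\phi_3))$ while $\widetilde{\text{gr}}(\mathbf{x})$ is unchanged mod $d(c_1)$; adjusting $J$ by the corresponding integer shift finishes the argument. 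If you want to salvage your route, you should either carry out the four-dimensional obstruction computation in full (a substantial undertaking), or adopt this reduction to a single explicit triangle.
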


Before we give the proof of Theorem~\ref{CobThm}, we take a step back and look at the Heegaard Floer homology $HF^\circ(Y,\mathfrak{s})$ for a torsion Spin$^c$ structure $\mathfrak{s}$. By~\cite{OSz3}, there is an absolute $\mathbb{Q}$-grading of $HF^\circ(Y,\mathfrak{s})$ which lifts the relative $\mathbb{Z}$-grading. We shall see that our construction indeed generalizes their absolute $\mathbb{Q}$-grading. To do so, recall the following construction due to R. Gompf~\cite{Gom}. Let $\xi$ be an oriented 2-plane field on a closed, oriented 3-manifold $Y$. Then there exists a compact, almost complex 4-manifold $(X,J)$ whose {\em almost-complex boundary} is $(Y,\xi)$, i.e. $Y=\bdry X$ (as oriented manifolds) and $\xi=TY \cap J(TY)$ with the complex orientation. If $c_1(\xi)$ is a torsion class, then let $\theta(\xi)= (PD~c_1(X))^2-2\chi(X)-3\sigma(X) \in \mathbb{Q}$, where $\chi$ is the Euler characteristic and $\sigma$ is the signature. Observe that $\theta(\xi)$ is independent of the choice of the capping almost complex 4-manifold $(X,J)$ because the quantity $(PD~c_1(X))^2-2\chi(X)-3\sigma(X)$ vanishes for a closed $X$.

Let $\mathfrak{s} \in \textrm{Spin}^c(Y)$ be a Spin$^c$ structure such that $c_1(\mathfrak{s})$ is a torsion class, and let $\mathfrak{U}$ be the set of homogeneous elements in $\widehat{CF}(Y,\mathfrak{s})$. We define an absolute grading function $\widetilde{\text{gr}}_0: \mathfrak{U} \to \mathbb{Q}$ by $\widetilde{\text{gr}}_0(\mathbf{x})=(2+\theta(\widetilde{\text{gr}}(\mathbf{x})))/4 \in \mathbb{Q}$ for any $\mathbf{x} \in \mathfrak{U}$. This induces an absolute grading function on $CF^\infty(Y,\mathfrak{s})$ by $\widetilde{\text{gr}}_0([\mathbf{x},i])=2i+\widetilde{\text{gr}}_0(\mathbf{x})$, and hence on the sub- and quotient-complexes $CF^-(Y,\mathfrak{s})$ and $CF^+(Y,\mathfrak{s})$.

For reader's convenience, we recall the following theorem/definition of the absolute $\mathbb{Q}$-grading due to Ozsv\'ath-Szab\'o~\cite{OSz3}.

\begin{thm}[Ozsv\'ath-Szab\'o]
There exists an absolute grading function $\overline{gr}:\mathfrak{U} \to \mathbb{Q}$ satisfying the following properties:
\begin{enumerate}
    \item{The homogeneous elements of least grading in $\widehat{HF}(S^3, \mathfrak{s}_0)$ have absolute grading zero.}
    \item{The absolute grading lifts the relative grading, in the sense that if $\mathbf{x}, \mathbf{y} \in \mathfrak{U}$, then $\overline{gr}(\mathbf{x},\mathbf{y})=\overline{gr}(\mathbf{x})-\overline{gr}(\mathbf{y})$.}
    \item{If $W$ is a cobordism from $Y_0$ to $Y_1$ endowed with a Spin$^c$ structure $\mathfrak{t}$ whose restriction to $Y_i$ is torsion for $i = 0, 1$, then
        \begin{equation*}
            \overline{\textrm{gr}}(F_{W,\mathfrak{t}}(\mathbf{x})) - \overline{\textrm{gr}}(\mathbf{x})= \frac{(PD~c_1(\mathfrak{t}))^2-2\chi(W)-3\sigma(W)}{4}
        \end{equation*}
        for any $\mathbf{x} \in \mathfrak{U}$.}
\end{enumerate}
\end{thm}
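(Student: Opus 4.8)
The plan is to verify that the function $\widetilde{\text{gr}}_0\colon\mathfrak{U}\to\mathbb{Q}$ introduced above, $\widetilde{\text{gr}}_0(\xx)=(2+\theta(\widetilde{\text{gr}}(\xx)))/4$, already satisfies the three listed properties, and to set $\overline{gr}:=\widetilde{\text{gr}}_0$. Only existence is asserted, so this is enough; moreover (1)--(3) together with connectedness of the cobordism category and nonvanishing of the relevant $\widehat{HF}$ groups force $\overline{gr}$ to be unique, so $\widetilde{\text{gr}}_0$ must coincide with the function of~\cite{OSz3} (this is Corollary~\ref{ColQ}). The entire argument is a translation of Theorem~\ref{mainThm} through the algebraic definition of $\widetilde{\text{gr}}_0$, using two basic facts about Gompf's invariant $\theta$~\cite{Gom}: that it is a homotopy invariant of the $2$-plane field, and that it changes by $4k$ when the plane field is changed by $k$ in the affine $\Z$-structure on $\PP(Y,\fs)$ (for $c_1(\fs)$ torsion), with the sign normalized to our convention that a left-handed twist raises a framing by $+1$.

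Property (1): by Theorem~\ref{mainThm}(b) applied to the standard tight contact structure $\xi_{std}$ on $S^3$, supported by the trivial open book $(D^2,\mathrm{id})$, its contact invariant---a generator of $\widehat{HF}(-S^3)\cong\widehat{HF}(S^3,\mathfrak{s}_0)\cong\Z$---has grading $[\xi_{std}]$, independently of the diagram by Theorem~\ref{mainThm}(c). Capping $(S^3,\xi_{std})$ by $(B^4,J_{std})$, which has $c_1=0$, $\chi=1$ and $\sigma=0$, gives $\theta([\xi_{std}])=-2$, so $\widetilde{\text{gr}}_0=0$ on the unique homogeneous class of least grading. Property (2): if $\fs_z(\xx)=\fs_z(\yy)=\fs$ with $c_1(\fs)$ torsion then $d(c_1(\fs))=0$, and Theorem~\ref{mainThm}(a) gives $\widetilde{\text{gr}}(\xx)-\widetilde{\text{gr}}(\yy)=\text{gr}(\xx,\yy)\in\Z$; combined with the $\theta$-change formula this yields $\widetilde{\text{gr}}_0(\xx)-\widetilde{\text{gr}}_0(\yy)=\frac{1}{4}\bigl(\theta(\widetilde{\text{gr}}(\xx))-\theta(\widetilde{\text{gr}}(\yy))\bigr)=\text{gr}(\xx,\yy)$. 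The induced extension $\widetilde{\text{gr}}_0([\xx,i])=2i+\widetilde{\text{gr}}_0(\xx)$ is consistent because the $\Z$-action of Remark~\ref{RmkAct} shifts $\theta$ by $8i$.

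Property (3): let $\yy$ be a homogeneous summand of $F_{W,\mathfrak{t}}(\xx)$. By the chain-level proof of Theorem~\ref{CobThm} there is an almost complex structure $J$ on $W$ inducing $\mathfrak{t}$ with $[TY_0\cap J(TY_0)]=\widetilde{\text{gr}}(\xx)$ and $[TY_1\cap J(TY_1)]=\widetilde{\text{gr}}(\yy)$. Glue to $(W,J)$ compact almost complex caps $(X_0,J_0)$ and $(X_1,J_1)$ realizing $\theta(\widetilde{\text{gr}}(\xx))$ and $\theta(\widetilde{\text{gr}}(\yy))$ and compatible for collar-gluing, obtaining a closed almost complex $4$-manifold $Z=X_0\cup_{Y_0}W\cup_{Y_1}X_1$, for which $c_1(Z)^2=2\chi(Z)+3\sigma(Z)$. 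Novikov additivity of $\sigma$, additivity of $\chi$, and additivity of the relative self-intersection numbers $(PD~c_1)^2$ across the two gluing hypersurfaces then give
\begin{equation*}
\theta(\widetilde{\text{gr}}(\yy))-\theta(\widetilde{\text{gr}}(\xx))=(PD~c_1(\mathfrak{t}))^2-2\chi(W)-3\sigma(W),
\end{equation*}
and dividing by $4$ gives $\overline{gr}(F_{W,\mathfrak{t}}(\xx))-\overline{gr}(\xx)=\bigl((PD~c_1(\mathfrak{t}))^2-2\chi(W)-3\sigma(W)\bigr)/4$.

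The step I expect to be the main obstacle is the additivity of the relative characteristic number $(PD~c_1)^2$ used in property (3): unlike $\chi$ and $\sigma$ it is not formally additive, and the boundary terms of the three pieces cancel precisely because the almost-complex-boundary conditions $[TY_i\cap J(TY_i)]=\widetilde{\text{gr}}(\cdot)$ are exactly the data required to glue the relative canonical classes; carrying this out is the same bookkeeping that makes Gompf's $\theta$ well defined relative to a fixed $2$-plane field. A related point requiring care is that one must know the almost complex structure furnished by the proof of Theorem~\ref{CobThm} actually induces $\mathfrak{t}$ on $W$, and not merely a \spinc structure restricting to $\mathfrak{t}$ on $\bdry W$---otherwise $(PD~c_1)^2$ can change---but this is built into that construction, which is adapted to the $\mathfrak{t}$-summand of the holomorphic triangle count.
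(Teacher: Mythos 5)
Your argument is correct as a proof of the statement as written, but it runs in the opposite direction from the paper: the paper gives no proof of this theorem at all---it is recalled from Ozsv\'ath--Szab\'o~\cite{OSz3}, where $\overline{\text{gr}}$ is constructed intrinsically (normalize on $S^3$, propagate along cobordisms via the shift formula in (3), and check independence of choices)---and it is then used as an external benchmark against which $\widetilde{\text{gr}}_0$ is compared. What you have written is, nearly line for line, the paper's own proof of Corollary~\ref{ColQ}: the normalization via $\theta([\xi_{std}])=-2$ from capping by $B^4$, property (2) from Theorem~\ref{mainThm}(a) together with the fact that a unit change of plane field changes $\theta$ by $4$, and property (3) from the almost complex structure supplied by Theorem~\ref{CobThm} plus Novikov additivity. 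Since only existence is asserted, exhibiting $\widetilde{\text{gr}}_0$ as a witness is logically adequate, and the two caveats you flag---the non-formal additivity of $(PD~c_1)^2$ across the gluing hypersurfaces, and the need for the almost complex structure from Theorem~\ref{CobThm} to induce $\mathfrak{t}$ on all of $W$ rather than merely restrict correctly on $\bdry W$---are precisely the points the paper's own computation in Corollary~\ref{ColQ} also glosses over. What your route costs is the content of Corollary~\ref{ColQ} itself: if the theorem is only known via $\widetilde{\text{gr}}_0$, then the identification $\widetilde{\text{gr}}_0=\overline{\text{gr}}$ rests entirely on your uniqueness remark, which needs more than connectedness of the cobordism category---one must produce, for each $Y$ and torsion $\fs$, a cobordism to $(S^3,\mathfrak{s}_0)$ whose induced map is nonzero on each homogeneous piece (e.g.\ after passing to $HF^\infty$), since property (3) is vacuous for vanishing cobordism maps.
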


We have the following corollary:

\begin{cor}\label{ColQ}
The function $\widetilde{\text{gr}}_0$ described above defines an absolute $\mathbb{Q}$-grading for $HF^\circ(Y,\mathfrak{s})$, which coincides with the absolute $\mathbb{Q}$-grading $\overline{\textrm{gr}}$ defined above.
\end{cor}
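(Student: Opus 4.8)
The plan is to verify that $\widetilde{\text{gr}}_0$ satisfies the three characterizing properties of Ozsv\'ath-Szab\'o's grading $\overline{\text{gr}}$, since those properties determine it uniquely. The key observation underlying everything is the formula $\theta(\xi) = (PD\,c_1(X))^2 - 2\chi(X) - 3\sigma(X)$ for any almost complex capping $(X,J)$ of $(Y,\xi)$, together with Gompf's theorem that such cappings always exist when $c_1(\xi)$ is torsion, and the fact that $\theta$ is a homotopy invariant of $\xi$ because the quantity vanishes for closed $X$.

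First I would establish the normalization property (1). For $(S^3,\mathfrak{s}_0)$, the generator of $\widehat{HF}(S^3,\mathfrak{s}_0)$ is graded by $\widetilde{\text{gr}}$ with the homotopy class of the standard tight contact structure $\xi_{std}$ on $S^3$, by Theorem~\ref{mainThm}(b) applied to the trivial open book $(D^2,\text{id})$ (indeed this is exactly the 2-plane field appearing at the end of the proof of Theorem~\ref{mainThm}(b)). Capping $(S^3,\xi_{std})$ by the standard complex 4-ball $B^4 \subset \C^2$ gives $c_1=0$, $\chi=1$, $\sigma=0$, so $\theta(\xi_{std}) = 0 - 2 - 0 = -2$, hence $\widetilde{\text{gr}}_0 = (2 + (-2))/4 = 0$. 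So the generator of $\widehat{HF}(S^3,\mathfrak{s}_0)$ has $\widetilde{\text{gr}}_0 = 0$, matching property (1).

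Next, property (2) — that $\widetilde{\text{gr}}_0$ lifts the relative grading — follows from Theorem~\ref{mainThm}(a) combined with a computation of how $\theta$ changes under the $\Z$-action on $\PP(Y,\mathfrak{s})$. Concretely, if $\widetilde{\text{gr}}(\xx) - \widetilde{\text{gr}}(\yy) = \text{gr}(\xx,\yy) \in \Z$ (the divisibility $d$ vanishes for torsion $c_1(\mathfrak{s})$), I would check that $\theta(\rho + n) = \theta(\rho) + 4n$ for $\rho \in \PP(Y,\mathfrak{s})$ and $n \in \Z$: adding $n$ to a 2-plane field corresponds, on the capping side, to modifying $X$ (e.g. by a connect sum with copies of $\overline{\C P^2}$ or by altering the almost complex structure near a point) in a way that changes $(PD\,c_1(X))^2 - 2\chi(X) - 3\sigma(X)$ by exactly $4n$ — this is precisely Gompf's computation of the $\theta$-invariant's behavior under the $\Z$-action. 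Granting that, $\widetilde{\text{gr}}_0(\xx) - \widetilde{\text{gr}}_0(\yy) = (\theta(\widetilde{\text{gr}}(\xx)) - \theta(\widetilde{\text{gr}}(\yy)))/4 = \text{gr}(\xx,\yy)$, which is property (2). Finally, for the cobordism property (3): given $W: Y_0 \to Y_1$ with torsion $\mathfrak{t}|_{Y_i}$ and a homogeneous $\xx$ mapping to a homogeneous summand $\yy$ of $F_{W,\mathfrak{t}}(\xx)$, Theorem~\ref{CobThm} gives $\widetilde{\text{gr}}(\xx) \sim_W \widetilde{\text{gr}}(\yy)$, i.e.\ there is an almost complex $J$ on $W$ with $[\widetilde{\text{gr}}(\xx)] = [TY_0 \cap J(TY_0)]$ and $[\widetilde{\text{gr}}(\yy)] = [TY_1 \cap J(TY_1)]$. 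I would then cap off $Y_0$ with an almost complex $(X_0, J_0)$ realizing $\widetilde{\text{gr}}(\xx)$ and glue $X = X_0 \cup_{Y_0} W$, an almost complex 4-manifold with almost complex boundary $(Y_1, \widetilde{\text{gr}}(\yy))$; computing $\theta(\widetilde{\text{gr}}(\yy))$ from $X$ and using additivity of $\chi$, $\sigma$ (Novikov) and the behavior of $c_1$ and intersection forms under gluing along a rational homology sphere piece — here one uses that $\mathfrak{t}|_{Y_i}$ torsion forces the relevant classes to behave additively — yields $\theta(\widetilde{\text{gr}}(\yy)) - \theta(\widetilde{\text{gr}}(\xx)) = (PD\,c_1(\mathfrak{t}))^2 - 2\chi(W) - 3\sigma(W)$, which upon dividing by $4$ is exactly property (3). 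By uniqueness of the grading satisfying (1)--(3), $\widetilde{\text{gr}}_0 = \overline{\text{gr}}$, and the statement about $CF^\infty, CF^\pm$ follows since both gradings shift by $2i$ on $[\xx,i]$.

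The main obstacle I anticipate is the additivity bookkeeping in property (3): carefully verifying that $(PD\,c_1)^2$, $\chi$, and $\sigma$ add up correctly when gluing $X_0$ to $W$ along $Y_0$, and in particular handling the intersection form on $H^2(X;\Q)$ versus the pieces $H^2(X_0)$ and $H^2(W)$ — one needs the Mayer--Vietoris sequence and the fact that torsion-ness of $\mathfrak{t}|_{Y_0}$ ensures $c_1(\mathfrak{t})$ pairs trivially with the image of $H^2(Y_0;\Q)$, so that the square is additive. This is essentially the same computation Ozsv\'ath-Szab\'o perform; the point is simply to confirm it is compatible with our $\theta$-based definition. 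The other potentially delicate point is pinning down precisely which modification of the capping manifold realizes the $+1$ shift in $\PP(Y,\mathfrak{s})$ and checking the $+4$ change in $\theta$, but this is a local computation that can be done, e.g., by blowing up and using the adjunction-type formula for $c_1^2$.
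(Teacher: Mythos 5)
Your proposal is correct and follows essentially the same route as the paper: the paper likewise reduces the corollary to the relative-grading refinement (via Theorem~\ref{ThmRel} together with the observation that a full twist changes $\theta$ by $4$), the normalization $\widetilde{\text{gr}}_0(c(\xi_{std}))=0$ computed from the capping $B^4\subset\C^2$, and the cobordism formula obtained by gluing an almost complex capping of $(Y_0,\widetilde{\text{gr}}(\xx))$ to $(W,J')$ from Theorem~\ref{CobThm} and applying Novikov additivity of the signature. The only cosmetic difference is that you phrase the conclusion as uniqueness of the grading satisfying properties (1)--(3), whereas the paper folds the relative-grading property in first and then checks the remaining two conditions.
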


\begin{proof}
We use the Pontryagin-Thom construction. By fixing a trivialization of $TY$, the homotopy classes of oriented 2-plane fields on $Y$ are 1-1 correspondent to the framed cobordism classes of framed links in $Y$. The first assertion of the corollary follows from Theorem~\ref{mainThm}(a) and the observation that adding a right-handed full twist to $\xi$ is equivalent to decreasing $\theta(\xi)$ by 4.

It follows from the proof of Theorem~\ref{CobThm} that if $\mathfrak{t}$ be a Spin$^c$ structure on $W$ whose restriction to $Y_i$ is torsion, for $i=0,1$, then $F_{W,\mathfrak{t}}(\mathbf{x})$ is homogeneous for every homogeneous element $\xx\in \mathfrak{U}$. Since we have shown in Theorem~\ref{ThmRel} that our absolute grading $\widetilde{\text{gr}}$ refines the relative grading, in order to show that $\widetilde{\text{gr}}_0$ coincides with the absolute $\mathbb{Q}$-grading defined in~\cite{OSz3}, it suffices to verify the following two conditions:
\begin{enumerate}
    \item{(Normalization) For the standard contact 3-sphere $(S^3,\xi_{std})$, $\widetilde{\text{gr}}_0(c(\xi_{std}))=0$.}
    \item{(Cobordism formula) Let $W:Y_0 \to Y_1$ be a cobordism, and $\mathfrak{t}$ be a Spin$^c$ structure on $W$ whose restriction to $Y_i$ is torsion, $i=0,1$. Then
            \begin{equation*}
            \widetilde{\text{gr}}_0(F_{W,\mathfrak{t}}(\mathbf{x})) - \widetilde{\text{gr}}_0(\mathbf{x})= \frac{(PD~c_1(\mathfrak{t}))^2-2\chi(W)-3\sigma(W)}{4}
            \end{equation*}
        for any homogeneous $\mathbf{x} \in \mathfrak{U}$.
        }
\end{enumerate}

To prove (1), note that it follows from the fact that $(S^3,\xi_{std})$ is the almost complex boundary of the standard unit 4-ball $B^4 \subset \mathbb{C}^2$.

To prove (2), let $(X,J)$ be an almost complex 4-manifold with almost complex boundary $(Y_0,\widetilde{\text{gr}}(\mathbf{x}))$. By Theorem~\ref{CobThm}, there exists an almost complex structure $J'$ on $W$ such that both $\widetilde{\text{gr}}(\mathbf{x})$ and $\widetilde{\text{gr}}(F_{W,\mathfrak{t}}(\mathbf{x}))$ are $J'$-invariant with the complex orientation. We obtain a new almost complex 4-manifold with almost complex boundary $(X \cup_{Y_0} W,\widetilde{\text{gr}}(F_{W,\mathfrak{t}}(\mathbf{x})))$ by gluing $(X,J)$ and $(W,J')$ along $Y_0$. Recall the following theorem on the signature of 4-manifolds due to Novikov:

\begin{thm}[Novikov]
Let $M$ be an oriented 4-manifold obtained by gluing two 4-manifolds $M_1$ and $M_2$ along some components of their boundaries. Then the signature is additive:
\begin{equation*}
    \sigma(M)=\sigma(M_1)+\sigma(M_2).
\end{equation*}
\end{thm}

We therefore calculate as follows:
\begin{align*}
\widetilde{\text{gr}}_0(F_{W,\mathfrak{t}}(\mathbf{x})) - \widetilde{\text{gr}}_0(\mathbf{x}) &=\frac{\theta(\widetilde{\text{gr}}(F_{W,\mathfrak{t}}(\mathbf{x})))-\theta(\widetilde{\text{gr}}(\mathbf{x}))}{4} \\
&=\frac{(PD~c_1(W,J'))^2-2\chi(W)-3\sigma(W)}{4} \\
&=\frac{(PD~c_1(\mathfrak{t}))^2-2\chi(W)-3\sigma(W)}{4},
\end{align*}

This finishes the proof of the second assertion of the corollary.
\end{proof}

The proof of Theorem~\ref{CobThm} occupies the rest of this section. We shall follow the construction of $F_{W,\mathfrak{t}}$ given in~\cite{OSz3}.

\begin{proof}[Proof of Theorem~\ref{CobThm}]
We fix a handle decomposition of $W$, and study the 2-handle attachments and 1- and 3-handle attachments in $W$ separately.

\s\n
\textsc{case 1.} Suppose $W$ is given by 2-handle attachments along a framed link $L \subset Y_0$. Let $\Delta$ denote the two-simplex, with vertices $v_\alpha, v_\beta, v_\gamma$ labeled clockwise, and let $e_i$ denote
the edge $v_j$ to $v_k$, where $\{i,j,k\} = \{\alpha, \beta, \gamma\}$. Recall that given a Heegaard triple $(\Sigma,\bm{\alpha},\bm{\beta},\bm{\gamma})$, one can associate to it a 4-manifold

\begin{equation} \label{4mfdwCorner}
W_{\alpha,\beta,\gamma}=\frac{(\Delta\times\Sigma)\coprod(e_\alpha\times U_\alpha)\coprod(e_\beta\times U_\beta)\coprod(e_\gamma\times U_\gamma)}{(e_\alpha\times\Sigma)\sim(e_\alpha\times\bdry U_\alpha), (e_\beta\times\Sigma)\sim(e_\beta\times\bdry U_\beta), (e_\gamma\times\Sigma)\sim(e_\gamma\times\bdry U_\gamma)}
\end{equation}

\n
where $U_\alpha$ (resp. $U_\beta$, $U_\gamma$) is the handlebody determined by the $\bm\alpha$ (resp. $\bm\beta$, $\bm\gamma$) curves. Let $Y_{\alpha,\beta} = U_\alpha \cup U_\beta$, $Y_{\beta,\gamma} = U_\beta \cap U_\gamma$, and $Y_{\alpha,\gamma} = U_\alpha \cup U_\gamma$ be the 3-manifolds obtained by gluing the $\alpha$-, $\beta$- and $\gamma$-handlebodies along $\Sigma$ in pairs. After smoothing the corners, we have
\begin{equation*}
\bdry W_{\alpha,\beta,\gamma} = -Y_{\alpha,\beta} -Y_{\beta,\gamma} + Y_{\alpha,\gamma}
\end{equation*}
as oriented manifolds. See Figure~\ref{cobordism}.

\begin{figure}[ht]
    \begin{overpic}[scale=.27]{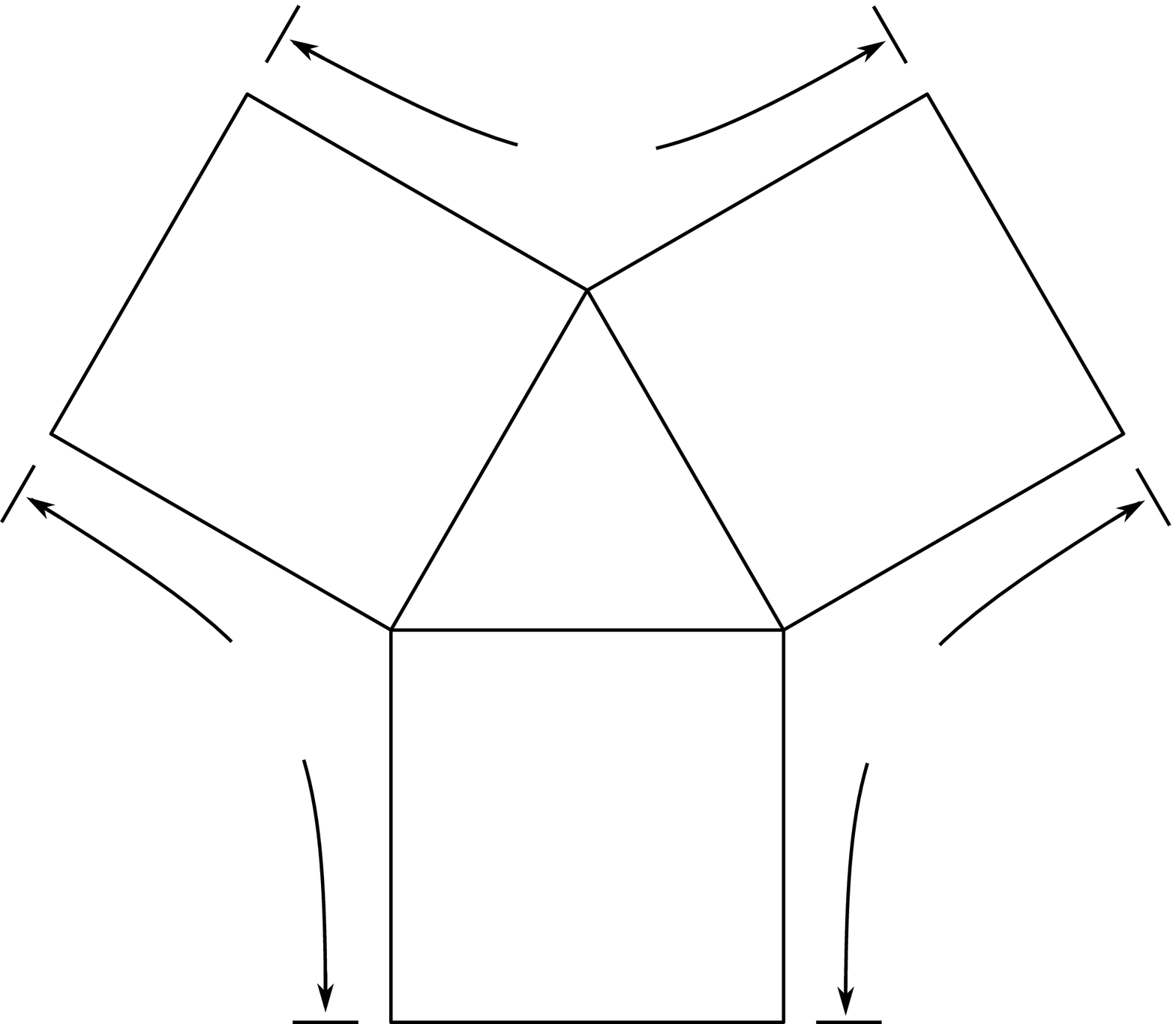}
    \put(48,29){$e_\alpha$}
    \put(34,49){$e_\beta$}
    \put(59.5,49){$e_\gamma$}
    \put(18,26){\small $Y_{\alpha,\beta}$}
    \put(73,26){\small $Y_{\alpha,\gamma}$}
    \put(45,73){\small $Y_{\beta,\gamma}$}
    \end{overpic}
    \caption{The 4-manifold $W_{\alpha,\beta,\gamma}$ associated with a Heegaard triple $(\Sigma,\bm\alpha,\bm\beta,\bm\gamma)$.}
    \label{cobordism}
\end{figure}

According to~\cite{OSz3}, if $W$ is obtained by attaching 2-handles along a framed link $L$, then there exists a triple Heegaard diagram $(\Sigma,\bm\alpha,\bm\beta,\bm\gamma,z)$ such that $Y_{\alpha,\beta}=Y_0$, $Y_{\beta,\gamma}={\#}^n(S^1\times S^2)$ for some $n \geq 1$, and $Y_{\alpha,\gamma}=Y_1$. Moreover, after filling in the boundary component $Y_{\beta,\gamma}$ by the boundary connected sum ${\#}^n_b(S^1\times B^3)$, we obtain the original cobordism $W$. Fix a Spin$^c$ structure $\mathfrak{t}$ on $W$ with $\mathfrak{s}_i=\mathfrak{t}|_{Y_i}$, $i=0,1$. Let $\Theta \in \widehat{CF}({\#}^n(S^1\times S^2))$ be the top dimensional generator and let $\mathbf{x} \in \mathbb{T}_\alpha \cap \mathbb{T}_\beta$. By definition, the image of $\xx$ under the cobordism map $F_{W,\mathfrak{t}}: \widehat{CF}(Y_0,\mathfrak{s}_0) \to \widehat{CF}(Y_1,\mathfrak{s}_1)$ is a linear combination of the generators $\yy\in\TT_\alpha\cap\TT_\gamma$ with coefficients being the count of Maslov index 0 holomorphic triangles connecting $\xx$, $\Theta$ and $\yy$. Let $\mathbf{y}$ be a generator appearing in $F_{W,\mathfrak{t}}$ with a nonzero coefficient. We prove the following claim.\\

\begin{figure}[ht]
    \begin{overpic}[scale=.3]{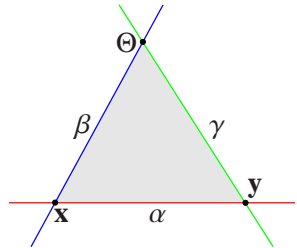}
    \put(16,9){\small$\mathbf{x}$}
    \put(37.5,68){\small$\Theta$}
    \put(83.3,19.2){\small$\mathbf{y}$}
    \put(49,9){\small$\alpha$}
    \put(23.5,40){\small$\beta$}
    \put(70,40){\small$\gamma$}
    \end{overpic}
    \caption{A holomorphic triangle on $\Sigma$ which connects $\mathbf{x}$, $\Theta$, and $\mathbf{y}$.}
    \label{holtriangle}
\end{figure}

\n
{\em Claim}: There exists an almost complex structure $J$ on $W_{\alpha,\beta,\gamma}$ such that $\widetilde{\text{gr}}(\mathbf{x}) \in \mathcal{P}(Y_0)$, $\widetilde{\text{gr}}(\Theta) \in \mathcal{P}({\#}^n(S^1 \times S^2))$, and $\widetilde{\text{gr}}(\mathbf{y}) \in \mathcal{P}(Y_1)$ are all $J$-invariant with the complex orientation.

\begin{proof}[Proof of Claim]
We first assume that $\mathbf{y}$ is the intersection point as shown in Figure~\ref{holtriangle}, which is connected to $\mathbf{x}$ and $\Theta$ by the obvious (embedded) holomorphic triangle. We begin by constructing a 2-plane field on $e_\alpha \times U_\alpha$, and note that the same construction carries over to $e_\beta \times U_\beta$ and $e_\gamma \times U_\gamma$.

For simplicity of notations, we assume $g(\Sigma)=1$, so, for instance, $\mathbf{x} \in \mathbb{T}_\alpha \cap \mathbb{T}_\beta$ is just one point instead of a $g$-tuple of points. The same argument applies to Heegaard surfaces of arbitrary genus without difficulty. Let $V_\alpha$ be the gradient flow on $U_\alpha$ compatible with the $\alpha$-curve so that it is pointing out along $\bdry U_\alpha$. Let $p \in U_\alpha$ be the index 1 critical point of $V_\alpha$ and $w \in U_\alpha$ be the index 0 critical point of $V_\alpha$. Identify the edge $e_\alpha \subset \Delta$ with the subarc of the $\alpha$-curve from $\mathbf{x}$ to $\mathbf{y}$, which is an edge of the holomorphic triangle, such that $v_\gamma$ is identified with $\mathbf{x}$ and $v_\beta$ is identified with $\mathbf{y}$. Abusing notations, we shall not distinguish a point on $e_\alpha$ and the corresponding point on the $\alpha$-curve under the above identification. For any $q \in e_\alpha$, let $\gamma_0$ and $\gamma_1$ be the gradient trajectories which connect $w$ to $z$ and $p$ to $q$ respectively. Let $N(\gamma_i)$ be a tubular neighborhood of $\gamma_i$ as depicted in Figure~\ref{alphahdby}, for $i=0,1$. By restricting the construction of the absolute grading in Section~\ref{Defgr} to $U_\alpha$, we obtain a non-vanishing vector field $V'_{\alpha,q}$ on $U_\alpha$ which depends on the choice of $q \in e_\alpha$ as depicted in Figure~\ref{halfmod}. Thus we have constructed a 2-plane field $\xi_\alpha(q,x) = (V'_{\alpha,q}(x))^{\bot_3}$ on $e_\alpha \times U_\alpha$, for any $q \in e_\alpha$ and $x \in U_\alpha$. Here $\bot_3$ denotes taking the orthogonal complement of $V'_{\alpha,q}$ within $TU_\alpha$.

\begin{figure}[ht]
    \begin{overpic}[scale=.6]{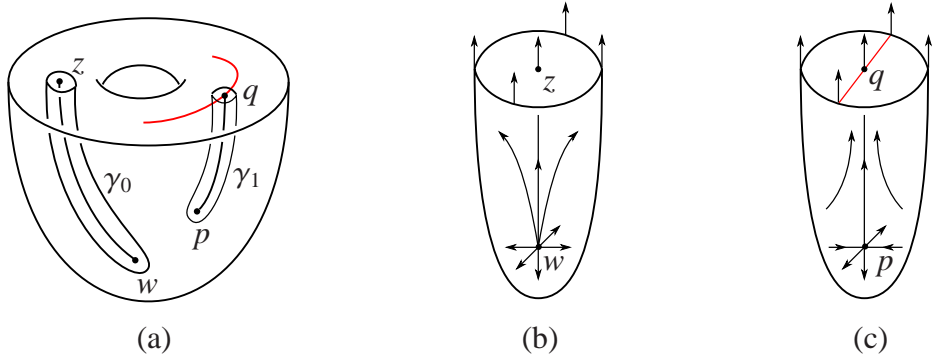}
    \put(10.5,12.5){$\gamma_0$}
    \put(24.5,13){$\gamma_1$}
    \put(14,1){$w$}
    \put(7,25){$z$}
    \put(20,6.5){$p$}
    \put(25.5,22){$q$}
    \put(58,3.5){$w$}
    \put(58,23){$z$}
    \put(94,3.5){$p$}
    \put(93.3,23.7){$q$}
    \put(14,-5){(a)}
    \put(55.7,-5){(b)}
    \put(91.6,-5){(c)}
    \end{overpic}
    \vspace{6mm}
    \caption{(a) The $\alpha$-handlebody $U_\alpha$ and tubular neighborhoods of the gradient trajectories $\gamma_0$ and $\gamma_1$. (b) The gradient vector field $V_\alpha|_{N(\gamma_0)}$ in $N(\gamma_0)$. (c) The gradient vector field $V_\alpha|_{N(\gamma_1)}$ in $N(\gamma_1)$.}
    \label{alphahdby}
\end{figure}

\begin{figure}[ht]
    \begin{overpic}[scale=.4]{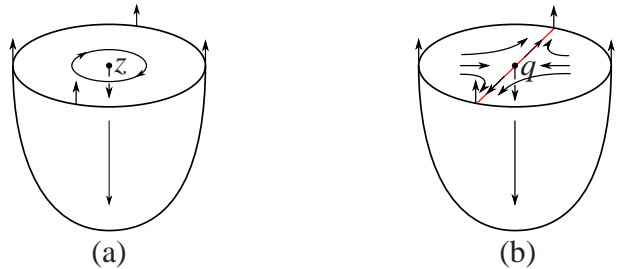}
    \put(17.3,26.5){$z$}
    \put(84.5,26){$q$}
    \put(13.5,-5){(a)}
    \put(81,-5){(b)}
    \end{overpic}
    \vspace{3.5mm}
    \caption{(a) The non-vanishing vector field $V'_{\alpha,q}$ restricted to $N(\gamma_0)$. (b) The non-vanishing vector field $V'_{\alpha,q}$ restricted to $N(\gamma_1)$.}
    \label{halfmod}
\end{figure}

Similarly one constructs 2-plane fields $\xi_\beta$ and $\xi_\gamma$ on $e_\beta \times U_\beta$ and $e_\gamma \times U_\gamma$, respectively. However, note that the boundary component $Y_{\alpha,\beta} = (v_\gamma \times U_\alpha) \cup (v_\gamma \times U_\beta)$ of $W_{\alpha,\beta,\gamma}$ is a 3-manifold with corners, and the 2-plane fields $\xi_\alpha$ and $\xi_\beta$ do not agree along $v_\gamma \times \Sigma$ because they are tangent to the $\alpha$- and $\beta$-handlebodies which intersect each other in an angle. To smooth the corners, we replace the triangle $\Delta$ in (\ref{4mfdwCorner}) with a hexagon $H$ with right corners and attach $\alpha$, $\beta$, and $\gamma$ handles accordingly as depicted in Figure~\ref{smoothing}. In this way we obtain a smooth cobordism which we still denote by $W_{\alpha,\beta,\gamma}: Y_0 \coprod (S^1 \times S^2) \to Y_1$, where $Y_0 = (v_\gamma\times U_\alpha) \cup ([0,1]\times\Sigma) \cup (v_\gamma\times U_\beta)$, $Y_1 = (v_\beta\times U_\alpha) \cup ([0,1]\times\Sigma) \cup (v_\beta\times U_\gamma)$, and $S^1 \times S^2 = (v_\alpha\times U_\beta) \cup ([0,1]\times\Sigma) \cup (v_\alpha\times U_\gamma)$ are smooth 3-manifolds. We construct a 2-plane field $\xi$ on $(e_\alpha \times U_\alpha) \cup (e_\beta \times U_\beta) \cup (e_\gamma \times U_\gamma) \cup \bdry W_{\alpha,\beta,\gamma}$ by extending $\xi_\alpha$, $\xi_\beta$, and $\xi_\gamma$ to the three copies of $[0,1]\times\Sigma$ such that it is translation invariant in the $[0,1]$-direction on each copy. By construction, it is easy to see that $\xi|_{Y_0} \simeq \widetilde{\text{gr}}(\mathbf{x})$, $\xi|_{S^1 \times S^2} \simeq \widetilde{\text{gr}}(\Theta)$, and $\xi|_{Y_1} \simeq \widetilde{\text{gr}}(\mathbf{y})$.

\begin{figure}[ht]
    \begin{overpic}[scale=.32]{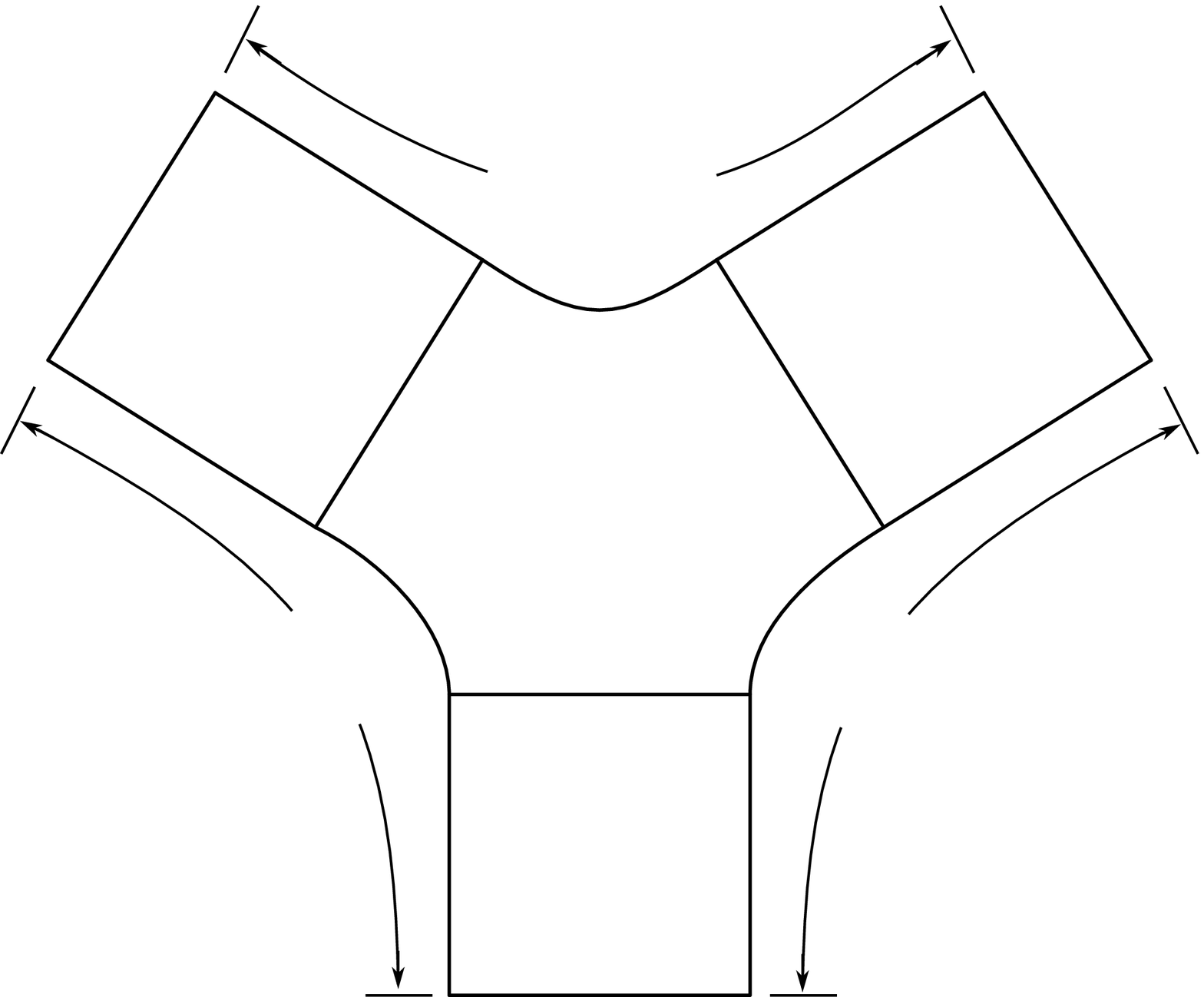}
    \put(48,20){$e_\alpha$}
    \put(26.5,50){$e_\beta$}
    \put(68,50){$e_\gamma$}
    \put(47,40){$H$}
    \put(25,26){$Y_0$}
    \put(70,25.7){$Y_1$}
    \put(42,67.5){\small$S^1 \times S^2$}
    \end{overpic}
    \caption{The smooth cobordism $W_{\alpha,\beta,\gamma}: Y_0 \coprod (S^1 \times S^2) \to Y_1$.}
    \label{smoothing}
\end{figure}

Let $D_1 \subset \Sigma$ be a closed neighborhood of $z$, and $D_2 \subset \Sigma$ be a closed neighborhood of the holomorphic triangle so that the non-vanishing vector field $V'_{i,q}$ is transverse to $T\Sigma$ along $\Sigma\setminus(D_1 \cup D_2)$ for any $i \in \{\alpha,\beta,\gamma\}$, $q \in \bdry\Delta$. We extend $\xi$ to the metric closure of $H\times(\Sigma\setminus(D_1 \cup D_2))$ by letting $\xi(x,y)=T_y\Sigma$ for any $x \in H$, and $y \in \Sigma\setminus(D_1 \cup D_2)$. We construct an almost complex structure $J$ on a subset of $W_{\alpha,\beta,\gamma}$ by asking $\xi$ and $\xi^{\bot_4}$ to be complex line bundles, where $\bot_4$ denotes taking the orthogonal complement in $TW_{\alpha,\beta,\gamma}$. In fact $J$ is defined everywhere on $W_{\alpha,\beta,\gamma}$ except finitely many 4-balls (with corners), namely, $H \times D_1$ and $H \times D_2$. To extend $J$ to the whole $W_{\alpha,\beta,\gamma}$, we round the corners of $\bdry(H \times D_i)$, $i=1,2$, in two steps.\\

\textit{Step 1}. To round the corners of $\bdry H \times D_1$ and $\bdry H \times D_2$ near each vertex of $H$, we first construct a local model for corner-rounding as follows.

Let $(x_1,y_1,x_2,y_2)$ be coordinates on $\mathbb{R}^2 \times \mathbb{R}^2$ with the Euclidean metric. Consider a non-singular vector field
\begin{equation*}
v(x_1,y_1,x_2,y_2)=f(x_2,y_2)\frac{\bdry}{\bdry y_1}+g(x_2,y_2)\frac{\bdry}{\bdry x_2}+h(x_2,y_2)\frac{\bdry}{\bdry y_2}
\end{equation*}
on $\mathbb{R}^2\times\mathbb{R}^2$, namely, $f$, $g$ and $h$ cannot be simultaneously zero. Observe that $v$ is everywhere tangent to $\mathbb{R}^3 \simeq \{(x_1,y_1,x_2,y_2)~|~x_1=\text{constant}\}$. Define $v^{\bot_3}$ to be the pointwise orthogonal complement to $v$ inside $\mathbb{R}^3 \simeq \{(x_1,y_1,x_2,y_2)~|~x_1=\text{constant}\}$. Let $J$ be an almost complex structure on $\mathbb{R}^2\times\mathbb{R}^2$ which preserves the metric and satisfies:
\begin{itemize}
    \item{$J(\frac{\bdry}{\bdry x_1})=\frac{v}{||v||}$,}
    \item{$J(v^{\bot_3})=v^{\bot_3}$.}
\end{itemize}

Let $\mathcal{L}=\{(x_1,0)~|~x_1 \geq 0\} \cup \{(0,y_1)~|~y_1 \geq 0\} \subset \mathbb{R}^2$ be a $L$-shaped broken line with a corner at the origin. We round the corner of $\mathcal{L}$ by considering
\begin{equation*}
\mathcal{L}_r=\{(x_1,0)~|~x_1 \geq 1\} \cup \{(0,y_1)~|~y_1 \geq 1\} \cup \{(x_1-1)^2+(y_1-1)^2=1~|~0 \leq x_1 \leq 1,0 \leq y_1 \leq 1\}.
\end{equation*}

Consider the smooth submanifold $\bar{\mathcal{L}}=\mathcal{L}_r \times \mathbb{R}^2$ in $\mathbb{R}^2\times\mathbb{R}^2$. We compute the complex line distribution $T\bar{\mathcal{L}} \cap J(T\bar{\mathcal{L}})$ on $T\bar{\mathcal{L}}$ with respect to $J$. To do so, identify $\bar{\mathcal{L}}$ with $(-\infty,\infty)\times\mathbb{R}^2$ such that $\{(0,y_1)~|~y_1 \geq 1\}$ is identified with $(-\infty,0]\times\mathbb{R}^2$, $\{(x_1,0)~|~x_1 \geq 1\}$ is identified with $[1,\infty)\times\mathbb{R}^2$, and $\{(x_1-1)^2+(y_1-1)^2=1~|~0 \leq x_1 \leq 1,0 \leq y_1 \leq 1\}$ is identified with $[0,1]\times\mathbb{R}^2$. Let $\phi_t: \mathbb{R}^3 \to \mathbb{R}^3$ be the clockwise rotation about the $x$-axis by $\chi(t)\pi/2$, where $(x,y,z)$ are coordinates on $\mathbb{R}^3$ and
\begin{equation*}
\chi(t)=
    \begin{cases}
    0 & \text{if } t \leq 0,\\
    t & \text{if } 0\leq t \leq 1,\\
    1 & \text{if } t \geq 1.
    \end{cases}
\end{equation*}

\begin{lemma} \label{techclxtangency}
The 2-plane field $T\bar{\mathcal{L}} \cap J(T\bar{\mathcal{L}})$ on $\bar{\mathcal{L}} \simeq (-\infty,\infty)\times\mathbb{R}^2$ is the orthogonal complement of the non-singular vector field $\mu(t,x_2,y_2)=\phi_t(v(x_2,y_2))$.
\end{lemma}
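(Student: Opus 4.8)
The plan is to reduce the statement to an identification of $J\nu$, where $\nu$ is the unit normal of the hypersurface $\bar{\mathcal{L}}$, and then carry out that identification by elementary linear algebra. Recall the standard fact that if $M$ is an oriented real hypersurface in an almost complex Riemannian $4$-manifold $(W,J)$ with $J$ metric-compatible, then $J$ is skew-adjoint, so the unit normal $\nu$ satisfies $\langle\nu,J\nu\rangle=0$; hence $J\nu$ is tangent to $M$, and the complex tangency $TM\cap J(TM)=\nu^{\bot}\cap(J\nu)^{\bot}$ is precisely the orthogonal complement of $J\nu$ inside $TM$. Applied to $M=\bar{\mathcal{L}}$, it therefore suffices to prove that $J\nu$ is a positive multiple of $\mu(t,x_2,y_2)=\phi_t(v(x_2,y_2))$.

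First I would write down $\nu$ explicitly. Since $\bar{\mathcal{L}}=\mathcal{L}_r\times\R^2$, the unit normal lies in the $(x_1,y_1)$-plane and is obtained by rotating the unit tangent $\tau$ of $\mathcal{L}_r$ by $+\pi/2$; choosing signs continuously, $\nu=\bdry_{x_1}$ on the ray $\{x_1=0,\,y_1\geq 1\}$ ($t\leq 0$), $\nu=\cos(t\pi/2)\,\bdry_{x_1}+\sin(t\pi/2)\,\bdry_{y_1}$ on the quarter circle ($0\leq t\leq 1$), and $\nu=\bdry_{y_1}$ on the ray $\{y_1=0,\,x_1\geq 1\}$ ($t\geq 1$). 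Under $\bar{\mathcal{L}}\simeq(-\infty,\infty)\times\R^2$, the frame $\{\tau,\bdry_{x_2},\bdry_{y_2}\}$ trivializes $T\bar{\mathcal{L}}$, and this is the copy of $\R^3$ on which $\phi_t$ acts, with $\tau$ in the role of the $x$-axis; on the region $t\leq 0$, where $\tau=-\bdry_{y_1}$, the field $v$ reads $(-f,g,h)$ in this frame. Then I would compute $J\nu$ from the defining properties of $J$ alone: $J(\bdry_{x_1})=v/\|v\|$, $J(v)=-\|v\|\,\bdry_{x_1}$, and $J$ preserves $v^{\bot_3}$ with its given orientation. Decomposing $\bdry_{y_1}=\tfrac{f}{\|v\|^2}v+w$ with $w\in v^{\bot_3}$ and using $Jw=\tfrac{1}{\|v\|}\,v\times w=\tfrac{1}{\|v\|}(h\,\bdry_{x_2}-g\,\bdry_{y_2})$ yields $J(\bdry_{y_1})=\tfrac{1}{\|v\|}\bigl(-f\,\bdry_{x_1}+h\,\bdry_{x_2}-g\,\bdry_{y_2}\bigr)$.

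Combining this with $J(\bdry_{x_1})=v/\|v\|$ and the formula for $\nu$, the $(x_1,y_1)$-part of $J\nu$ collapses to a multiple of $\tau$, and one gets
\begin{equation*}
J\nu=\frac{1}{\|v\|}\Bigl(-f\,\tau+\bigl(g\cos(t\pi/2)+h\sin(t\pi/2)\bigr)\bdry_{x_2}+\bigl(h\cos(t\pi/2)-g\sin(t\pi/2)\bigr)\bdry_{y_2}\Bigr),
\end{equation*}
which in the frame $\{\tau,\bdry_{x_2},\bdry_{y_2}\}$ is exactly $\tfrac{1}{\|v\|}$ times the clockwise rotation of $v=(-f,g,h)$ about the $\tau$-axis through the angle $\chi(t)\pi/2$, i.e.\ $\tfrac{1}{\|v\|}\phi_t(v)=\tfrac{1}{\|v\|}\mu$ (on the two flat ends this specializes to $J\nu=v/\|v\|$ and $J\nu=\tfrac{1}{\|v\|}(-f\,\bdry_{x_1}+h\,\bdry_{x_2}-g\,\bdry_{y_2})$, matching $\phi_0=\mathrm{id}$ and $\phi_1$). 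Hence $T\bar{\mathcal{L}}\cap J(T\bar{\mathcal{L}})=\mu^{\bot}$. The only real obstacle is keeping the orientation conventions consistent — the rotation direction of $\phi_t$, the sign of $J$ on $v^{\bot_3}$, and the sign of $\nu$ — so that the explicit $J\nu$ comes out as $\phi_t(v)$ on the nose rather than a reflection or inverse of it; the geometric content, that rounding the corner replaces the characteristic direction $v$ by its $\phi_t$-rotate, is forced once the reduction in the first paragraph is in place.
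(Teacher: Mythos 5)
Your argument is correct and follows essentially the same route as the paper's: both reduce the lemma to identifying $J$ applied to the normal of $\bar{\mathcal{L}}$ (via the standard fact that the complex tangency of a hypersurface is the orthogonal complement of $J\nu$) and then compute $J(\partial_{y_1})$ from the metric-compatibility conditions — you via the orthogonal decomposition of $\partial_{y_1}$ and the cross-product description of $J$ on $v^{\bot_3}$, the paper by solving for $J$ on an explicit basis of $v^{\bot_3}$ — arriving at the identical formula $J(\partial_{y_1})=\tfrac{1}{\|v\|}\bigl(-f\partial_{x_1}+h\partial_{x_2}-g\partial_{y_2}\bigr)$. The only cosmetic difference is that you use the genuine unit normal $\cos(t\pi/2)\partial_{x_1}+\sin(t\pi/2)\partial_{y_1}$ of the rounded corner where the paper uses the linear interpolation $t\partial_{y_1}+(1-t)\partial_{x_1}$; these agree up to reparametrization and positive rescaling, so both give the same conclusion.
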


\begin{proof}[Proof of Lemma~\ref{techclxtangency}]
We first compute $J(\frac{\bdry}{\bdry y_1})$ as follows. Note that
\begin{equation*}
v^{\bot_3}=
    \begin{cases}
    span\{\frac{\bdry}{\bdry x_2},\frac{\bdry}{\bdry y_2}\} & \text{if } g=h=0,\\
    span\{g\frac{\bdry}{\bdry y_2}-h\frac{\bdry}{\bdry x_2},\frac{\bdry}{\bdry y_1}-\frac{fg}{\lambda^2}\frac{\bdry}{\bdry x_2}-\frac{fh}{\lambda^2}\frac{\bdry}{\bdry y_2}\} & \text{otherwise}.
    \end{cases}
\end{equation*}
where $\lambda=\sqrt{g^2+h^2}$. Since we assume that $J$ preserves the Euclidean metric, we have
\begin{equation} \label{acsComputation}
    \begin{cases}
    J(\frac{\bdry}{\bdry x_2})=\frac{\bdry}{\bdry y_2} & \text{if } g=h=0,\\
    J(g\frac{\bdry}{\bdry y_2}-h\frac{\bdry}{\bdry x_2})=\frac{\lambda^2}{\sqrt{f^2+\lambda^2}}(\frac{\bdry}{\bdry y_1}-\frac{fg}{\lambda^2}\frac{\bdry}{\bdry x_2}-\frac{fh}{\lambda^2}\frac{\bdry}{\bdry y_2}) & \text{otherwise}.
    \end{cases}
\end{equation}

It follows from (\ref{acsComputation}) and the equation $J(\frac{\bdry}{\bdry x_1})=\frac{v}{||v||}$ that
\begin{equation*}
    J\Big(\frac{\bdry}{\bdry y_1}\Big)=\frac{1}{\sqrt{f^2+\lambda^2}}\Big(-f\frac{\bdry}{\bdry x_1}-g\frac{\bdry}{\bdry y_2}+h\frac{\bdry}{\bdry x_2}\Big).
\end{equation*}

It is easy to see that $T\bar{\mathcal{L}} \cap J(T\bar{\mathcal{L}})$ restricted to $\{t\}\times\mathbb{R}^2$, $t \geq 1$, is the orthogonal complement of $J(\frac{\bdry}{\bdry y_1})=\mu(1,\cdot)$ up to positive rescaling within $T\bar{\mathcal{L}}$. Moreover observe that $T\bar{\mathcal{L}} \cap J(T\bar{\mathcal{L}})$ restricted to $\{t\}\times\mathbb{R}^2$, for $0 \leq t \leq 1$, is the orthogonal complement of $J(t\frac{\bdry}{\bdry y_1}+(1-t)\frac{\bdry}{\bdry x_1})$, which is exactly $\mu(t,\cdot)$ up to positive rescaling.
\end{proof}

Without loss of generality, let $q$ be a vertex of $H$ whose adjacent edges are $e_\alpha$ and $[0,1]$, where $[0,1]$ is an edge of $H$ connecting $\alpha$- and $\beta$-handlebodies. Take a small neighborhood $N(q)$ of $q$ in $H$. Identify $N(q)$ with a small neighborhood of the origin in $\mathbb{R}^2$ restricted to the first quadrant such that $e_\alpha\cup[0,1]$ is identified with $\mathcal{L}$. We can further assume that $J$ is defined on $N(q) \times D_i$ by taking $N(q)$ sufficiently small, and that it is invariant under translation in any direction tangent to $N(q)$. Hence we can apply Lemma~\ref{techclxtangency} to compute the complex line distribution on $\mathcal{L}_r \times D_i \subset N(q) \times D_i$, $i=1,2$, with respect to $J$. By rounding all the corners of $H$ and applying Lemma~\ref{techclxtangency}, we conclude that:

\begin{enumerate}
    \item{The complex line distribution $T(\bdry H \times D_1) \cap JT(\bdry H \times D_1)$ on $\bdry H \times D_1$ is, up to homotopy relative to the boundary, the orthogonal complement of the non-singular vector field $v_1$, where $v_1|_{\{p\} \times D_1}$ is shown on Figure~\ref{CornerRounding}(a). In particular $v_1$ is defined to be invariant in the direction of $\bdry H$.}
    \item{Let $\theta \in [0,2\pi)$ be the coordinate on $\bdry H$ with the boundary orientation and $\psi: \bdry H \times D_2 \to \bdry H \times D_2$ be a diffeomorphism defined by $\psi(\theta,z)=(\theta,e^{i\theta}z)$. The complex line distribution $T(\bdry H \times D_2) \cap JT(\bdry H \times D_2)$ on $\bdry H \times D_2$ is, up to homotopy relative to the boundary, the orthogonal complement of the non-singular vector field $v_2=\psi_\ast(v'_2)$, where $v'_2$ is invariant in the direction of $\bdry H$ and its restriction to $p \times D_2$, $p \in \bdry H$, is shown on Figure~\ref{CornerRounding}(b).}
\end{enumerate}

\begin{figure}[ht]
    \begin{overpic}[scale=.47]{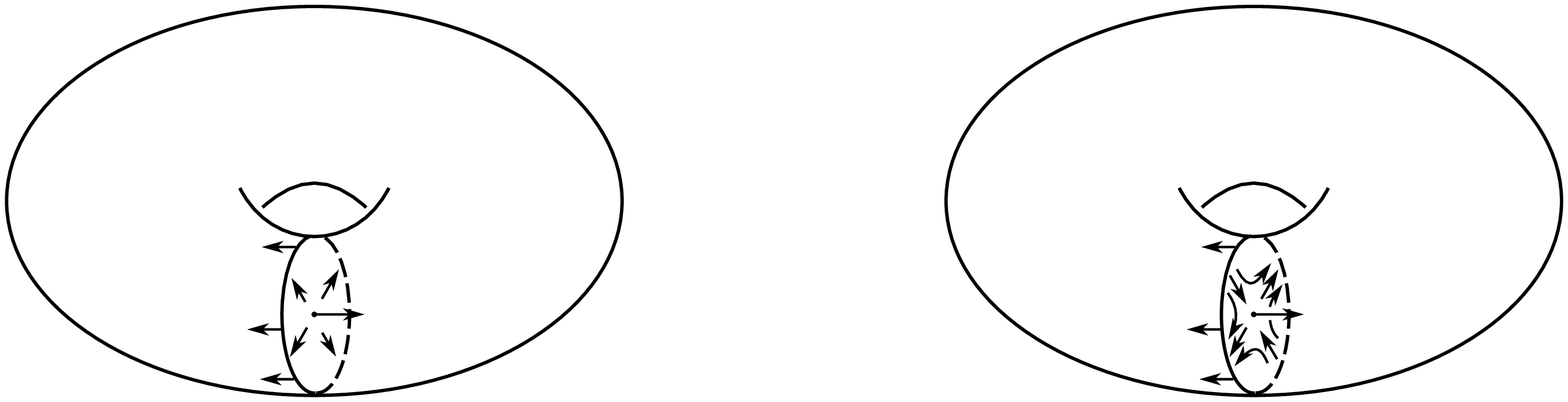}
    \put(14,-5){$\bdry H \times D_1$}
    \put(74,-5){$\bdry H \times D_2$}
    \put(18,-10){(a)}
    \put(78,-10){(b)}
    \end{overpic}
    \vspace{1cm}
    \caption{}
    \label{CornerRounding}
\end{figure}

\textit{Step 2}. Now we round the corners of $\bdry(H \times D_i)=(\bdry H \times D_i) \cup (H \times \bdry D_i)$, which is the union of two solid tori meeting each other orthogonally. Note that the 2-plane field $T(H \times \bdry D_i) \cap JT(H \times \bdry D_i)$ on $H \times \bdry D_i$ is everywhere tangent to $H$ by our choice of $D_i \subset \Sigma$, for $i=1,2$. Abusing notations, we still denote by $\bdry(H \times D_i)$ the smooth 3-sphere obtained by rounding the corners in the standard way. Let $\xi_i$ denote $T(\bdry(H \times D_i)) \cap JT(\bdry(H \times D_i))$, for $i=1,2$. So $\xi_1$ and $\xi_2$ are oriented 2-plane fields. Using the Pontryagin-Thom construction, we see that $\xi_1$ is homotopic to the negative standard contact structure on $S^3$, while $\xi_2$ is homotopic to the positive standard contact structure on $S^3$. Embed $H \times D_i = B^4 \subset \mathbb{C}^2$ such that $H$ and $D_i$ are contained in orthogonal complex planes respectively. Let
\begin{equation*}
    J_0=\left(
    \begin{array}{ccc}
    i & 0 \\
    0 & i
    \end{array} \right),~~~~~~~~~~~~~
    J'_0=\left(
    \begin{array}{ccc}
    i & 0 \\
    0 & -i
    \end{array} \right)
\end{equation*}
be complex structures on $\mathbb{C}^2$. Then it is standard to check that $\xi_1 \simeq TS^3 \cap J'_0 TS^3$ and $\xi_2 \simeq TS^3 \cap J_0 TS^3$ as oriented 2-plane fields, where $S^3 = \bdry B^4 \subset \mathbb{C}^2$. Hence we can extend $J$ to the whole $W_{\alpha,\beta,\gamma}$ satisfying all the desired properties.

Now we turn to the general case. Let $\mathbf{y}' \in \mathbb{T}_\alpha \cap \mathbb{T}_\gamma$ be another intersection point in $F_{W,\mathfrak{t}}$, i.e. there exists a holomorphic triangle $\psi' \in \pi_2(\mathbf{x},\Theta,\mathbf{y}')$ such that the Maslov index $\mu(\psi')=0$. Let $\mathbf{y} \in F_{W,\mathfrak{t}}(\mathbf{x})$ be the intersection point as shown in Figure~\ref{holtriangle} and $\psi \in \pi_2(\mathbf{x},\Theta,\mathbf{y})$ be the obvious holomorphic triangle of Maslov index $\mu(\psi)=0$. Since $\psi$ and $\psi'$ induces the same Spin$^c$ structure $\mathfrak{t}$ on $W$, we have $\psi'=\psi+\phi_1+\phi_2+\phi_3$ for $\phi_1\in\pi_2(\mathbf{x},\mathbf{x})$, $\phi_2\in\pi_2(\Theta,\Theta)$, and $\phi_3\in\pi_2(\mathbf{y},\mathbf{y}')$. This implies
\begin{equation*}
    \mu(\psi')=\mu(\psi)+\mu(\phi_1)+\mu(\phi_2)+\mu(\phi_3).
\end{equation*}
Therefore
\begin{equation*}
    \mu(\phi_1)-2n_z(\phi_1)=-(\mu(\phi_3)-2n_z(\phi_3)),
\end{equation*}
because $\mu(\psi)=\mu(\psi')=n_z(\psi)=n_z(\psi')=\mu(\phi_2)-2n_z(\phi_2)=0$. Since we have shown that there exists an almost complex structure $J$ on $W_{\alpha,\beta,\gamma}$ such that $\widetilde{\textrm{gr}}(\mathbf{x}) \in \mathcal{P}(Y_0)$, $\widetilde{\textrm{gr}}(\mathbf{y}) \in \mathcal{P}(Y_1)$ and $\widetilde{\textrm{gr}}(\Theta) \in \mathcal{P}(\#^n(S^1 \times S^2))$ are all $J$-invariant with the complex orientation, it is easy to show that there exists another almost complex structure $J'$ on $W_{\alpha,\beta,\gamma}$ such that $\widetilde{\textrm{gr}}(\mathbf{x})+\mu(\phi_1)-2n_z(\phi_1)$, $\widetilde{\textrm{gr}}(\mathbf{y})-(\mu(\phi_3)-2n_z(\phi_3))$, and $\widetilde{\textrm{gr}}(\Theta)$ are all $J'$-invariant with the complex orientation. Here we are using the $\Z$-action as explained in Remark~\ref{RmkAct}. Now it remains to observe that $\widetilde{\textrm{gr}}(\mathbf{x})=\widetilde{\textrm{gr}}(\mathbf{x})+\mu(\phi_1) -2n_z(\phi_1)\in \mathcal{P}(Y_0)$ since $\mu(\phi_1)-2n_z(\phi_1)$ is an integral multiple of the divisibility of $c_1(\widetilde{\textrm{gr}}(\mathbf{x})) \in H^2(Y_0;\mathbb{Z})$, and that $$\widetilde{\textrm{gr}}(\mathbf{y}')=\widetilde{\textrm{gr}}(\mathbf{y})-\textrm{gr}(\yy,\yy')=\widetilde{\textrm{gr}}(\mathbf{y})-(\mu(\phi_3)-2n_z(\phi_3)).$$
\end{proof}

It remains to show that $J$ can be extended to $W$. Recall that $W = W_{\alpha,\beta,\gamma} \cup {\#}^n_b (S^1 \times B^3)$. We need to show that there exists an almost complex structure on ${\#}^n_b (S^1 \times B^3)$ such that its restriction to ${\#}^n (S^1 \times S^2) = \bdry ({\#}^n_b (S^1 \times B^3))$ coincides with $J|_{{\#}^n (S^1 \times S^2)}$. Note that $[\Theta] \in \widehat{HF}(-{{\#}}^n (S^1 \times S^2))$ defines the contact invariant of the standard contact structure on ${\#}^n (S^1 \times S^2)$, which is holomorphically fillable. Hence the conclusion follows immediately from Theorem~\ref{mainThm}(b). We finish the proof of Case 1.

\s\n
\textsc{Case 2}. Suppose $W$ is given by attaching 1- and 3-handles. By duality, it suffices to consider the case that $W$ consists of 1-handle attachments. Let $(\Sigma,\bm{\alpha},\bm{\beta},z)$ be a Heegaard diagram of $Y_0$ and $(\Sigma_0,\bm{\alpha}_0,\bm{\beta}_0,z_0)$ a standard Heegaard diagram of ${\#}^n (S^1 \times S^2)$. We obtain a Heegaard diagram $(\Sigma',\bm{\alpha}',\bm{\beta}',z') = (\Sigma,\bm{\alpha},\bm{\beta},z) {\#} (\Sigma_0,\bm{\alpha}_0,\bm{\beta}_0,z_0)$ of $Y_1$. There is an associated map between the Heegaard Floer homology groups
\begin{equation*}
    F_{W,\mathfrak{t}}: \widehat{CF}(\Sigma,\bm{\alpha},\bm{\beta},z,\mathfrak{t}|_{Y_0}) \to \widehat{CF}(\Sigma',\bm{\alpha}',\bm{\beta}',z',\mathfrak{t}|_{Y_1})
\end{equation*}
which is induced by $\displaystyle{F_{W,\mathfrak{t}}(\mathbf{x}) = \mathbf{x} \otimes \Theta}$, where $\mathbf{x} \in \mathbb{T}_\alpha \cap \mathbb{T}_\beta$ is a generator in the Spin$^c$ structure $\mathfrak{t}|_{Y_0}$, and $\Theta \in \widehat{CF}({\#}^n (S^1 \times S^2))$ is the top dimensional generator. Now the existence of an almost complex structure $J$ on $W$ with desired properties follows from Theorem~\ref{mainThm}(b) and the fact that the standard contact structure on ${\#}^n (S^1 \times S^2)$ is fillable by $({\#}^n_b (S^1 \times B^3),J')$ for some almost complex structure $J'$. So Case 2 is also proved.
\end{proof}

\section{The invariance under Heegaard moves}

Our aim for this section is to show that the absolute grading is an invariant of the 3-manifold. That means that if we have two different Heegaard diagrams for the same 3-manifold, then the absolute grading is preserved under the isomorphism between the Floer homologies defined in \cite{OSz2}. It is shown in \cite{OSz2} that any two Heegaard diagrams for the same manifold differ by a sequence of Heegaard moves, i.e. isotopies, handleslides, stabilizations and destabilizations. Every Heegaard move gives rise to a chain map between the Floer complexes, which induces an isomorphism in homology. It is easy to see that these chain maps take homogeneous elements to homogeneous elements. We will show the following theorem.

\begin{thm}\label{inv}
 Let $(\Sigma,\ba,\bb,z)$ be a Heegaard diagram for $Y$ and $(\Sigma',\ba',\bb',z')$ a Heegaard diagram obtained by a Heegaard move from $(\Sigma,\ba,\bb,z)$. Let $\Gamma:~\widehat{CF}(\Sigma,\ba,\bb,z)\to\widehat{CF}(\Sigma',\ba',\bb',z')$ be the chain map defined in~\cite{OSz2}. If $\xx\in\TT_{\ba}\cap\TT_{\bb}$, then $\widetilde{\text{gr}}(\xx)=\widetilde{\text{gr}}(\Gamma(\xx))$.
\end{thm}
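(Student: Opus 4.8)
The plan is to verify the claimed equality separately for the three kinds of Heegaard moves — isotopies, handleslides, and (de)stabilizations — which by~\cite{OSz2} generate all moves between diagrams of a fixed $Y$. The guiding principle is that each of these moves is associated to the trivial cobordism $Y\times[0,1]$, so that most of the work has already been done in the proof of Theorem~\ref{CobThm}, once one records the following elementary fact about the relation $\sim_{Y\times[0,1]}$: \emph{for $\xi_0,\xi_1\in\PP(Y)$ one has $\xi_0\sim_{Y\times[0,1]}\xi_1$ if and only if $\xi_0$ and $\xi_1$ are homotopic as oriented $2$-plane fields.} For the ``only if'' direction I would note that for any almost complex structure $J$ on $Y\times[0,1]$ and any $t$ the real $3$-plane $T(Y\times\{t\})$ meets its $J$-image in a complex line, so $t\mapsto T(Y\times\{t\})\cap J\,T(Y\times\{t\})$ is a path of oriented $2$-plane fields from $[\xi_0]$ to $[\xi_1]$; conversely, given a path $\xi_t$ between them, write $TY=\xi_t\oplus\R v_t$ with $v_t$ a positively oriented unit normal, and let $J$ on $Y\times[0,1]$ be the positive quarter rotation on each $\xi_t$ together with $\partial_t\mapsto v_t\mapsto-\partial_t$; then the complex tangencies along $Y\times\{t\}$ are exactly $\xi_t$.

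For a \emph{handleslide} carrying $\bb$ to $\boldsymbol\gamma$, the map $\Gamma$ is the holomorphic triangle map of the triple $(\Sigma,\ba,\bb,\boldsymbol\gamma,z)$, in which $Y_{\alpha,\beta}=Y_{\alpha,\gamma}=Y$, $Y_{\beta,\gamma}={\#}^g(S^1\times S^2)$, and the second input is the top generator $\Theta\in\widehat{CF}({\#}^g(S^1\times S^2))$; filling in the boundary component $Y_{\beta,\gamma}$ by ${\#}^g_b(S^1\times B^3)$ produces the product cobordism $W=Y\times[0,1]$. This is precisely the situation treated in Case~1 of the proof of Theorem~\ref{CobThm}, which for each generator $\yy$ appearing in $\Gamma(\xx)$ produces an almost complex structure $J$ on $W$ with respect to which $\widetilde{\text{gr}}(\xx)$, $\widetilde{\text{gr}}(\Theta)$ and $\widetilde{\text{gr}}(\yy)$ are all $J$-invariant with the complex orientation; thus $\widetilde{\text{gr}}(\xx)\sim_{Y\times[0,1]}\widetilde{\text{gr}}(\yy)$, and the fact above gives $\widetilde{\text{gr}}(\yy)=\widetilde{\text{gr}}(\xx)$, hence $\widetilde{\text{gr}}(\Gamma(\xx))=\widetilde{\text{gr}}(\xx)$. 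I would treat an \emph{isotopy} of the attaching curves in the same spirit: a transversality-preserving isotopy induces the identity on generators, and $\widetilde{\text{gr}}$ of a generator is visibly unchanged since it is built only from a gradient-like vector field and the local models near the flow lines $\gamma_{x_i}$ and $\gamma_0$, none of which is affected; a general isotopy reduces to such moves together with a triangle count against an isotopic copy of $\bb$, whose associated cobordism is again $Y\times[0,1]$.

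For a \emph{stabilization} the new diagram is the connected sum $(\Sigma,\ba,\bb,z)\,{\#}\,(T^2,\alpha_0,\beta_0,z)$, where $(T^2,\alpha_0,\beta_0,z)$ is the genus-one Heegaard diagram of $S^3$ coming from the trivial open book $(D^2,\text{id})$, $\alpha_0\cap\beta_0=\{c\}$, and $\Gamma(\xx)=\xx\times c$. I would pick a self-indexing Morse function $f'$ adapted to the stabilized diagram that agrees with the original $f$ outside a ball $B$ containing the short flow line $\gamma_c$ joining the two newly created critical points (of index $1$ and $2$) through $c$; then the vector field $w_{\xx\times c}$ of Subsection~\ref{Defgr} can be chosen to agree with $w_{\xx}$ outside $B$ and, on $B$, to be the standard local model $(x,-y,2x^2+2y^2-1)$ attached to $\gamma_c$, which matches $w_{\xx}$ on $\partial B$. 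A Pontryagin--Thom computation inside $B$ — in the same spirit as the identification of $v_{\xx}$ with the Reeb vector field of $(D^2,\text{id})$ in the proof of Theorem~\ref{mainThm}(b) — shows that this local change does not alter the framed cobordism class, so $\widetilde{\text{gr}}(\xx\times c)=\widetilde{\text{gr}}(\xx)$. Equivalently, one checks that $\widetilde{\text{gr}}$ is natural under connected sums of Heegaard diagrams and that $\widetilde{\text{gr}}(c)=[\xi_{std}]\in\PP(S^3)$ by Theorem~\ref{mainThm}(b), whence $\widetilde{\text{gr}}(\xx\times c)=\widetilde{\text{gr}}(\xx)\,{\#}\,[\xi_{std}]=\widetilde{\text{gr}}(\xx)$.

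The step I expect to be the main obstacle is the stabilization case: one must check that the prescribed modification of the gradient-like field inside the ball $B$ carries no relative Hopf invariant — equivalently, that connect-summing a $2$-plane field with $(S^3,\xi_{std})$ is the identity operation and is faithfully implemented by the construction of Subsection~\ref{Defgr} — which calls for a careful, though elementary, analysis of framed cobordism in a ball with prescribed boundary data. By comparison, identifying the filled triangle cobordism with $Y\times[0,1]$ in the handleslide and isotopy cases, and characterizing $\sim_{Y\times[0,1]}$ as homotopy, should be routine.
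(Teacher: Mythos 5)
Your decomposition into isotopies, handleslides, and stabilizations is the paper's, and two of the three cases track the paper's proof closely. For handleslides you invoke Theorem~\ref{CobThm} for the triple $(\Sigma,\ba,\bb,\bb')$ whose filling is the product cobordism, and then observe that for $W=Y\times[0,1]$ the relation $\sim_W$ collapses to homotopy of oriented $2$-plane fields via the path $\xi_t=T(Y\times\{t\})\cap J\,T(Y\times\{t\})$; this is exactly the paper's handleslide argument (the converse direction of your characterization of $\sim_{Y\times[0,1]}$ is correct but not needed). For stabilization, your local analysis in a ball $B$ around the new index~$1$/index~$2$ pair --- checking that the standard modification along $\gamma_c$ agrees with $w_{\xx}$ on $\partial B$ and contributes no relative Hopf invariant --- is the same computation the paper carries out in $S^3\setminus B_R$, viewed as a small ball on which $w_{\xx}$ is nearly constant; you correctly isolate this as the one nontrivial check.

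The isotopy case is where you diverge, and where there is a genuine gap. The chain map $\Gamma$ of~\cite{OSz2} for an isotopy is the continuation map counting Maslov index $0$ disks with dynamic boundary conditions, not a triangle map, so ``reducing to a triangle count against an isotopic copy of $\bb$'' silently replaces the map whose grading behaviour the theorem asserts; to make this substitution you would have to show that the two chain maps send a given generator into the \emph{same} graded piece of the target, which does not follow from their being chain homotopic. More importantly, the delicate sub-case is a finger move that \emph{destroys} the pair of intersection points, one of which is the coordinate $x_1$ of $\xx$: there is then no nearby generator of the new diagram to compare with, and the paper handles this by composing the index-$0$ disk realizing a term $\xx'$ of $\Gamma(\xx)$ with the index-$0$ disk obtained by re-introducing the finger, producing an index-$0$ Whitney disk from $x_1$ to $x_1''$ back in the original diagram, and then quoting Theorem~\ref{ThmRel} to conclude $\widetilde{\text{gr}}(\xx)=\widetilde{\text{gr}}(\xx'')=\widetilde{\text{gr}}(\xx')$. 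Your sketch does not address this sub-case, and it is not a formal consequence of the cobordism theorem.
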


\begin{rmk}Theorem \ref{inv} gives the invariance we wanted and implies that the following decomposition is independent of the Heegaard diagram.
$$
 \widehat{HF}(Y;\fs)=\bigoplus_{\rho\in\mathcal{P}(Y,\fs)} \widehat{HF}_{\rho}(Y;\fs),
$$
\end{rmk}

To prove Theorem \ref{inv}, we will consider each type of Heegaard move at a time.

\subsection{Isotopies}
Let $(\Sigma,\ba,\bb,z)$ be a Heegaard diagram for $Y$ and let $\ba'$ be given by moving $\alpha_1$ to $\alpha_1'$ by a Hamiltonian isotopy without passing through $z$. Then there is a continuation map $\Gamma:~\widehat{CF}(\Sigma,\ba,\bb,z)\rightarrow\widehat{CF}(\Sigma,\ba',\bb,z)$ defined by counting Maslov index 0 holomorphic disks with dynamic boundary conditions, as defined in \cite{OSz2}. If this isotopy does not create or destroy intersections between $\alpha$ and $\beta$ curves, then it corresponds to isotoping the Morse function without introducing or removing any critical point. In this case it is clear that $\Gamma$ is an isomorphism and that it preserves the absolute grading.

\begin{center}\begin{figure}[ht]
    \begin{overpic}[scale=0.7]{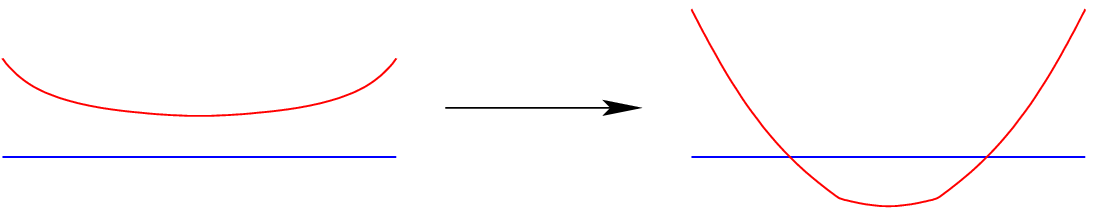}
    \end{overpic}
    \caption{}
    \label{fing}
\end{figure}\end{center}

A finger move is a Hamiltonian isotopy that creates a canceling pair of intersections, as shown in Figure \ref{fing}. We only need to show that $\Gamma$ is invariant when the isotopy introduces or eliminates one finger move and the general isotopy invariance follows from that. First assume that $\alpha_1'$ is obtained from $\alpha_1$ by introducing one finger move. Let $\xx=(x_1,\dots,x_g)\in \TT_{\ba}\cap\TT_{\bb}$, where $x_i\in\alpha_i\cap\beta_{\sigma(i)}$, for some permutation $\sigma$. Then $x_1$ is moved to a point $x_1'\in\alpha_1'\cap\beta_{\sigma(1)}$. We note that $x_1'$ is never one of the two new intersection points. It is easy to see an index 0 holomorphic disk from $x_1$ to $x_1'$, which is actually just a flow line along $\beta_{\sigma(1)}$. So if we take $\xx'=(x_1',x_2,\dots,x_g)$, then $\xx'$ is one of the terms in $\Gamma(\xx)$. It is easy to see that $\widetilde{\text{gr}}(\xx)=\widetilde{\text{gr}}(\xx')$. Therefore $\Gamma$ preserves the absolute grading. Now we assume that $\alpha_1'$ is obtained from $\alpha_1$ by eliminating a finger move. It remains to see what happens when $x_1$ is one of the two points that disappears. So we assume that $x_1$ is one of those two points, such that $\xx=
(x_1,\dots,x_g)\in\widehat{CF}(\Sigma,\ba,\bb,z)$. If $\Gamma(\xx)=0$, then there is nothing to prove. Assume that $\Gamma(\xx)\neq 0$. So we can take a term $\xx'$ in $\Gamma(\xx)$. Then since we only isotoped $\alpha_1$, none of the points $x_i$, for $i>1$, have moved. So we can write $\xx'= (x_1',x_2,\dots,x_g)$, where $x_1'\in \alpha_1'\cap\beta_{\sigma(1)}$. That means that there exists a Maslov index 0 holomorphic disk $\varphi$ from $x_1$ to $x_1'$. Now undoing this isotopy and introducing the finger move again, $x_1'$ corresponds to an intersection $x_1''\in \alpha_1\cap \beta_{\sigma(1)}$ and there is a Maslov index zero holomorphic disk $\psi$ from $x_1'$ to $x_1''$. We now observe that the composition $\varphi *\psi$ is homotopic to a Whitney disk from $x_1$ to $x_1''$ with stationary boundary conditions, i.e. there exists a Whitney disk from $x_1$ to $x_1''$ with its boundary mapping to $\alpha_1\cup\beta_{\sigma(1)}$. Therefore there is an index zero Whitney disk from $x_1$ to $x_1''$. So, since the absolute grading refines the relative grading in $\widehat{CF}(\Sigma,\ba,\bb,z)$, it follows that $\widetilde{\text{gr}}(\xx)=\widetilde{\text{gr}}(\xx'')$, where $\xx''=(x_1'',x_2,\dots,x_g)$, and hence $\widetilde{\text{gr}}(\xx)=\widetilde{\text{gr}}(\xx')$. That implies that $\Gamma$ preserves the absolute grading when a finger move is undone.

\subsection{Handleslides}

Let $(\Sigma,\ba,\bb,z)$ be a Heegaard diagram for $Y$ and let $\beta_1'$ be the closed curve obtained by handlesliding $\beta_1$ over $\beta_2$. Now we define $\bb'=(\beta_1',\beta_2,\dots,\beta_g)$. This handleslide gives rise to a trivial cobordism $W=Y\times[0,1]$, which can also be obtained from the Heegaard triple diagram $(\Sigma,\ba,\bb,\bb')$ by attaching $g$ copies of $S^1\times D^3$, as explained in~\cite{OSz2}. Let $F_W: \widehat{CF}(\Sigma,\ba,\bb,z)\rightarrow\widehat{CF}(\Sigma,\ba,\bb',z)$ be the induced chain map. Then, it follows from Theorem~\ref{mainThm}(c) that $\widetilde{\text{gr}}(\xx)\sim_W~\widetilde{\text{gr}}(F_W(\xx))$. That means that there exists an almost-complex structure $J$ on $W$ such that $[T(Y\times\{0\})\cap J(T(Y\times\{0\}))]=\widetilde{\text{gr}}(\xx)$ and $[T(Y\times\{1\})\cap J(T(Y\times\{1\}))]=\widetilde{\text{gr}}(F_W(\xx))$. Now let $\xi_t=T(Y\times\{t\})\cap J(T(Y\times\{t\}))$, for $0\le t\le 1$. Under the canonical identification $Y\simeq Y\times\{t\}$, $\{\xi_t\}$ gives a homotopy between
$T(Y\times\{0\})\cap J(T(Y\times\{0\}))$ and $T(Y\times\{1\})\cap J(T(Y\times\{1\}))$. Therefore $\widetilde{\text{gr}}(\xx)=\widetilde{\text{gr}}(F_W(\xx))$.

\subsection{Stabilization}
Given a Heegaard diagram $(\Sigma,\ba,\bb,z)$ we stabilize it by taking the connected sum with a two-torus and introducing a new pair of $\alpha$ and $\beta$ curves in this two-torus that intersect at exactly one point. This is equivalent to taking the connect sum of $Y$ with an $S^3$, that is endowed with the standard genus one Heegaard decomposition. We can write $(\Sigma',\ba',\bb',z')$ for the Heegaard diagram of the stabilization. Here $\Sigma'=\Sigma\# E$, for a two-torus $E$, $\ba'=(\alpha_1,\dots,\alpha_g,\alpha_{g+1})$, $\bb'=(\beta_1,\dots,\beta_g,\beta_{g+1})$ and $z'\in\Sigma'$ is naturally associated with $z$, assuming that the connected sum removes a ball from $\Sigma$ that does not contain $z$. Let $w$ be the unique point in $\alpha_{g+1}\cap\beta_{g+1}$.
It is clear that $\Gamma:\widehat{CF}(\Sigma,\ba,\bb,z)\rightarrow\widehat{CF}(\Sigma',\ba',\bb',z')$, which takes $(x_1,\dots,x_g)$ to $(x_1,\dots,x_g,w)$, is an isomorphism. Is is also shown in \cite{OSz2} that this map gives rise to an isomorphism in homology.
We need to show that the absolute grading is invariant under $\Gamma$. Let $\xx=(x_1,\dots,x_g)\in\widehat{CF}(\Sigma,\ba,\bb,z)$. In the definition of $\widetilde{\text{gr}}(\xx)$ we modify a gradient-like vector field in neighborhoods of the flow lines $\gamma_{x_i}$ and $\gamma_0$ to get a nonzero vector field. We can write $$Y\# S^3=(Y\setminus B_{\varepsilon})\cup_{\phi}(S^3\setminus B_{R}),$$
where $B_{\varepsilon}$ is a small ball, $B_{R}$ is a large ball and $\phi:\partial B_{\varepsilon}\rightarrow\partial B_R$ is a diffeomorphism. We can see the same neighborhoods $N(\gamma_{x_i})\subset Y$ and $N(\gamma_0)\subset Y$ in $Y\# S^3$. Now we take a gradient-like vector field $v$ for a Morse function compatible with $(\Sigma',\ba',\bb,z')$. The definition of $\widetilde{\text{gr}}(\Gamma(\xx))$ clearly implies that the vector field $w_{\Gamma(\xx)}$ is homotopic to $w_{\xx}$ in $Y\setminus B_{\varepsilon}$. So it remains to show that $w_{\xx}$ and $w_{\Gamma(\xx)}$ are also homotopic in $S^3\setminus B_{R}$. We can think of $S^3\setminus B_R$ as a small ball $B_{\delta}$ in $R^3$, where $w_{\xx}$ is very close to being constant with respect to the standard trivialization. We note that $v$ has only two critical points in $B_{\delta}$. It is easy to homotope $w_{\xx}$ in a neighborhood of $B_{\delta}$ so that it coincides with $v$ on $\partial B_{\delta}$. It is also easy to see that after we modify $v$ in $N(\gamma_{x_{g+1}})$, the vector field we obtain is homotopic to $w_{\xx}$ in $B_{\delta}$. That concludes the proof of Theorem~\ref{inv}.

%\nocite{Tur}

\bibliography{mybib}

\begin{thebibliography}{10}

\bibitem{CGH}
Vicent Colin, Paolo Ghiggini, and Ko~Honda.
\newblock Embedded contact homology and open book decompositions, 2010.
\newblock arXiv:1008.2734.

\bibitem{CGirH}
Vincent Colin, Emmanuel Giroux, and Ko~Honda.
\newblock On the coarse classification of tight contact structures.
\newblock In {\em Topology and geometry of manifolds ({A}thens, {GA}, 2001)},
  volume~71 of {\em Proc. Sympos. Pure Math.}, pages 109--120. Amer. Math.
  Soc., Providence, RI, 2003.

\bibitem{Gom}
Robert~E. Gompf.
\newblock Handlebody construction of {S}tein surfaces.
\newblock {\em Ann. of Math. (2)}, 148(2):619--693, 1998.

\bibitem{HKM}
Ko~Honda, William~H. Kazez, and Gordana Mati{\'c}.
\newblock On the contact class in {H}eegaard {F}loer homology.
\newblock {\em J. Differential Geom.}, 83(2):289--311, 2009.

\bibitem{Hut1}
Michael Hutchings.
\newblock An index inequality for embedded pseudoholomorphic curves in
  symplectizations.
\newblock {\em J. Eur. Math. Soc. (JEMS)}, 4(4):313--361, 2002.

\bibitem{Hut2}
Michael Hutchings.
\newblock The embedded contact homology index revisited.
\newblock In {\em New perspectives and challenges in symplectic field theory},
  volume~49 of {\em CRM Proc. Lecture Notes}, pages 263--297. Amer. Math. Soc.,
  Providence, RI, 2009.

\bibitem{Hut4}
Michael Hutchings.
\newblock Embedded contact homology and its applications, 2010.
\newblock arXiv:1003.3209.

\bibitem{Hut3}
Michael Hutchings and Clifford~Henry Taubes.
\newblock Gluing pseudoholomorphic curves along branched covered cylinders.
  {I}, 2007.

\bibitem{KM97}
Peter Kronheimer and Tomasz Mrowka.
\newblock Monopoles and contact structures.
\newblock {\em Invent. Math.}, 130(2):209--255, 1997.

\bibitem{KM}
Peter Kronheimer and Tomasz Mrowka.
\newblock {\em Monopoles and three-manifolds}, volume~10 of {\em New
  Mathematical Monographs}.
\newblock Cambridge University Press, Cambridge, 2007.

\bibitem{KLT}
Cagatay Kutluhan, Lee Yi-Jen, and Clifford~Henry Taubes.
\newblock H{F}={HM} {I} : Heegaard {F}loer homology and {S}eiberg--{W}itten
  {F}loer homology, 2010.
\newblock arXiv:1007.1979.

\bibitem{Lip}
Robert Lipshitz.
\newblock A cylindrical reformulation of {H}eegaard {F}loer homology.
\newblock {\em Geom. Topol.}, 10:955--1097, 2006.

\bibitem{Lis}
Paolo Lisca.
\newblock On fillable contact structures up to homotopy.
\newblock {\em Proc. Amer. Math. Soc.}, 129(11):3437--3444 (electronic), 2001.

\bibitem{OSz6}
Peter Ozsv{\'a}th and Zolt{\'a}n Szab{\'o}.
\newblock Holomorphic disks and genus bounds.
\newblock {\em Geom. Topol.}, 8:311--334, 2004.

\bibitem{OSz5}
Peter Ozsv{\'a}th and Zolt{\'a}n Szab{\'o}.
\newblock Holomorphic disks and three-manifold invariants: properties and
  applications.
\newblock {\em Ann. of Math. (2)}, 159(3):1159--1245, 2004.

\bibitem{OSz2}
Peter Ozsv{\'a}th and Zolt{\'a}n Szab{\'o}.
\newblock Holomorphic disks and topological invariants for closed
  three-manifolds.
\newblock {\em Ann. of Math. (2)}, 159(3):1027--1158, 2004.

\bibitem{OSz1}
Peter Ozsv{\'a}th and Zolt{\'a}n Szab{\'o}.
\newblock Heegaard {F}loer homology and contact structures.
\newblock {\em Duke Math. J.}, 129(1):39--61, 2005.

\bibitem{OSz4}
Peter Ozsv{\'a}th and Zolt{\'a}n Szab{\'o}.
\newblock On knot {F}loer homology and lens space surgeries.
\newblock {\em Topology}, 44(6):1281--1300, 2005.

\bibitem{OSz3}
Peter Ozsv{\'a}th and Zolt{\'a}n Szab{\'o}.
\newblock Holomorphic triangles and invariants for smooth four-manifolds.
\newblock {\em Adv. Math.}, 202(2):326--400, 2006.

\bibitem{R}
Jacob Rasmussen.
\newblock Floer homology and knot complements, 2003.
\newblock arXiv:math/0306378.

\bibitem{T}
Clifford~Henry Taubes.
\newblock Embedded contact homology and {S}eiberg-{W}itten {F}loer cohomology
  {I}--{V}.
\newblock {\em Geom. Topol.}, 14(5):2497--3000, 2010.

\end{thebibliography}
\bibliographystyle{plain}
\end{document}